\newtheorem{example}{Example}
\crefname{hypothesis}{Hypothesis}{Hypotheses}
\title{Convergence analysis of SPH method on irregular particle distributions for the Poisson equation \thanks{Submitted to the editors DATE.
\funding{This work is supported by the CAS AMSS-PolyU Joint Laboratory of Applied Mathematics (No. JLFS/P-501/24).
The first author was partially supported by the Hong Kong Research Grants Council RFS grant RFS2021-5S03, GRF grants 15302122 and 15305624.
The second author was partially supported by the Hong Kong Polytechnic University Postdoctoral Research Fund 1-W30N.
}}}
\author{ZHONGHUA QIAO \thanks{Department of Applied Mathematics, The Hong Kong Polytechnic University,  Hung Hom, Kowloon, Hong Kong
  (\email{zhonghua.qiao@polyu.edu.hk}).}
\and YIFAN WEI \thanks{Corresponding author. Department of Applied Mathematics, The Hong Kong Polytechnic University,  Hung Hom, Kowloon, Hong Kong
  (\email{yi-fan.wei@polyu.edu.hk}).}}
\begin{document}

\maketitle

\begin{abstract}
The numerical accuracy of particle-based approximations in Smoothed Particle Hydrodynamics (SPH) is significantly affected by the spatial uniformity of particle distributions, especially for second-order derivatives. This study aims to enhance the accuracy of SPH method and analyze its convergence with irregular particle distributions.
By establishing regularity conditions for particle distributions, we ensure that the local truncation error of traditional SPH formulations, including first and second derivatives, achieves second-order accuracy.
Our proposed method, the volume reconstruction SPH method, guarantees these regularity  conditions while preserving the discrete maximum principle. Benefiting from the discrete maximum principle, we conduct a rigorous global error analysis in the $L^\infty$-norm for the Poisson equation with variable coefficients, achieving second-order convergence. Numerical examples are presented to validate the theoretical findings.
\end{abstract}

\begin{keywords}
Error estimate;  variable coefficient Poisson; second-order convergence; discrete maximum principle; irregular particle distribution.
\end{keywords}

\begin{MSCcodes}
 35J05, 65M12, 65M15, 65M75, 76M28
\end{MSCcodes}

\section{Introduction}
The Smoothed Particle Hydrodynamics (SPH) method, originally introduced by Lucy \cite{lucy1977numerical} and independently by Gingold and Monaghan \cite{gingold1977smoothed}, is a mesh-free Lagrangian particle approach with significant potential for addressing deformation and free-interface problems.
Owing to its Lagrangian characteristics, SPH inherently tracks free surfaces or interfaces in complex fluid simulations \cite{lind2020review}. 
SPH has also demonstrated considerable potential in handling multiphase flows with high density ratios \cite{lind2016incompressible}. 
In solid mechanics, the unique properties of solids\textemdash such as large deformations and diverse material constitutive laws, including damage and crack propagation\textemdash align well with SPH formulations \cite{meng2021advances}. The particle-based modeling in SPH avoids the common issue of mesh distortion found in mesh-based methods, making it naturally suited for problems involving large deformations. 
SPH more naturally ensures the conservation of mass and momentum \cite{feng2023energy,zhu2022energy}. 
Additionally, the computational efficiency of SPH has been significantly enhanced through the utilization of GPU acceleration \cite{chow2018incompressible}. 
These advantages have contributed to the widespread adoption of SPH in fields like astrophysics, engineering, and computer graphics.

On the other hand, the traditional SPH method still suffers from the inadequacy of low accuracy when dealing with irregular particle distributions.
Specifically, the error in SPH arises from two primary sources: smoothing error and discretization error \cite{quinlan2006truncation} (or integration error \cite{lind2016high,vacondio2021grand}). 
Smoothing error arises from the kernel-based smoothing operation \cite{LITVINOV2015394}.
The convergence accuracy of this error depends on the chosen kernel function. 
Specifically, Lind et al. \cite{lind2016high} achieved enhanced accuracy by employing high-order Gaussian kernels on regular particle distributions for transient flow simulations. Furthermore, Wang et al. \cite{wang2024fourth} present a new  4th-order truncated Laguerre-Gauss kernel yields substantially smaller relaxation residues than both Wendland and Laguerre-Wendland kernels. 
When a positive smoothing function is used to ensure a meaningful or stable representation of physical phenomena, the highest level of precision attainable for approximating functions is of the second order \cite{liu2010smoothed}. 
The discretization error originates from the particle-based discretization of the domain.
This error depends mainly on particle distribution regularity under the  traditional definition of particle volume. Indeed, Quinlan et al. \cite{quinlan2006truncation} demonstrated that with a constant ratio of particle spacing to smoothing length, non-uniform particle distributions can lead to divergent behavior.
Specific physical governing equations can promote relatively uniform particle distributions, thereby enhancing the convergence accuracy of the SPH method. For instance, Litvinov et al. \cite{LITVINOV2015394} demonstrated that a constant pressure field can transform randomly distributed particles into a configuration satisfying the partition of unity condition. This resulting particle distribution exhibits enhanced uniformity, which is essential for achieving zeroth-order consistency in SPH approximations.
However, in other important physical models, the particle distribution in the SPH method may exhibit irregularities. For instance, unphysical void regions may occur in fluids with high Reynolds numbers\cite{lee2008comparisons}. Additionally, tensile instability can lead to unphysical particle motion, resulting in the formation of particle clusters \cite{bagheri2024review}. Therefore, there is a need to improve methods for effectively handling irregular particle distributions.

Improving the consistency of the SPH approximation for irregular particles has naturally received significant attention and has made substantial progress.
A popular strategy is to develop improved SPH schemes using Taylor series expansions of functions and their derivatives. Examples include the corrective smoothed particle method (CSPM) by Chen and Beraun \cite{chen_corrective_1999}, and the finite particle method (FPM) by Liu et al. \cite{liu_restoring_2006,liu_modeling_2005}.
The possible numerical errors existent in a lower order derivative in CSPH may carry over to higher order derivatives, which does not happen in FPM.
Regarding the restoration of the SPH approximate consistency, the FPM exhibits first-order consistency if only first-order derivatives are retained in the Taylor series expansion. 
Note that both CSPM and FPM require solving systems of linear equations with potentially ill-conditioned corrective matrices, particularly when handling highly irregular particle distributions \cite{zhang2018meshfree}. This may lead to numerical instabilities or premature termination of simulations.  As an improvement, Zhang et al. \cite{zhang2018decoupled} proposed the decoupled finite particle method (DFPM), which eliminates the need for solving matrix equations while maintaining superior accuracy compared to traditional SPH methods. 
Recent advances have further enhanced consistency in particle methods through alternative approaches. Nasar et al. \cite{nasar2021high-2} developed a high-order consistency correction scheme based on modified SPH and FPM, significantly improving the order of discretization error. Meanwhile, Zhang et al. \cite{zhang2025towards} introduced the reverse KGC formulation, which achieves high-order consistency while maintaining conservation properties.
In addition, Yang et al. \cite{yang_improved_2024}  reformulated the fundamental equations of the traditional FPM on the basis of matrix decomposition, and developed the generalized finite particle method (GFPM) which can be theoretically proven to be always stable.
The GFPM, however, requires additional constraints on the model or the utilization of other numerical methods to obtain the value of second derivatives.
Based on Taylor series expansion, Fatehi et al. \cite{fatehi2011error} investigated a variety of derivative approximation schemes for regular and irregular particle distributions. They introduced a novel SPH scheme aimed at approximating second derivatives while maintaining the first-order consistency property.
Recently, Lian et al. \cite{lian2021general} developed a  general
SPH scheme for second derivatives to improve the accuracy of solving anisotropic diffusion unsaturated permeability problems.
We emphasize that the first-order consistent SPH approximation can only guarantee  first-order accuracy in approximating first derivatives. We will discuss this in Section \ref{Sec2}, using FPM as an example. Given that the smoothing error is of second-order, it becomes important to investigate methods to attain second-order accuracy in particle approximation with minimal resources, particularly for second derivatives.

Despite practical advancements in improving the consistency of the SPH approximation, there remains a lack of rigorous mathematical analysis for convergence under realistic conditions, particularly for irregular particle distributions.
In fact, quantifying the mathematical convergence for SPH is seen as a major challenge \cite{vacondio2021grand}. An important early contribution was made by Musa and Vila \cite{ben2000convergence}, who demonstrated the convergence of their SPH scheme for scalar nonlinear conservation laws in one dimension.
To further deepen the theoretical understanding of SPH, Du et al. \cite{du2020mathematics} conducted a mathematical analysis of asymptotically compatible discretization for the non-local Stokes equation using the Fourier spectral method.
Lee et al. \cite{lee2019asymptotically} presented nonlocal models for linear advection in one spatial dimension, along with their particle-based numerical discretizations. They pointed out that extending the asymptotically compatible nature to two-dimensional irregular particle distributions poses a significant challenge.
Indeed, analyzing the convergence of the SPH method for irregular two-dimensional particle distributions is quite challenging. Enhancing  the consistency of SPH approximations frequently introduces additional complexity in SPH formulations, making convergence analysis more challenging.
The existing high-accuracy SPH methods based on Taylor series expansion, such as FPM and CSPH, implicitly rely on information from low-order derivatives and function values in their approximation formats for high-order derivatives. 
While these methods are effective for solving evolution equations, this fundamental characteristic renders them poorly suited for boundary-value problems like the Poisson equation, where both the function values and first derivatives are unknown.   
The accurate numerical solution of such Poisson-type equations holds substantial significance in fluid simulation. Addressing this challenge, Nasar et al. \cite{nasar2021high} made significant advances by developing a high-order accurate Eulerian incompressible SPH, which provides rigorous treatment of the pressure Poisson equation  while precisely enforcing both the velocity Dirichlet boundary conditions and pressure Neumann boundary conditions. 
To address the existing gap in the mathematical analysis of SPH methods for irregular particle distributions, the present study focuses on analyzing the convergence of the variable coefficient Poisson problem across non-uniform particle distributions \cite{min2006supra}:
\begin{align}\label{eq1.1}
 \mathcal{L}u (\mathbf{x}):=\nabla\cdot(a(\mathbf{x}) \nabla u (\mathbf{x}))=f(\mathbf{x}), \quad \mathbf{x}\in \Omega,
\end{align}
subject to  the Dirichlet boundary condition
\begin{align}\label{Dirichlet}
  u(\mathbf{x})&=0, \quad\forall \mathbf{x}\in\partial\Omega.
\end{align}
Here, $u (\mathbf{x})$ is the unknown function,  and the variable coefficient $a$ is positive and bounded from below by $a_{\min}>0$. $\Omega=[0,l]^d$ is a hypercube domain in $\mathbb{R}^d$.

In this study, we introduce a refined SPH technique, which we call the Volume Reconstruction SPH (VRSPH) method, to solve equation \eqref{eq1.1} and we examine the convergence of the numerical approach.
The primary innovation of this research involves the application of volume reconstruction to ensure the regularity conditions and uphold the discrete maximum principle. The implementation of these conditions ensures the second-order accuracy in the approximation of the Laplace operator, while the adherence to the discrete maximum principle lays a robust groundwork for the convergence analysis. The regularity conditions we discuss are intricately linked to the discrete consistency conditions outlined in \cite{liu_restoring_2006}, which are essential for managing irregular particle distributions.
The development of this methodological framework offers novel tools and perspectives for the analysis of steady-state partial differential equations that utilize the Laplace operator.
We summarize our main contributions as follows:
\begin{enumerate}
\item We show that traditional SPH can reach second-order accuracy under certain regularity conditions of the particle distribution.
\item The VRSPH method is introduced to handle irregular particle distributions, with the goal of improving the accuracy of the second derivative to second order.
\item We present the first $L^\infty$-error estimation for the SPH method on irregular particle distributions in solving the variable coefficient Poisson equation, successfully attaining second-order accuracy.
\end{enumerate}

The rest of this paper is organized as follows.
Section \ref{Sec2} discusses the fundamentals of traditional SPH and the FPM method. Section \ref{Sec3} details the regularity conditions necessary for particle distributions to achieve second-order approximation accuracy. The VRSPH method is introduced in Section \ref{Sec4}. Section \ref{Sec5} provides an error estimation for the VRSPH method when applied to the variable coefficient Poisson equation. Section \ref{Sec6} presents numerical experiments that validate the theoretical claims. The paper concludes with Section \ref{Sec7}.

\section{Traditional SPH and FPM}\label{Sec2}
In this part, we will offer a concise introduction to the traditional SPH technique and the widely adopted variant called the FPM. Both the CSPH method and the KGF method share foundational principles with FPM. However, as CSPH and KGF do not surpass FPM in accuracy, they will not be the central subjects of our discussion here.
\subsection{Brief review of traditional SPH}
By the definition of  Dirac delta function $\delta$, there holds
\begin{equation}\label{eq1}
  f(\mathbf{x})=\int_\Omega f(\mathbf{x}')\delta(\mathbf{x}-\mathbf{x}')\textrm{d} \mathbf{x}'.
\end{equation}
In  SPH, $\delta(\mathbf{x}-\mathbf{x}')$ is replaced by a smoothing function $W(\mathbf{x}-\mathbf{x}',h)$ which usually satisfies the following conditions
\begin{itemize}
    \item Normalization condition:$\int_\Omega W(\mathbf{x}-\mathbf{x}',h)\textrm{d}\mathbf{x}'=1$;
    \item Symmetric property: $W(\mathbf{x}-\mathbf{x}',h)=W(\mathbf{x}'-\mathbf{x},h)$;
    \item Compact condition: $W(\mathbf{x}-\mathbf{x}',h)=0 \text{ where } \|\mathbf{x}-\mathbf{x}'\|\geq h$;
    \item Decay condition: $ W(\mathbf{x},h)\leq W(\mathbf{x}',h)$ where $\|\mathbf{x}\|\geq \|\mathbf{x}'\|$.
\end{itemize}
Here, $\|\cdot\|$ represents the Euclidean norm and $h$ is the influence radius of kernel.  $W(\mathbf{x},h)$ affects only a specific support domain, similar to the Dirac delta function. Then the \textit{Kernel approximation} or \textit{Integral approximation} of $f(\mathbf{x})$ becomes
\begin{equation}\label{eq2}
f_I(\mathbf{x})=  \int_\Omega f(\mathbf{x}')W(\mathbf{x}-\mathbf{x}',h)d\mathbf{x}'\approx f(\mathbf{x}).
\end{equation}
By replacing $f$ in \eqref{eq2} with its corresponding derivatives and applying the method of integration by parts to transfer the derivatives onto the smoothing function $W$, we can obtain the Kernel approximation of the spatial derivative as follows
\begin{equation}\label{eq3}
[D^\alpha f]_I(\mathbf{x})=  -\int_\Omega f(\mathbf{x}')D^\alpha_{\mathbf{x}'} W(\mathbf{x}-\mathbf{x}',h)d\mathbf{x}',
\end{equation}
where $\alpha$ is a multi-index.

In discrete space, the integration operation of \eqref{eq2} can be replaced by summation.
Denote $f_j := f(\mathbf{x}_j)$, $\mathbf{x}_{ij}:=\mathbf{x}_i-\mathbf{x}_j$ and $W_{ij} := W(\mathbf{x}_i-\mathbf{x}_j,h)$.
Then, the \textit{Particle approximation} of the traditional SPH can be expressed in the following form:
$$ \langle f_i\rangle =\sum_j  v_j f_jW_{ij}\approx f(\mathbf{x}_i).$$
Here, $i$ denotes the interpolating particle and $j$ refers to the neighbouring particles within the support. $v_j$ is the volume of the particle $j$ .
Similarly,  the Particle approximation of derivatives can be given as
  $$ \langle  D^\alpha f_{i}\rangle = -\sum_j v_j f_j  D^\alpha_{\mathbf{x}'} W_{ij}.$$
Given that $R=\|\mathbf{x}_{ij}\|/h$, the cubic spline kernel can be expressed as follows
\begin{align}\label{cubic-spline}
W_{ij}=\sigma_d\frac{1}{h^{d}}\times
\left\{
\begin{array}{ll}
1-6R^2+6R^3, & \hbox{$0\leq R <0.5;$} \\
2(1-R)^3, & \hbox{$0.5\leq R < 1;$} \\
0, & \hbox{$R\geq 1,$}
\end{array}
\right.
\end{align}
which is a typical choice for the smoothing kernel.
The kernel normalization factors $\sigma_d$ for the respective dimensions $d = 1, 2, 3$ are $\sigma_d= 4/3, 40/(7\pi)$ and $ 8/\pi$.
In general, we can express the kernel function (including the cubic spline kernel) as: $$W_{ij}=\sigma_d\frac{1}{h^{d}}\hat{W}(R),$$
 where $\hat{W}(R)$ and $\sigma_d$ vary with the choice of kernel function.
It can be checked that
\begin{align}\label{estimateofnablaW}
\nabla_{\mathbf{x}_i} W_{ij}=\frac{\sigma_d}{h^{d}}\nabla_{\mathbf{x}_i}\hat{W}(R)=\frac{\sigma_d}{h^{d+1}}\partial_R\hat{W}  \frac{\mathbf{x}_{ij}}{\|\mathbf{x}_{ij}\|}.
\end{align}
By the use of \eqref{estimateofnablaW} and denote $\nabla_i  W_{ij}:=\nabla_{\mathbf{x}_i}  W_{ij}$, we can obtain the following two lemmas.
\begin{lemma}\label{lem1}
\begin{align}
    \|\nabla_i  W_{ij}\|_\infty\leq Ch^{-d-1}.
\end{align}
Here, $C$ is only dependent on the kernel function $\hat{W}(R)$ and the dimension parameter $d$.
\end{lemma}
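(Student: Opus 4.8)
The plan is to read the bound directly off the closed-form expression \eqref{estimateofnablaW} for the kernel gradient, reducing everything to a one-variable estimate on the profile $\hat{W}$. From
$$\nabla_i W_{ij}=\frac{\sigma_d}{h^{d+1}}\,\partial_R\hat{W}(R)\,\frac{\mathbf{x}_{ij}}{\|\mathbf{x}_{ij}\|},\qquad R=\frac{\|\mathbf{x}_{ij}\|}{h},$$
I would observe that $\mathbf{x}_{ij}/\|\mathbf{x}_{ij}\|$ is a unit vector, so each of its components is bounded in absolute value by $1$; hence the $\ell^\infty$-norm of the whole vector is controlled by $|\partial_R\hat{W}(R)|$ times the scalar prefactor $\sigma_d h^{-d-1}$.

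Next I would bound $|\partial_R\hat{W}(R)|$ uniformly in $R$. By the compact support condition, $\hat{W}(R)=0$ for $R\ge 1$, so it suffices to consider $R\in[0,1]$, on which $\partial_R\hat{W}$ is continuous (indeed piecewise polynomial for the cubic spline, with $\partial_R\hat{W}(R)=-12R+18R^2$ on $[0,0.5)$ and $-6(1-R)^2$ on $[0.5,1)$), and therefore attains a finite maximum $M:=\sup_{R\in[0,1]}|\partial_R\hat{W}(R)|$ depending only on the chosen kernel profile. Setting $C:=\sigma_d M$ then gives $\|\nabla_i W_{ij}\|_\infty\le C h^{-d-1}$, with $C$ depending only on $\hat{W}$ and $d$ through $\sigma_d$.

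The only point that needs a word of care is the coincident-particle case $\mathbf{x}_{ij}=\mathbf{0}$, where the direction vector $\mathbf{x}_{ij}/\|\mathbf{x}_{ij}\|$ is undefined; there $\nabla_i W_{ij}=\mathbf{0}$ by symmetry (equivalently $\partial_R\hat{W}(0)=0$ for the standard kernels), so the estimate holds trivially and the product $\partial_R\hat{W}(R)\,\mathbf{x}_{ij}/\|\mathbf{x}_{ij}\|$ extends continuously by zero across $R=0$. Beyond that there is no real obstacle: the argument is a direct consequence of the explicit scaling in \eqref{estimateofnablaW} together with boundedness of a continuous function on a compact interval.
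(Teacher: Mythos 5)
Your argument is correct and is essentially the same as the paper's, which simply reads the bound off the explicit scaling formula \eqref{estimateofnablaW} with the unit direction vector bounded by one and $|\partial_R\hat{W}|$ bounded by its supremum on the compact support. The extra remark about the coincident-particle case is a harmless refinement the paper leaves implicit.
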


\begin{lemma}\label{lem1-2}
$\nabla_iW_{ij}$ is parallel to $\mathbf{x}_{ij}$, namely
\begin{align}
 \nabla_iW_{ij}=\Big(\frac{\sigma_d}{h^{d+1}}\frac{\partial_R\hat{W}(R)}{\|\mathbf{x}_{ij}\|} \Big)\mathbf{x}_{ij}.
\end{align}
Furthermore, if  $W(\mathbf{x},h)$ satisfies the decay condition, then $\partial_R\hat{W}(R)\leq 0$ and
\begin{align}
  \mathbf{x}_{ij}\cdot \nabla_iW_{ij} \leq 0.
\end{align}
\end{lemma}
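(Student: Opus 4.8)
The plan is to obtain both assertions as immediate corollaries of the chain-rule identity \eqref{estimateofnablaW} established above, together with the monotonicity encoded in the decay condition. First I would recall that for any admissible kernel we write $W_{ij} = \sigma_d h^{-d}\hat{W}(R)$ with $R = \|\mathbf{x}_{ij}\|/h$, so that \eqref{estimateofnablaW} reads
$$\nabla_i W_{ij} = \frac{\sigma_d}{h^{d+1}}\,\partial_R\hat{W}(R)\,\frac{\mathbf{x}_{ij}}{\|\mathbf{x}_{ij}\|}.$$
Factoring out the scalar $\frac{\sigma_d}{h^{d+1}}\frac{\partial_R\hat{W}(R)}{\|\mathbf{x}_{ij}\|}$ gives exactly the stated representation $\nabla_i W_{ij} = \left(\frac{\sigma_d}{h^{d+1}}\frac{\partial_R\hat{W}(R)}{\|\mathbf{x}_{ij}\|}\right)\mathbf{x}_{ij}$, which shows $\nabla_i W_{ij}$ is parallel to $\mathbf{x}_{ij}$. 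The only bookkeeping point is that this is meaningful for $i\neq j$, where $\|\mathbf{x}_{ij}\|>0$; for $i=j$ the kernel attains its maximum at the origin, $\nabla_i W_{ii}=\mathbf{0}$, and every claim of the lemma holds trivially.

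Next I would translate the decay condition into a sign condition on $\partial_R\hat{W}$. Since $W(\mathbf{x},h) = \sigma_d h^{-d}\hat{W}(\|\mathbf{x}\|/h)$ with the prefactor $\sigma_d h^{-d}>0$, the hypothesis that $W(\mathbf{x},h)\leq W(\mathbf{x}',h)$ whenever $\|\mathbf{x}\|\geq\|\mathbf{x}'\|$ is equivalent to $\hat{W}$ being non-increasing on $[0,\infty)$. Hence $\partial_R\hat{W}(R)\leq 0$ at every point of differentiability; for the standard kernels used in practice, such as the cubic spline \eqref{cubic-spline}, this can also be checked directly piecewise.

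Finally I would compute the inner product from the parallel representation:
$$\mathbf{x}_{ij}\cdot\nabla_i W_{ij} = \frac{\sigma_d}{h^{d+1}}\frac{\partial_R\hat{W}(R)}{\|\mathbf{x}_{ij}\|}\,\|\mathbf{x}_{ij}\|^2 = \frac{\sigma_d}{h^{d+1}}\,\partial_R\hat{W}(R)\,\|\mathbf{x}_{ij}\|,$$
and since $\sigma_d,h,\|\mathbf{x}_{ij}\|\geq 0$ while $\partial_R\hat{W}(R)\leq 0$, the product is nonpositive. I do not expect any real obstacle here: the lemma is a direct consequence of \eqref{estimateofnablaW} plus the monotonicity of $\hat{W}$. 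The only places that warrant a word of care are the degenerate case $\mathbf{x}_{ij}=\mathbf{0}$ noted above, and—if one wants full rigour—observing that the decay condition should be read as a monotonicity property of $\hat{W}$, so that $\partial_R\hat{W}\leq 0$ holds wherever the derivative exists (and pointwise for the piecewise-polynomial kernels employed throughout).
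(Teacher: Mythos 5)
Your proposal is correct and follows essentially the same route as the paper, which derives the lemma directly from the identity \eqref{estimateofnablaW} together with the monotonicity of $\hat{W}$ implied by the decay condition. Your extra remarks on the degenerate case $i=j$ and on reading the decay condition as monotonicity of $\hat{W}$ are sensible refinements but do not change the argument.
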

The above-mentioned decay condition signifies that the value of a particle's smoothing function diminishes progressively with increasing distance from the particle. This principle is based on the physical notion that a particle in closer proximity should have a stronger impact on the particle under consideration. Fundamentally, the force of interaction between two particles weakens as their distance grows.

Considering the stability and accuracy of the calculation, the following traditional SPH formulations are generally used in practice:
 \begin{align}
  \text{Divergence} &&\langle \nabla\cdot  \mathbf{A}(\mathbf{x}_i)\rangle&=-\sum_jv_j\mathbf{A}_{ij}\cdot\nabla_iW_{ij},\\
  \text{Gradient} &&\langle\nabla f(\mathbf{x}_i)\rangle&=-\sum_j  v_jf_{ij} \nabla_iW_{ij} \label{gradientscheme},\\
  \text{Laplacian} &&\langle \Delta \mathbf{A}(\mathbf{x}_i)\rangle&= 2\sum_j v_j\mathbf{A}_{ij}\frac{ \mathbf{x}_{ij}\cdot \nabla_iW_{ij} }{\|\mathbf{x}_{ij}\|^2}\label{laplacescheme},\\
  \text{Morris operator}\cite{lind2020review,morris1997modeling} &&\langle \nabla\cdot( a(\mathbf{x}_i)\nabla \mathbf{A}(\mathbf{x}_i))\rangle&= \sum_j v_j\mathbf{A}_{ij}\frac{(a_i+a_j)\mathbf{x}_{ij}\cdot \nabla_iW_{ij} }{\|\mathbf{x}_{ij}\|^2}.\label{laplacescheme2}
\end{align}
Here, $\mathbf{A}_{ij}:=\mathbf{A}_{i}-\mathbf{A}_{j}$  and $f_{ij}:=f_{i}-f_{j}$.
\subsection{Approximating derivatives with FPM}
The FPM ensures $C^1$ particle consistency for interior particles with non-uniform distributions by retaining only the first order derivatives. In this context, $C^1$ consistency refers to the ability to reproduce or approximate a linear function. The fundamental concept behind the proof involves the Taylor expansion, which is briefly demonstrated in the following one-dimensional case.

For any function $f(x)\in C^\infty(\Omega)$, there exists a Taylor expansion as follows
\begin{align}\label{Tylor_linear}
f_{j}=f_i+ f_i' x_{ji}+\frac{1}{2!} f''(\xi_j) x_{ji}^2,
\end{align}
where $\xi_j$ is a point between $x_i$ and $x_j$.
Multiply both sides of equation \eqref{Tylor_linear} with kernel $v_jW_{ij}$ and $v_j\nabla_iW_{ij}$, respectively, and then sum over the index $j$ to get
\begin{align}\label{FPM2}
  &\left[\begin{array}{ll}
  \sum_j v_j  W_{ij}         &\sum_j v_j x_{ji} W_{ij} \\
  \sum_j v_j \nabla_iW_{ij} &\sum_j v_j x_{ji}\nabla_iW_{ij} \\
  \end{array}\right]
  \left[\begin{array}c {f_i}\\ {f_i'}  \end{array}\right]\nonumber\\
  = &\left[\begin{array}l \sum_jv_j f_j  W_{ij}\\ \sum_j v_j f_j  \nabla_iW_{ij} \end{array}\right]
-\frac{1}{2!}\left[\begin{array}l \sum_jv_j f''(\xi_j)  x_{ji}^2W_{ij}\\ \sum_j v_j f''(\xi_j)x_{ji}^2  \nabla_iW_{ij} \end{array}\right].
  \end{align}
The corresponding particle approximations are given by
\begin{align}
\left[\begin{array}c {\langle f_i\rangle}\\ {\langle f_i' \rangle}  \end{array}\right]
=  &\left[\begin{array}{ll}
  \sum_j v_j  W_{ij}         &\sum_j v_j x_{ji} W_{ij} \\
  \sum_j v_j \nabla_iW_{ij} &\sum_j v_j x_{ji}\nabla_iW_{ij} \\
  \end{array}\right]^{-1}
  \left[\begin{array}l \sum_jv_j f_j  W_{ij}\\ \sum_j v_j f_j  \nabla_iW_{ij} \end{array}\right].
  \end{align}
If $f$ is a linear function, then $f''(\xi_j)\equiv0$. Provided that the matrix specified in \eqref{FPM2} is nonsingular, the truncation errors for both the function and its first derivative, expressed as $\|f_i-\langle f_i\rangle\|_\infty$ and $\|f_i'-\langle f_i'\rangle\|_\infty$, respectively,  can be kept within the bounds of machine precision. This supports the claim that FPM guarantees $C^1$ particle consistency.

If $f(x)\in C^\infty(\Omega)$ is not a linear function, the truncation error of FPM depends on the second term on the right-hand side of the system \eqref{FPM2}. It is easy to verify that $\sum_j|v_j|\leq Ch^d$. Then, by using Lemma \ref{lem1}, one can obtain
\begin{align}
\sum_j v_j f''(\xi_j)x_{ji}^2  \nabla_iW_{ij} \leq C h\|f\|_{C^2(\Omega)}.
\end{align}
Given that the infinity norm of the inverse of the matrix given in \eqref{FPM2} is bounded, the truncation error for the first derivative achieves \textbf{first-order} accuracy.

In the context of using the FPM to compute $ f''(x)$, it is necessary to retain the second derivative term in the Taylor series expansion. Therefore, the Taylor expansion can be expressed as follows:
\begin{align} \label{Tylor1D}
f_{j}=f_i+ f_i' x_{ji}+\frac{1}{2!} f_i'' x_{ji}^2 +\frac{1}{3!} f'''(\xi_j) x_{ji}^3,
\end{align}
where $\xi_j$ is a point between $x_i$ and $x_j$.
To proceed with the computation using FPM, we multiply each term of the Taylor expansion \eqref{Tylor1D} by $v_jW_{ij}$, $v_j\nabla_i W_{ij}$ and $v_j\Delta_i W_{ij}$, respectively, and then sum over all $j$. Here $\Delta_iW_{ij}:=-2\mathbf{x}_{ij}\cdot \nabla_iW_{ij} /\|\mathbf{x}_{ij}\|^2$. These operations result in the following system:
  \begin{align}\label{FPM1}
  &\left[\begin{array}{lll}
  \sum_jv_j  W_{ij}         &\sum_j v_j x_{ji} W_{ij}       &\sum_jv_j  x_{ji}^2W_{ij}/2\\
  \sum_j v_j \nabla_iW_{ij} &\sum_j v_j x_{ji}\nabla_iW_{ij}&\sum_j v_j x_{ji}^2\nabla_iW_{ij}/2\\
  \sum_j v_j  \Delta_iW_{ij}&\sum_j v_j x_{ji}\Delta_iW_{ij}  &\sum_j v_j x_{ji}^2\Delta_iW_{ij}/2
  \end{array}\right]
  \left[\begin{array}c {f_i}\\ {f_i'}\\  f_i''  \end{array}\right]\nonumber\\
  =& \left[\begin{array}l \sum_jv_j f_j  W_{ij}\\ \sum_j v_j f_j  \nabla_iW_{ij}\\ \sum_j v_j f_j  \Delta_iW_{ij}\end{array}\right]
-\frac{1}{3!}\left[\begin{array}l \sum_jv_j f'''(\xi_j)  x_{ji}^3W_{ij}\\ \sum_j v_j f'''(\xi_j)  x_{ji}^3\nabla_iW_{ij}\\ \sum_j v_j f'''(\xi_j)  x_{ji}^3\Delta_iW_{ij}\end{array}\right].
  \end{align}
In view of the fact
  \begin{align*}
    \Big|\Delta_iW_{ij}\Big|=\Big|2\frac{ \mathbf{x}_{ij}\cdot \nabla_iW_{ij} }{\|\mathbf{x}_{ij}\|^2}\Big|=O(\frac{1}{\|\mathbf{x}_{ij}\|h^{d+1}}),
  \end{align*}
we have
\begin{align}
\sum_j v_j f'''(\xi_j)x_{ji}^3 \Delta_iW_{ij} \leq C h\|f\|_{C^3(\Omega)}.
\end{align}
Given that the infinity norm of the inverse of the matrix specified in \eqref{FPM2} is bounded, the truncation error of second derivative attains \textbf{first-order} accuracy.

In fact, other high-accuracy correction algorithms for SPH, like CSPH and KGF, encounter a similar challenge: achieving only first-order accuracy in the particle approximation of derivatives.
Additionally, we will carry out numerical validation of the FPM method's accuracy in derivative approximation in Section \ref{Sec6}.

\section{Regularity conditions for particle approximation}\label{Sec3}
In this section, we outline the regular conditions and show that the truncation errors for both first and second derivatives achieve second-order convergence under these specified conditions.

Initially, we define a d-tuple ordered pair $\alpha = (\alpha_{1}, \alpha_{2}, \ldots, \alpha_{d})$ as a multi-index, with each $\alpha_{k}$ being a non-negative integer. Furthermore, we denote the sum of the components as $|\alpha| = \sum_{k=1}^{d} \alpha_{k}$. For two multi-indices $\alpha$ and $\beta$:
\begin{itemize}
  \item The operations $\alpha \pm \beta$ result in the tuple $(\alpha_{1} \pm \beta_1, \alpha_{2} \pm \beta_2, \ldots, \alpha_{d} \pm \beta_d)$.
  \item The comparison $\alpha \leq \beta$ implies $\alpha_i \leq \beta_i$ for each $i$ from 1 to $d$.
\end{itemize}
Consider a $d$-dimensional real-valued function $f \in C^{\infty}(\Omega)$. The $m$-th order partial derivatives $\frac{\partial^m f}{\partial x^{\alpha_1}_1 \cdots \partial x^{\alpha_d}_d}$ with $m = \alpha_1 + \cdots + \alpha_d$, can be compactly expressed in multi-index notation as:
$D^{\alpha}f = D^{\alpha_1}_1 \cdots D^{\alpha_d}_d f$,
where   $D_i^{\alpha_i} := \frac{\partial^{\alpha_i}}{\partial x_i^{\alpha_i}}$ denotes the $\alpha_i$-th order partial derivative along the $i$-th coordinate. For the sake of brevity, we use $C$ to denote a generic positive constant that is independent of $h$.

\begin{lemma}\label{barbeta}
  For any $|\alpha|\geq 3, |\beta|=1, d=1,2,3$, there is a multi-index $\bar{\beta}$ with $|\bar{\beta}|=1$ satisfying
\begin{align}\label{eq1_bar}
\mathbf{x}_{ji}^\alpha \nabla_iW_{ij}^\beta
=\mathbf{x}_{ji}^{\alpha-2\bar{\beta}+\beta}\mathbf{x}_{ji}^{\bar{\beta}}\nabla_iW_{ij}^{\bar{\beta}}.
\end{align}
Furthermore, if  $W(\mathbf{x},h)$ satisfies the decay condition, there holds
\begin{align}\label{eq2_bar}
|\mathbf{x}_{ji}^\alpha \nabla_iW_{ij}^\beta|
=|\mathbf{x}_{ji}^{\alpha-2\bar{\beta}+\beta}|\mathbf{x}_{ji}^{\bar{\beta}}\nabla_iW_{ij}^{\bar{\beta}}.
\end{align}
\end{lemma}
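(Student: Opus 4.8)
\section*{Proof proposal}

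The plan is to reduce the whole statement to the parallel representation of the kernel gradient from Lemma~\ref{lem1-2}, after first using a short pigeonhole argument to produce an admissible $\bar\beta$. Write $\beta=\mathbf e_{k_0}$ for the unique index $k_0$ with $\beta_{k_0}=1$. The first step is to choose $\bar\beta$ with $|\bar\beta|=1$ so that $\alpha-2\bar\beta+\beta$ has only non-negative entries. If $\alpha_{k_0}\ge 1$, take $\bar\beta=\beta=\mathbf e_{k_0}$, so that $(\alpha-2\bar\beta+\beta)_{k_0}=\alpha_{k_0}-1\ge 0$. Otherwise $\alpha_{k_0}=0$; since $|\alpha|\ge 3$ while the remaining $d-1\le 2$ coordinates of $\alpha$ must then carry the entire mass, at least one of them, say index $m$, satisfies $\alpha_m\ge 2$, and we take $\bar\beta=\mathbf e_m$, giving $(\alpha-2\bar\beta+\beta)_m=\alpha_m-2\ge 0$. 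In every other coordinate the entry of $\alpha-2\bar\beta+\beta$ equals that of $\alpha+\beta$ and is trivially non-negative, so $\alpha-2\bar\beta+\beta$ is a genuine multi-index. This is exactly where the hypothesis $d\le 3$ is used: for $d\ge 4$ no such $\bar\beta$ need exist, as the example $\alpha=(1,1,1,0)$ with $\beta=\mathbf e_4$ shows.

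With $\bar\beta$ fixed, the identity \eqref{eq1_bar} is a direct computation. By Lemma~\ref{lem1-2}, $\nabla_iW_{ij}=c_{ij}\,\mathbf x_{ij}=-c_{ij}\,\mathbf x_{ji}$ with $c_{ij}=\frac{\sigma_d}{h^{d+1}}\frac{\partial_R\hat W(R)}{\|\mathbf x_{ij}\|}$. Since $|\beta|=|\bar\beta|=1$, componentwise exponentiation gives $\nabla_iW_{ij}^\beta=-c_{ij}\,\mathbf x_{ji}^\beta$ and $\nabla_iW_{ij}^{\bar\beta}=-c_{ij}\,\mathbf x_{ji}^{\bar\beta}$. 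Substituting, the left-hand side of \eqref{eq1_bar} becomes $-c_{ij}\,\mathbf x_{ji}^{\alpha+\beta}$, while the right-hand side becomes $-c_{ij}\,\mathbf x_{ji}^{(\alpha-2\bar\beta+\beta)+\bar\beta+\bar\beta}=-c_{ij}\,\mathbf x_{ji}^{\alpha+\beta}$, and the two agree. The admissibility of $\alpha-2\bar\beta+\beta$ secured in the first step is what guarantees that $\mathbf x_{ji}^{\alpha-2\bar\beta+\beta}$ is an honest monomial, rather than a formal rational expression that could be singular when a coordinate of $\mathbf x_{ji}$ vanishes.

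For \eqref{eq2_bar}, it suffices to observe that the factor $\mathbf x_{ji}^{\bar\beta}\nabla_iW_{ij}^{\bar\beta}$ is non-negative: with $\bar\beta=\mathbf e_m$ it equals $(x_{ji})_m\cdot(-c_{ij})(x_{ji})_m=-c_{ij}\,(x_{ji})_m^2$, and $c_{ij}\le 0$ because the decay condition forces $\partial_R\hat W(R)\le 0$ (again Lemma~\ref{lem1-2}). Taking absolute values on both sides of \eqref{eq1_bar} and pulling this non-negative factor out of the absolute value on the right yields \eqref{eq2_bar}.

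The only genuinely non-trivial ingredient is the pigeonhole step producing an admissible $\bar\beta$, and it is precisely there that the hypotheses $|\alpha|\ge 3$ and $d\le 3$ enter; everything afterwards is routine bookkeeping with the parallel representation of the kernel gradient, so I expect no further obstacle.
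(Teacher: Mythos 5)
Your proof is correct and follows essentially the same route as the paper's: both reduce everything to the parallel representation $\nabla_iW_{ij}\parallel\mathbf{x}_{ij}$ and use a pigeonhole argument to extract $\bar\beta$ with $2\bar\beta\le\alpha+\beta$ (the paper applies pigeonhole directly to $|\alpha+\beta|\ge 4$ over $d\le 3$ coordinates, while your case split on $\alpha_{k_0}$ is an equivalent bookkeeping of the same fact), then concludes the absolute-value identity from $\partial_R\hat W\le 0$. Your added counterexample for $d\ge 4$ is a nice touch but not needed.
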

\begin{proof}
By the definition of $\nabla_iW_{ij}$ in \eqref{estimateofnablaW}, we have
\begin{align}\label{lemm_a}
\mathbf{x}_{ji}^\alpha \nabla_iW_{ij}^\beta=-\frac{\sigma_d}{h^{d+1}}\partial_R\hat{W}  \frac{\mathbf{x}_{ji}^{\alpha+\beta}}{\|\mathbf{x}_{ij}\|}.
\end{align}
Noting the fact $|\alpha+\beta|\geq 4$ and $d\leq 3$, there necessarily exists a multi-index $\bar{\beta}$ with $|\bar{\beta}|=1$ such that $2\bar{\beta}\leq \alpha+\beta.$ Consequently, it follows that
\begin{align}\label{lemm_b}
\mathbf{x}_{ji}^{\alpha+\beta}= \mathbf{x}_{ji}^{\alpha+\beta-2\bar{\beta}}\mathbf{x}_{ji}^{\bar{\beta}}\mathbf{x}_{ji}^{\bar{\beta}}.
\end{align}
Substituting \eqref{lemm_b} into \eqref{lemm_a} gives
\begin{align}
\mathbf{x}_{ji}^\alpha \nabla_iW_{ij}^\beta=\mathbf{x}_{ji}^{\alpha+\beta-2\bar{\beta}}\mathbf{x}_{ji}^{\bar{\beta}}\frac{\sigma_d}{h^{d+1}}\partial_R\hat{W}  \frac{\mathbf{x}_{ij}^{\bar{\beta}}}{\|\mathbf{x}_{ij}\|}=\mathbf{x}_{ji}^{\alpha+\beta-2\bar{\beta}}\mathbf{x}_{ji}^{\bar{\beta}}\nabla_iW_{ij}^{\bar{\beta}}.
\end{align}
Therefore, equation \eqref{eq1_bar} is verified. Moreover, if the function $W(\mathbf{x},h)$ adheres to the decay condition, we have $\partial_R\hat{W}\leq 0$, which implies $\mathbf{x}_{ji}^{\bar{\beta}}\nabla_iW_{ij}^{\bar{\beta}}\geq 0$.
By integrating this with \eqref{eq1_bar}, we can directly derive \eqref{eq2_bar}.
With this, the proof is concluded.
\end{proof}

We now present the specific regularity condition under which second-order accuracy respect with $h$ can be achieved in gradient particle approximation.
\begin{theorem}[Gradient approximation]\label{th3.1}
Assume that $f \in C^3(\Omega )$ and $W(\mathbf{x}, h)$ satisfies the decay condition. Denote
\begin{equation}\label{gamma}
 \gamma_{\alpha,\beta}=\left\{
\begin{array}{ll}
 1, & \hbox{if $\alpha =\beta$;} \\
 0, & \hbox{if $|\alpha|=1,2$ and $\alpha \neq\beta$,}
\end{array}
\right.
\end{equation}
where $\alpha, \beta$ are multi-indexes and $|\beta|=1$.
If
\begin{align}
|\sum_j v_j   \mathbf{x}_{ji}^\alpha \nabla_iW_{ij}^\beta-\gamma_{\alpha,\beta}|\leq h^2 \text{ and }  v_j \geq 0, \label{cond1-1}
\end{align}
then
\begin{align}\label{approx_grad}
\|\sum_j v_j f_{ji}\nabla_iW_{ij}-\nabla f(\mathbf{x}_i)\|_\infty \leq Ch^2\|f\|_{C^3(\Omega)}.
\end{align}
\end{theorem}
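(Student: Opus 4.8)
The plan is to insert the third-order Taylor expansion of $f$ at $\mathbf{x}_i$ into the discrete gradient sum and let the moment hypothesis \eqref{cond1-1} do the bookkeeping, with Lemma \ref{barbeta} handling the remainder. Fix a particle $i$ and a multi-index $\beta$ with $|\beta|=1$; it suffices to estimate the $\beta$-component $\sum_j v_j f_{ji}\nabla_iW_{ij}^\beta$, since the $\|\cdot\|_\infty$-bound then follows by maximizing over the $d$ choices of $\beta$ and over $i$. Because $f\in C^3(\Omega)$, for every neighbour $j$ (so $\|\mathbf{x}_{ij}\|<h$) Taylor's theorem gives
\begin{align*}
f_{ji}=f_j-f_i=\sum_{|\alpha|=1}\frac{D^\alpha f(\mathbf{x}_i)}{\alpha!}\mathbf{x}_{ji}^\alpha+\sum_{|\alpha|=2}\frac{D^\alpha f(\mathbf{x}_i)}{\alpha!}\mathbf{x}_{ji}^\alpha+\sum_{|\alpha|=3}\frac{D^\alpha f(\xi_j)}{\alpha!}\mathbf{x}_{ji}^\alpha,
\end{align*}
with $\xi_j$ on the segment joining $\mathbf{x}_i$ and $\mathbf{x}_j$. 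Multiplying by $v_j\nabla_iW_{ij}^\beta$ and summing over $j$ decomposes $\sum_j v_j f_{ji}\nabla_iW_{ij}^\beta$ into a first-order part, a second-order part, and a cubic remainder.

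For the first-order part, note that $\alpha!=1$ when $|\alpha|=1$, so by \eqref{cond1-1} the coefficient of $D^\alpha f(\mathbf{x}_i)$ is $\sum_j v_j\mathbf{x}_{ji}^\alpha\nabla_iW_{ij}^\beta=\gamma_{\alpha,\beta}+O(h^2)$, which equals $1+O(h^2)$ when $\alpha=\beta$ and $O(h^2)$ otherwise; hence this part reproduces the $\beta$-component of $\nabla f(\mathbf{x}_i)$ up to an error $\le Ch^2\|f\|_{C^1(\Omega)}$. For the second-order part, every $\alpha$ with $|\alpha|=2$ differs from $\beta$ (as $|\beta|=1$), so $\gamma_{\alpha,\beta}=0$ and $|\sum_j v_j\mathbf{x}_{ji}^\alpha\nabla_iW_{ij}^\beta|\le h^2$ by \eqref{cond1-1}; pairing with the bounded coefficients $D^\alpha f(\mathbf{x}_i)/\alpha!$ contributes at most $Ch^2\|f\|_{C^2(\Omega)}$.

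The cubic remainder is the crux, and is where the decay condition and the sign assumption $v_j\ge0$ are used. Since $\xi_j$ depends on $j$, $D^\alpha f(\xi_j)$ cannot be factored out; I would only use $|D^\alpha f(\xi_j)|\le\|f\|_{C^3(\Omega)}$. For $|\alpha|=3$ and $|\beta|=1$, Lemma \ref{barbeta} supplies $\bar\beta$ with $|\bar\beta|=1$ and the identity \eqref{eq2_bar}, $|\mathbf{x}_{ji}^\alpha\nabla_iW_{ij}^\beta|=|\mathbf{x}_{ji}^{\alpha-2\bar\beta+\beta}|\,\mathbf{x}_{ji}^{\bar\beta}\nabla_iW_{ij}^{\bar\beta}$, in which the surviving monomial has degree $|\alpha-2\bar\beta+\beta|=2$ and therefore $|\mathbf{x}_{ji}^{\alpha-2\bar\beta+\beta}|\le h^2$ on the support; moreover $\mathbf{x}_{ji}^{\bar\beta}\nabla_iW_{ij}^{\bar\beta}\ge0$ by the decay condition. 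Using $v_j\ge0$, the remainder is thus bounded by $Ch^2\|f\|_{C^3(\Omega)}\sum_j v_j\mathbf{x}_{ji}^{\bar\beta}\nabla_iW_{ij}^{\bar\beta}$, and \eqref{cond1-1} applied to the diagonal pair $(\bar\beta,\bar\beta)$ (for which $\gamma=1$) gives $\sum_j v_j\mathbf{x}_{ji}^{\bar\beta}\nabla_iW_{ij}^{\bar\beta}\le1+h^2\le C$. Combining the three estimates and taking the maximum over $\beta$ and $i$ yields \eqref{approx_grad}. The main obstacle is exactly this last step: a crude bound treating $\nabla_iW_{ij}=O(h^{-d-1})$, the cubic monomial $O(h^3)$, and $\sum_j v_j=O(h^d)$ separately would also land at $O(h^2)$, but it needs an a priori estimate for $\sum_j v_j$; routing through Lemma \ref{barbeta} instead is self-contained and makes transparent why it is the third-order term — and not, say, the second-order term — that sets the accuracy.
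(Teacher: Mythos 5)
Your proposal is correct and follows essentially the same route as the paper: Taylor expansion to third order, absorbing the first- and second-order moment sums into the hypothesis \eqref{cond1-1} via $\gamma_{\alpha,\beta}$, and controlling the cubic remainder through Lemma \ref{barbeta} together with $v_j\ge 0$, the decay condition, and the diagonal bound $\sum_j v_j\mathbf{x}_{ji}^{\bar\beta}\nabla_iW_{ij}^{\bar\beta}\le 1+h^2$. The only differences are cosmetic (your $\alpha!$ Taylor normalization versus the paper's $m!$ convention, and bounding $|D^\alpha f(\xi_j)|$ by $\|f\|_{C^3(\Omega)}$ directly rather than via a $\max_j$), neither of which affects the argument.
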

\begin{proof}
Applying the Taylor expansion for $f(\mathbf{x})$ at $\mathbf{x}_i$ gives
\begin{align}\label{Tylor}
f(\mathbf{x}_j)=f(\mathbf{x}_i)+ \sum_{|\alpha|=1} D^\alpha f_i \mathbf{x}_{ji}^\alpha+\frac{1}{2!}\sum_{|\alpha|=2} D^\alpha f_i \mathbf{x}_{ji}^\alpha+\frac{1}{3!}\sum_{|\alpha|=3} D^\alpha f(\xi_j) \mathbf{x}_{ji}^\alpha,
\end{align}
where $\xi_j$ is a point between $\mathbf{x}_i$ and $\mathbf{x}_j$.
Multiplying $v_j\nabla_iW_{ij}^\beta$ on both sides of \eqref{Tylor} and summing over $j$, we have
\begin{align}\label{eeqq1}
\sum_j v_j f_j  \nabla_iW_{ij}^\beta
=&f_i\sum_j v_j  \nabla_iW_{ij}^\beta +\sum_{|\alpha|=1}D^\alpha f_i\sum_j v_j  \mathbf{x}_{ji}^\alpha  \nabla_iW_{ij}^\beta \nonumber\\
&+\frac{1}{2!}\sum_{|\alpha|=2}D^\alpha f_i\sum_j v_j  \mathbf{x}_{ji}^\alpha  \nabla_iW_{ij}^\beta +\frac{1}{3!}\sum_{|\alpha|=3}\sum_j D^\alpha f(\xi_j) v_j   \mathbf{x}_{ji}^\alpha  \nabla_iW_{ij}^\beta.
\end{align}
Based on the definition of $\gamma_{\alpha,\beta}$, it follows that
\begin{align}\label{eeqq2-1}
\sum_{|\alpha|=1}D^\alpha f_i\sum_j v_j  \mathbf{x}_{ji}^\alpha  \nabla_iW_{ij}^\beta- D^\beta f_i
=\sum_{|\alpha|=1} D^\alpha f_i(\sum_j v_j  \mathbf{x}_{ji}^\alpha  \nabla_iW_{ij}^\beta-\gamma_{\alpha,\beta})
\end{align}
and
\begin{align}\label{eeqq2-2}
\frac{1}{2!}\sum_{|\alpha|=2}D^\alpha f_i\sum_j v_j  \mathbf{x}_{ji}^\alpha  \nabla_iW_{ij}^\beta
=\frac{1}{2!}\sum_{|\alpha|=2}D^\alpha f_i(\sum_j v_j  \mathbf{x}_{ji}^\alpha  \nabla_iW_{ij}^\beta-\gamma_{\alpha,\beta}).
\end{align}
Substituting \eqref{eeqq2-1} and \eqref{eeqq2-2} into equation \eqref{eeqq1} yields
\begin{align}\label{eeqq1_new}
&\sum_j v_j f_{ji}  \nabla_iW_{ij}^\beta -D^\beta f_i=\sum_{|\alpha|=1} D^\alpha f_i(\sum_j v_j  \mathbf{x}_{ji}^\alpha  \nabla_iW_{ij}^\beta-\gamma_{\alpha,\beta})\nonumber\\
&\quad +\frac{1}{2!}\sum_{|\alpha|=2}D^\alpha f_i(\sum_j v_j  \mathbf{x}_{ji}^\alpha  \nabla_iW_{ij}^\beta-\gamma_{\alpha,\beta})+\frac{1}{3!}\sum_{|\alpha|=3}\sum_j D^\alpha f(\xi_j) v_j   \mathbf{x}_{ji}^\alpha  \nabla_iW_{ij}^\beta.
\end{align}
Using Lemma \ref{barbeta}, we can estimate the last term on the right-hand side of \eqref{eeqq1_new} in the following manner:
\begin{align}\label{eeqq2-3}
&|\frac{1}{3!}\sum_{|\alpha|=3}\sum_j D^\alpha f(\xi_j) v_j  \mathbf{x}_{ji}^\alpha  \nabla_iW_{ij}^\beta|
\nonumber \\
&\leq \frac{1}{3!}\sum_{|\alpha|=3}\sum_j v_j|D^\alpha f(\xi_j)|  |\mathbf{x}_{ji}^{\alpha+\beta-2\bar{\beta}}| \mathbf{x}_{ji}^{\bar{\beta}} \nabla_iW_{ij}^{\bar{\beta}}\\
&\leq \frac{1}{3!}\sum_{|\alpha|=3}\max_j\{|D^\alpha f(\xi_j)||\mathbf{x}_{ji}^{\alpha+\beta-2\bar{\beta}}|\}\Big(1+(\sum_j v_j  \mathbf{x}_{ji}^{\bar{\beta}} \nabla_iW_{ij}^{\bar{\beta}}-1)\Big).\nonumber
\end{align}
Taking absolute values on both sides of \eqref{eeqq1_new} and applying \eqref{cond1-1}, \eqref{eeqq2-3}, we can obtain
\begin{align}\label{eeqq3}
|\sum_j v_j f_{ji}  \nabla_iW_{ij}^\beta -D^\beta f_i|\leq  Ch^2\|f\|_{C^3(\Omega)}.
\end{align}
Finally, \eqref{eeqq3} leads directly to the derivation of \eqref{approx_grad}. With this, the proof is concluded.
\end{proof}

Being a higher-order derivative, the particle approximation of the Laplace operator inherently demands more stringent conditions than the gradient approximation. In contrast to the conditions outlined in Theorem \ref{th3.1}, further requirements are imposed for the scenario where $|\alpha| = 0$, as presented in the subsequent theorem.
\begin{theorem}[Laplace approximation]\label{th3.2}
Assume that $f \in C^4(\Omega )$ and $W(\mathbf{x}, h)$ satisfies the decay condition. Denote
$$
 \bar{\gamma}_{\alpha,\beta}=\left\{
\begin{array}{ll}
 \gamma_{\alpha,\beta}, & \hbox{$|\alpha|=1,2$;} \\
 0, & \hbox{ $|\alpha|=0$,}
\end{array}
\right. $$
where $\gamma_{\alpha,\beta}$ is given by \eqref{gamma}.
If
\begin{align}
|\sum_j v_j   \mathbf{x}_{ji}^\alpha \nabla_iW_{ij}^\beta-\bar{\gamma}_{\alpha,\beta}|\leq h^2 \text{ and } v_j\geq 0,\label{cond2-11}
\end{align}
then
\begin{align}\label{approx_grad2}
\|2\sum_j v_j f_{ij}\frac{\mathbf{x}_{ij}\cdot \nabla_iW_{ij}}{\|\mathbf{x}_{ij}\|^2}-\Delta f(\mathbf{x}_i))\|_\infty\leq C h^2\|f\|_{C^4(\Omega)}.
\end{align}
\end{theorem}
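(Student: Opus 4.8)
The plan is to mimic the structure of the proof of Theorem~\ref{th3.1}, but carry the Taylor expansion one order further and pay careful attention to the $|\alpha|=0$ and $|\alpha|=3$ contributions, which now appear because the Laplacian scheme carries the extra singular factor $\|\mathbf{x}_{ij}\|^{-2}$. First I would expand $f(\mathbf{x}_j)$ about $\mathbf{x}_i$ to fourth order with a $C^4$ remainder,
\begin{align*}
f(\mathbf{x}_j)=f(\mathbf{x}_i)+\sum_{|\alpha|=1} D^\alpha f_i\,\mathbf{x}_{ji}^\alpha+\tfrac{1}{2!}\sum_{|\alpha|=2} D^\alpha f_i\,\mathbf{x}_{ji}^\alpha+\tfrac{1}{3!}\sum_{|\alpha|=3} D^\alpha f_i\,\mathbf{x}_{ji}^\alpha+\tfrac{1}{4!}\sum_{|\alpha|=4} D^\alpha f(\xi_j)\,\mathbf{x}_{ji}^\alpha,
\end{align*}
then multiply through by $2 v_j \mathbf{x}_{ij}\cdot\nabla_iW_{ij}/\|\mathbf{x}_{ij}\|^2$ and sum over $j$. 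Writing $\mathbf{x}_{ij}\cdot\nabla_iW_{ij}=\sum_{|\beta|=1}\mathbf{x}_{ij}^\beta\nabla_iW_{ij}^\beta$, each term of the sum becomes a combination of the moments $\sum_j v_j \mathbf{x}_{ji}^{\alpha'}\nabla_iW_{ij}^\beta$ controlled by \eqref{cond2-11}, after absorbing the factor $\mathbf{x}_{ij}^\beta/\|\mathbf{x}_{ij}\|^2$. The key algebraic observation is that $2\,\mathbf{x}_{ji}^\alpha\,\mathbf{x}_{ij}^\beta/\|\mathbf{x}_{ij}\|^2$ with $|\beta|=1$ reduces the total degree in $\mathbf{x}_{ji}$ by one when $\alpha$ contains $\beta$ (here $\mathbf{x}_{ji}^\beta\mathbf{x}_{ij}^\beta=-(\mathbf{x}_{ji}^\beta)^2$ cancels one power of $\|\mathbf{x}_{ij}\|^2$ componentwise, in spirit of Lemma~\ref{barbeta}), so the $|\alpha|=0$ piece of the scheme effectively pairs against first moments, the $|\alpha|=2$ piece pairs against zeroth/second moments, etc.

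Next I would identify the consistency terms. The $|\alpha|=0$ contribution is $f_i\cdot 2\sum_j v_j \mathbf{x}_{ij}\cdot\nabla_iW_{ij}/\|\mathbf{x}_{ij}\|^2 = -2 f_i \sum_{|\beta|=1}\sum_j v_j\,\mathbf{x}_{ji}^\beta\nabla_iW_{ij}^\beta/\|\mathbf{x}_{ij}\|^2$; this is exactly why $\bar\gamma_{\alpha,\beta}$ is set to $0$ for $|\alpha|=0$ — condition \eqref{cond2-11} forces $|\sum_j v_j \mathbf{x}_{ji}^\beta\nabla_iW_{ij}^\beta-0|\le h^2$ so this whole block is $O(h^2)\|f\|_{C^4}$ after accounting for the $\|\mathbf{x}_{ij}\|^{-2}$ scaling against the degree-$2$ numerator $\mathbf{x}_{ji}^\beta\mathbf{x}_{ij}^\beta$. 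The $|\alpha|=1$ block vanishes by parity of $\mathbf{x}_{ji}^\alpha\mathbf{x}_{ij}^\beta/\|\mathbf{x}_{ij}\|^2$ paired with the moments, or is absorbed into the $h^2$ tolerance via $\bar\gamma_{\alpha,\beta}=\gamma_{\alpha,\beta}$ (which is $0$ when $|\alpha|=1$, $\alpha\neq\beta$; note $|\alpha|=1$, $\alpha=\beta$ cannot occur because $\mathbf{x}_{ji}^\alpha\mathbf{x}_{ij}^\beta$ has degree $2$). The $|\alpha|=2$ block produces the genuine Laplacian: when $\alpha=2\beta$ for some $|\beta|=1$ we get $\tfrac{1}{2}D^{2\beta}f_i \cdot 2\sum_j v_j \mathbf{x}_{ji}^{2\beta}\mathbf{x}_{ij}^\beta \nabla_iW_{ij}^\beta/\|\mathbf{x}_{ij}\|^2$; the reduction identity collapses $\mathbf{x}_{ji}^{2\beta}\mathbf{x}_{ij}^\beta/\|\mathbf{x}_{ij}\|^2$ down so this equals $D^{2\beta}f_i$ up to moments within the $h^2$ window, and summing over $|\beta|=1$ recovers $\sum_{|\beta|=1}D^{2\beta}f_i=\Delta f_i$; the mixed terms $\alpha\neq 2\beta$ are killed by $\bar\gamma=0$ together with \eqref{cond2-11}. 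The $|\alpha|=3$ block is new relative to Theorem~\ref{th3.1}: here I would invoke Lemma~\ref{barbeta} (which is stated precisely for $|\alpha|\ge 3$) to rewrite $\mathbf{x}_{ji}^{\alpha}\nabla_iW_{ij}^{\bar\beta}$ in a form whose absolute value is controlled, note that after the $\|\mathbf{x}_{ij}\|^{-2}$ division the effective power of $\mathbf{x}_{ji}$ is still at least $1$, so each such term is bounded by $C h^2\|f\|_{C^3}$ uniformly; and the $|\alpha|=4$ remainder is trivially $O(h^2)\|f\|_{C^4}$ since after dividing by $\|\mathbf{x}_{ij}\|^2$ we retain two net powers of the spacing. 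Finally I would collect all pieces, take absolute values, use $\sum_j|v_j|\le Ch^d$ and Lemma~\ref{lem1} exactly as in the FPM estimate, and conclude \eqref{approx_grad2}.

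The main obstacle will be the bookkeeping for the $|\alpha|=3$ and $|\alpha|=0$ terms, because these are precisely where the extra $\|\mathbf{x}_{ij}\|^{-2}$ in the Laplacian scheme could, a priori, destroy the order count — one has to verify that in every term the singular denominator is more than compensated by powers of $\mathbf{x}_{ji}$ coming from the Taylor monomial times the $\mathbf{x}_{ij}^\beta$ weight, so that no term is merely $O(h)$. Lemma~\ref{barbeta} is the right tool for the $|\alpha|=3$ case (and the $|\alpha|=4$ remainder after reindexing), but one must be slightly careful that the multi-index $\bar\beta$ it produces may differ from the $\beta$ indexing the scheme; the cleanest route is to first absorb $\mathbf{x}_{ij}^\beta/\|\mathbf{x}_{ij}\|^2$ into the monomial to reduce to an effective moment $\sum_j v_j \mathbf{x}_{ji}^{\alpha'}\nabla_iW_{ij}^\beta$ with $|\alpha'|=|\alpha|$, and only then apply the lemma. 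A secondary, purely notational subtlety is that the factor $2\,\mathbf{x}_{ji}^\beta\mathbf{x}_{ij}^\beta=-2(\mathbf{x}_{ji}^\beta)^2$ carries a sign, so the reduction of $\mathbf{x}_{ji}^{2\beta}\mathbf{x}_{ij}^\beta/\|\mathbf{x}_{ij}\|^2$ needs the componentwise identity $(\mathbf{x}_{ji}^\beta)^2/\|\mathbf{x}_{ij}\|^2\le 1$ handled with the correct sign so that the leading coefficient comes out to exactly $+1$ and not $-1$; getting the constant right here is what makes the recovered operator the genuine $\Delta f_i$ rather than its negative.
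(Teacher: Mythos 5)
Your overall architecture (fourth‑order Taylor expansion, reduction to the moments of \eqref{cond2-11}, Lemma~\ref{barbeta} for the cubic/remainder terms) is the same as the paper's, but there are two concrete gaps at the central step. First, the reduction of the Laplacian weights to the moments in the hypothesis cannot be done with the componentwise inequality $(\mathbf{x}_{ji}^\beta)^2/\|\mathbf{x}_{ij}\|^2\le 1$ that you propose at the end: an inequality can never produce the exact coefficient $+1$ in front of $D^{2\beta}f_i$, and you yourself flag this as unresolved. The tool that closes it is the collinearity in Lemma~\ref{lem1-2}: since $\nabla_iW_{ij}=c\,\mathbf{x}_{ij}$ for a scalar $c$, one has the \emph{exact componentwise identity} $\mathbf{x}_{ij}^{\beta}Y_{ij}=-2\nabla_iW_{ij}^{\beta}$ with $Y_{ij}:=-2\,\mathbf{x}_{ij}\cdot\nabla_iW_{ij}/\|\mathbf{x}_{ij}\|^2$, so every Taylor monomial $\mathbf{x}_{ji}^{\bar\alpha}$ paired with $Y_{ij}$ equals exactly $2\,\mathbf{x}_{ji}^{\bar\alpha-\beta}\nabla_iW_{ij}^{\beta}$ for any $\beta\le\bar\alpha$, $|\beta|=1$. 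This is what turns every block into precisely one of the moments $\sum_j v_j\mathbf{x}_{ji}^{\alpha}\nabla_iW_{ij}^{\beta}$ appearing in \eqref{cond2-11}, with no leftover factor $\mathbf{x}_{ij}^\beta/\|\mathbf{x}_{ij}\|^2$; without invoking it your ``key algebraic observation'' does not actually land on the hypothesized moments.

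Second, you misassign the role of the $|\alpha|=0$ row of \eqref{cond2-11}. The constant Taylor term contributes nothing because the scheme uses the difference $f_{ij}=f_i-f_j$, so there is no ``$f_i$ times zeroth moment'' block to control. What the condition $|\sum_j v_j\nabla_iW_{ij}^{\beta}|\le h^2$ (i.e.\ $\bar\gamma_{0,\beta}=0$) actually controls is the \emph{first‑derivative} contribution $D^{\beta}f_i\sum_j v_j\mathbf{x}_{ji}^{\beta}Y_{ij}=2D^{\beta}f_i\sum_j v_j\nabla_iW_{ij}^{\beta}$, which you instead dismiss ``by parity'' — an argument that fails for the irregular distributions this theorem is designed for. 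Relatedly, your claim that \eqref{cond2-11} forces $|\sum_j v_j\mathbf{x}_{ji}^{\beta}\nabla_iW_{ij}^{\beta}-0|\le h^2$ contradicts the hypothesis: for $\alpha=\beta$, $|\alpha|=1$ one has $\gamma_{\beta,\beta}=1$, so that moment is within $h^2$ of \emph{one} (and the paper uses exactly $\sum_j v_j\mathbf{x}_{ji}^{\bar\beta}\nabla_iW_{ij}^{\bar\beta}\le 1+h^2$ to bound the $|\alpha|=3$ and remainder terms after applying Lemma~\ref{barbeta}). Once these two points are repaired, the rest of your outline — identifying $\sum_{|\beta|=1}D^{2\beta}f_i=\Delta f_i$ from the quadratic Taylor block and bounding the cubic and quartic blocks by $Ch^2\|f\|_{C^4(\Omega)}$ — goes through as in the paper.
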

\begin{proof}
Denote $Y_{ij}:=-2\frac{\mathbf{x}_{ij}\cdot \nabla_iW_{ij}}{\|\mathbf{x}_{ij}\|^2}$.
By using Lemma \ref{lem1-2}, it follows that
\begin{align}\label{co-linear}
 \mathbf{x}_{ij}Y_{ij}=-2\nabla_iW_{ij} \text{ and } \nabla_iW_{ij}^\beta=-\frac{1}{2}\mathbf{x}_{ij}^\beta Y_{ij}.
\end{align}
From \eqref{co-linear}, \eqref{cond2-11} can be rewritten as
\begin{align} \label{cond2-2}
|\frac{1}{2}\sum_j v_j   \mathbf{x}_{ji}^\alpha\mathbf{x}_{ji}^\beta  Y_{ij}-\bar{\gamma}_{\alpha,\beta}|\leq h^2.
\end{align}
Applying the Taylor expansion for $f$ at $\mathbf{x}_i$ gives
\begin{align}\label{Tylor2}
f(\mathbf{x}_j)=&f(\mathbf{x}_i)+ \sum_{|\alpha|=1} D^\alpha f_i \mathbf{x}_{ji}^\alpha+\frac{1}{2!}\sum_{|\alpha|=2} D^\alpha f_i \mathbf{x}_{ji}^\alpha\nonumber\\
&+\frac{1}{3!}\sum_{|\alpha|=3} D^\alpha f_i \mathbf{x}_{ji}^\alpha+\frac{1}{4!}\sum_{|\alpha|=4} D^\alpha f(\xi_j) \mathbf{x}_{ji}^\alpha,
\end{align}
where $\xi_j$ is a point between $\mathbf{x}_i$ and $\mathbf{x}_j$.
Multiplying $v_j Y_{ij}$ on both sides of \eqref{Tylor2} and summing over $j$, we have
\begin{align}\label{d}
\sum_j v_j f_{ji} Y_{ij} =& \sum_{|\bar{\alpha}|=1}D^{\bar{\alpha}} f_i\sum_j v_j  \mathbf{x}_{ji}^{\bar{\alpha}} Y_{ij} +\frac{1}{2!}\sum_{|\bar{\alpha}|=2}D^{\bar{\alpha}} f_i\sum_j v_j  \mathbf{x}_{ji}^{\bar{\alpha}} Y_{ij} \nonumber\\
&+ \frac{1}{3!}\sum_{|\bar{\alpha}|=3}D^{\bar{\alpha}} f_i\sum_j v_j  \mathbf{x}_{ji}^{\bar{\alpha}} Y_{ij} + \frac{1}{4!}\sum_{|\bar{\alpha}|=4}\sum_j v_j D^{\bar{\alpha}} f(\xi_j) \mathbf{x}_{ji}^{\bar{\alpha}} Y_{ij}.
\end{align}
By partitioning the multi-index $\bar{\alpha}$ into two distinct multi-indices $\alpha$ and $\beta$ where $|\beta| = 1$, denoted as $\bar{\alpha} = \alpha + \beta$, equation \eqref{d} can be reformulated as
\begin{align}\label{ddd}
\sum_j v_j f_{ji} Y_{ij} =& \sum_{|\alpha|=0,  |\beta|=1} D^\beta f_i\sum_j v_j  \mathbf{x}_{ji}^\beta Y_{ij} +\frac{1}{2!}\sum_{|\alpha|=1, |\beta|=1}D^{\alpha+\beta} f_i\sum_j v_j  \mathbf{x}_{ji}^\alpha \mathbf{x}_{ji}^\beta Y_{ij} \nonumber\\
&+ \frac{1}{3!}\sum_{|\alpha|=2, |\beta|=1}D^{\alpha+\beta} f_i\sum_j v_j  \mathbf{x}_{ji}^\alpha\mathbf{x}_{ji}^\beta Y_{ij}\\
& + \frac{1}{4!}\sum_{|\alpha|=3, |\beta|=1}\sum_j v_j D^{\alpha+\beta} f(\xi_j) \mathbf{x}_{ji}^\alpha\mathbf{x}_{ji}^\beta Y_{ij}.\nonumber
\end{align}
Based on the definition of $\bar{\gamma}_{\alpha,\beta}$, we have
\begin{align}\label{laplace_e1}
&\frac{1}{2!}\sum_{|\alpha|=1;|\beta|=1}D^{\alpha+\beta} f_i\sum_j v_j  \mathbf{x}_{ji}^\alpha \mathbf{x}_{ji}^\beta Y_{ij}
-\sum_{\alpha=\beta,|\beta|=1}D^{\alpha+\beta} f_i\nonumber \\
&=\sum_{|\alpha|=1;|\beta|=1}D^{\alpha+\beta} f_i
(\frac{1}{2!}\sum_j v_j  \mathbf{x}_{ji}^\alpha \mathbf{x}_{ji}^\beta Y_{ij}-\bar{\gamma}_{\alpha,\beta}),
\end{align}
and
\begin{align}\label{laplace_e2}
\sum_{\substack{|\alpha|=0,2;\\|\beta|=1}}D^{\alpha+\beta} f_i\sum_j v_j  \mathbf{x}_{ji}^\alpha\mathbf{x}_{ji}^\beta Y_{ij}=\sum_{\substack{|\alpha|=0,2;\\|\beta|=1}}D^{\alpha+\beta} f_i(\sum_j v_j  \mathbf{x}_{ji}^\alpha\mathbf{x}_{ji}^\beta Y_{ij}-\bar{\gamma}_{\alpha,\beta}).
\end{align}
Substituting \eqref{laplace_e1} and \eqref{laplace_e2} into equation \eqref{ddd}, then computing absolute values and applying \eqref{cond2-2}, we arrive at
\begin{align}\label{eeqq5}
|\sum_j v_j f_{ji} Y_{ij} - \sum_{\beta}D^{2\beta} f_i| \leq |\frac{1}{4!}\sum_{|\alpha|=3;\beta}\sum_j v_j D^{\alpha+\beta} f(\xi_j) \mathbf{x}_{ji}^\alpha\mathbf{x}_{ji}^\beta Y_{ij}|+Ch^2\|f\|_{C^3(\Omega)}.
\end{align}
It follows from \eqref{co-linear} and Lemma \ref{barbeta} that
\begin{align}\label{Laplace_b}
\big|\mathbf{x}_{ji}^\alpha\mathbf{x}_{ji}^\beta Y_{ij}\big|=\big|-2\mathbf{x}_{ji}^\alpha\nabla_iW_{ij}^\beta\big|
=\big|-2\mathbf{x}_{ji}^{\alpha+\beta-2\bar{\beta}}\big|\mathbf{x}_{ji}^{\bar{\beta}}\nabla_iW_{ij}^{\bar{\beta}}, \quad \forall |\alpha|=3.
\end{align}
Therefore, the first term on the right hand side of \eqref{eeqq5} can be estimated as follows
\begin{align}\label{eeqq6}
&\big|\frac{1}{4!}\sum_{|\alpha|=3;\beta}\sum_j v_j D^{\alpha+\beta} f(\xi_j) \mathbf{x}_{ji}^\alpha\mathbf{x}_{ji}^\beta Y_{ij}\big|\nonumber\\
&\leq \frac{1}{4!}\sum_{|\alpha|=3;\beta}\sum_j v_j \big|D^{\alpha+\beta} f(\xi_j)\big|
\big|2\mathbf{x}_{ji}^{\alpha+\beta-2\bar{\beta}}\big|\mathbf{x}_{ji}^{\bar{\beta}}\nabla_iW_{ij}^{\bar{\beta}} \nonumber\\
 &\leq \frac{1}{4!}\sum_{|\alpha|=3;\beta}\max_j\{|D^{\alpha+\beta} f(\xi_j)|
\big|2\mathbf{x}_{ji}^{\alpha+\beta-2\bar{\beta}}\big|\}\sum_j v_j \mathbf{x}_{ji}^{\bar{\beta}}\nabla_iW_{ij}^{\bar{\beta}}\\
&\leq Ch^2\|f\|_{C^4(\Omega)},\nonumber
\end{align}
where the last inequality follows from \eqref{cond2-11}.
Combining \eqref{eeqq5} and \eqref{eeqq6},  we obtain
\begin{align}\label{laplace_a}
|\sum_j v_j f_{ji} Y_{ij} -  \Delta f_i| \leq Ch^2\|f\|_{C^4(\Omega)}.
\end{align}
Equation \eqref{approx_grad2} can be directly deduced from \eqref{laplace_a}.
Thus, the proof concludes.
\end{proof}

The analysis of the truncation error outlined above can also be extended to other conventional operators like the Morris operator. Specifically, we can furnish the subsequent approximation error estimate for the Morris operator.
\begin{theorem}[Morris operator approximation]\label{th3.3}
Under the conditions of Theorem \ref{th3.2} and considering the assumption that the scalar function $a \in C^3(\Omega)$, the following holds
\begin{align}\label{approx_grad3}
\left\|\sum_j v_j f_{ij} \frac{(a_i+a_j)\mathbf{x}_{ij}\cdot \nabla_iW_{ij}}{\|\mathbf{x}_{ij}\|^2}-\mathcal{L} f(\mathbf{x}_i)\right\|_\infty\leq Ch^2\|a\|_{C^3(\Omega)}\|f\|_{C^4(\Omega)}.
\end{align}
\end{theorem}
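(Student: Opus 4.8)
The plan is to mimic the structure of the proof of Theorem \ref{th3.2}, treating the variable coefficient $a$ by absorbing the factor $(a_i+a_j)$ into a suitable two-term split. First I would write $a_i + a_j = 2a_i - a_{ij}$ where $a_{ij} := a_i - a_j$, so that
\begin{align}\label{th33_split}
\sum_j v_j f_{ij}\frac{(a_i+a_j)\mathbf{x}_{ij}\cdot\nabla_iW_{ij}}{\|\mathbf{x}_{ij}\|^2}
= 2a_i\sum_j v_j f_{ij}\frac{\mathbf{x}_{ij}\cdot\nabla_iW_{ij}}{\|\mathbf{x}_{ij}\|^2}
- \sum_j v_j a_{ij} f_{ij}\frac{\mathbf{x}_{ij}\cdot\nabla_iW_{ij}}{\|\mathbf{x}_{ij}\|^2}.
\end{align}
By Theorem \ref{th3.2}, the first term on the right-hand side equals $-a_i\Delta f(\mathbf{x}_i)$ up to an error bounded by $Ch^2\|a\|_{C^0}\|f\|_{C^4(\Omega)}$ (the $a_i$ prefactor is $O(1)$). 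It then remains to show that the second term approximates $-\nabla a(\mathbf{x}_i)\cdot\nabla f(\mathbf{x}_i)$ to second order, since $\mathcal{L}f = a\Delta f + \nabla a\cdot\nabla f$.

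For the second term I would Taylor expand \emph{both} $f$ and $a$ about $\mathbf{x}_i$. Writing $Y_{ij} := -2\,\mathbf{x}_{ij}\cdot\nabla_iW_{ij}/\|\mathbf{x}_{ij}\|^2$ as in Theorem \ref{th3.2}, the term is $\tfrac12\sum_j v_j a_{ij}f_{ij}Y_{ij}$. Expanding $a_{ij} = -\sum_{|\mu|=1}D^\mu a_i\,\mathbf{x}_{ji}^\mu + O(\|\mathbf{x}_{ji}\|^2\|a\|_{C^2})$ and $f_{ij} = -\sum_{|\nu|=1}D^\nu f_i\,\mathbf{x}_{ji}^\nu + O(\|\mathbf{x}_{ji}\|^2\|f\|_{C^2})$, the product $a_{ij}f_{ij}$ contributes a leading quadratic term $\sum_{|\mu|=|\nu|=1}D^\mu a_i\,D^\nu f_i\,\mathbf{x}_{ji}^{\mu}\mathbf{x}_{ji}^{\nu}$ plus higher-order remainders. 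Multiplying by $v_jY_{ij} = -v_j\mathbf{x}_{ij}^\beta Y_{ij}/\mathbf{x}_{ij}^\beta$ — more precisely using the co-linearity identity $\mathbf{x}_{ji}^\gamma Y_{ij} = -2\,\mathbf{x}_{ji}^{\gamma-\beta}\nabla_iW_{ij}^\beta$ from \eqref{co-linear} — and summing, the leading term becomes $\sum_{|\mu|=|\nu|=1}D^\mu a_i D^\nu f_i\big(\tfrac12\sum_j v_j\mathbf{x}_{ji}^{\mu+\nu}Y_{ij}\big)$, which by \eqref{cond2-2} (with $|\alpha|=1$, so $\bar\gamma_{\alpha,\beta}=\gamma_{\alpha,\beta}$) equals $\sum_{\mu}D^\mu a_i D^\mu f_i = \nabla a_i\cdot\nabla f_i$ up to an $O(h^2)$ error. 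The remainder terms in the product expansion carry at least three factors of $\mathbf{x}_{ji}$ against $Y_{ij}$, so after applying Lemma \ref{barbeta} to pull out a factor $\mathbf{x}_{ji}^{\bar\beta}\nabla_iW_{ij}^{\bar\beta}\geq 0$ and using $v_j\geq 0$ together with $\sum_j v_j\mathbf{x}_{ji}^{\bar\beta}\nabla_iW_{ij}^{\bar\beta}\leq 1+h^2$ from \eqref{cond2-11}, each such remainder is $O(h^2)\|a\|_{C^3}\|f\|_{C^4}$ — the factor $h^2$ coming from the surplus $\|\mathbf{x}_{ji}\|^2 = O(h^2)$ after two of the $\mathbf{x}_{ji}$ factors are consumed converting $Y_{ij}$ into $\nabla_iW_{ij}$. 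Combining the two terms of \eqref{th33_split} then gives $a_i\Delta f_i + \nabla a_i\cdot\nabla f_i = \mathcal{L}f(\mathbf{x}_i)$ with the stated $Ch^2\|a\|_{C^3(\Omega)}\|f\|_{C^4(\Omega)}$ bound.

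The main obstacle is the careful bookkeeping of the bilinear expansion of $a_{ij}f_{ij}$: one must check that every cross term other than the single quadratic piece $\mathbf{x}_{ji}^{\mu+\nu}$ (with $|\mu|=|\nu|=1$) either vanishes by the regularity condition or else contributes at the $O(h^2)$ level, and in particular one must confirm that the \emph{zeroth-order} piece is genuinely absent — it is, because $a_{ij}f_{ij}$ has no constant term, which is exactly why $\bar\gamma_{\alpha,\beta}$ with $|\alpha|=0$ (the delicate case that forced extra hypotheses in Theorem \ref{th3.2}) does not re-enter here. A minor subtlety is that the cubic-in-$\mathbf{x}_{ji}$ terms (e.g. $D^\mu a_i$ times a quadratic Taylor term of $f$, or vice versa) need Lemma \ref{barbeta}, which requires $|\alpha|\geq 3$ for the exponent hitting $\nabla_iW_{ij}^\beta$; since here that exponent is $\mu+\nu+(\text{quadratic})$ of total degree $\geq 3$, the lemma applies cleanly, and the decay condition ensures the sign needed to bound by $1+h^2$ rather than a divergent quantity.
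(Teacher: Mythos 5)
Your overall strategy is the same as the paper's: peel off $2a_i$ so that Theorem \ref{th3.2} handles the $a_i\Delta f_i$ part, then Taylor-expand what remains bilinearly in $a$ and $f$ and use the regularity condition \eqref{cond2-2} to recover $\nabla a_i\cdot\nabla f_i$ from the leading quadratic piece. (The paper writes $a_i+a_j = 2a_i + \nabla a_i\cdot\mathbf{x}_{ji} + \hat R$ and reuses Theorem \ref{th3.1} for the linear piece, whereas you keep $a_i+a_j = 2a_i - a_{ij}$ and extract $\nabla a\cdot\nabla f$ directly from \eqref{cond2-2} with $|\alpha|=|\beta|=1$; that difference is cosmetic. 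The stray minus signs in your intermediate claims ``$-a_i\Delta f$'' and ``$-\nabla a\cdot\nabla f$'' contradict your own correct final accounting and should be fixed, but they are not the real issue.)

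The genuine gap is in your treatment of the cubic-in-$\mathbf{x}_{ji}$ cross terms, i.e.\ (linear part of $a_{ij}$)$\times$(quadratic Taylor remainder of $f_{ij}$) and vice versa. You assign these to Lemma \ref{barbeta}, but that lemma requires $|\alpha|\geq 3$ for the exponent multiplying $\nabla_iW_{ij}^{\beta}$, which corresponds to total degree $\geq 4$ against $Y_{ij}$; your cubic terms have total degree $3$, so after converting one power via \eqref{co-linear} only $|\alpha|=2$ remains and the lemma's hypothesis fails (in $d=3$ the exponent $(1,1,1)$ cannot even be written as $\alpha'+2\bar\beta$). Worse, even where the sign trick goes through, the surplus factor left over after consuming $\mathbf{x}_{ji}^{2\bar\beta}$ is $|\mathbf{x}_{ji}^{\gamma-2\bar\beta}|$ with $|\gamma-2\bar\beta|=1$, so this route only yields $O(h)$, not $O(h^2)$ — and you cannot instead pull the coefficients out and apply \eqref{cond2-2} directly, because in your expansion those cubic terms carry $j$-dependent coefficients $D^\nu f(\xi_j)$ (resp.\ $D^\mu a(\eta_j)$). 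The repair is to expand one order deeper: write both $a_{ij}$ and $f_{ij}$ through their second-order Taylor terms with coefficients at $\mathbf{x}_i$ plus remainders, so that every degree-$3$ product has constant coefficients and is killed to $O(h^2)$ by \eqref{cond2-2} with $|\alpha|=2$ (where $\bar\gamma_{\alpha,\beta}=0$), while every $j$-dependent remainder product has degree $\geq 4$ and is then legitimately handled by Lemma \ref{barbeta}. This is exactly the bookkeeping the paper builds into its splitting $\hat R = R_1+R_2$, $f_{ji}=F_1+F_2$, which makes $F_1R_1$ the only degree-$3$ product and gives it constant coefficients; the hypotheses $a\in C^3$, $f\in C^4$ are sufficient for the deeper expansion.
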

\begin{proof}
Denote $\hat{R}=(a_i+a_j)-(2a_i+\nabla a_i\cdot \mathbf{x}_{ji}).$ Then the error in \eqref{approx_grad3} can be reformulated as follows
\begin{equation}\label{EE}
\sum_j v_j f_{ij} \frac{(a_i+a_j)\mathbf{x}_{ij}\cdot \nabla_iW_{ij}}{\|\mathbf{x}_{ij}\|^2}-\mathcal{L} f(\mathbf{x}_i)
=E_1+E_2+E_3,
\end{equation}
where $$E_1=a_i(\sum_j v_j f_{ji}Y_{ij}-\Delta f_i),\ E_2=\nabla a_i\cdot (\sum_j v_j f_{ji}\nabla_iW_{ij}-\nabla f_i),\
E_3=\sum_j v_j f_{ji}\hat{R}\frac{Y_{ij}}{2}.$$
From Theorem \ref{th3.2}, we can obtain
\begin{align}\label{E1}
|E_1|\leq Ch^2\|a\|_{C^0(\Omega)}\|f\|_{C^4(\Omega)}.
\end{align}
And it follows from Theorem \ref{th3.1} that
\begin{align}\label{E2}
  |E_2| \leq Ch^2\|a\|_{C^1(\Omega)}\|f\|_{C^3(\Omega)}.
\end{align}
Applying the Taylor expansion for $a(\mathbf{x})$ at $\mathbf{x}_i$ gives
\begin{align}
  \hat{R}=\frac{1}{2!}\sum_{|\alpha|=2}D^\alpha a_i \mathbf{x}_{ji}^\alpha
+\frac{1}{3!}\sum_{|\alpha|=3} D^\alpha a(\xi_j) \mathbf{x}_{ji}^\alpha:=R_1+R_2.
\end{align}
Similarly, there holds
\begin{align*}
f_{ji}= \sum_{|\alpha|=1}D^\alpha f_i \mathbf{x}_{ji}^\alpha +\frac{1}{2!}\sum_{|\alpha|=2}D^\alpha f(\xi_j) \mathbf{x}_{ji}^\alpha:=F_1+F_2.
\end{align*}
Thus, we can written $E_3$ as follows
\begin{align}\label{E33}
E_3&=\sum_j v_j F_1R_1\frac{Y_{ij}}{2}+\sum_j v_j F_1R_2\frac{Y_{ij}}{2}+\sum_j v_j F_2R_1\frac{Y_{ij}}{2}+\sum_j v_j F_2R_2\frac{Y_{ij}}{2}\nonumber\\
&:=A_1+A_2+A_3+A_4.
\end{align}
Now we use  inequality \eqref{cond2-2} to estimate $A_1$ as follows:
\begin{align}\label{A1}
|A_1|=|\frac{1}{2!}\sum_{|\alpha|=2,|\beta|=1}D^\alpha a_iD^\beta f_i\sum_j v_j \mathbf{x}_{ji}^\alpha\mathbf{x}_{ji}^\beta\frac{Y_{ij}}{2}|\leq Ch^2\|a\|_{C^2(\Omega)}\|f\|_{C^1(\Omega)}.
\end{align}
By using \eqref{Laplace_b} and \eqref{cond2-11}, we have
\begin{align}\label{A2}
|A_2|&=|\frac{1}{3!}\sum_{|\alpha|=3,|\beta|=1}D^\beta f_i\sum_j v_j D^\alpha a_i(\xi_j)  \mathbf{x}_{ji}^\alpha\mathbf{x}_{ji}^\beta\frac{Y_{ij}}{2}|\nonumber \\
&\leq\frac{1}{3!}\sum_{|\alpha|=3,|\beta|=1}|D^\beta f_i|\max_j\{|D^\alpha a_i(\xi_j)|\big|\mathbf{x}_{ji}^{\alpha+\beta-2\bar{\beta}}\big|\}\sum_j  v_j \mathbf{x}_{ji}^{\bar{\beta}}\nabla_iW_{ij}^{\bar{\beta}}\\
&\leq Ch^2\|a\|_{C^3(\Omega)}\|f\|_{C^1(\Omega)}.\nonumber
\end{align}
Similarly, we have
\begin{align}\label{A3}
|A_3| \leq Ch^2\|a\|_{C^2(\Omega)}\|f\|_{C^2(\Omega)}, \quad \mbox{and}\quad
|A_4| \leq Ch^3\|a\|_{C^3(\Omega)}\|f\|_{C^2(\Omega)}.
\end{align}
Combining the estimates of all $A_i,\  i=1,2,3,4$, we have
\begin{align}\label{E3}
|E_3|\leq Ch^2\|a\|_{C^3(\Omega)}\|f\|_{C^2(\Omega)}.
\end{align}
Substituting \eqref{E1}-\eqref{E2} and \eqref{E3} into  \eqref{EE}, we can derive the estimation given in \eqref{approx_grad3}. This concludes the proof.
\end{proof}

By applying the aforementioned analysis techniques to the truncation errors in function value particle approximations, we derive the following corollary. 
\begin{corollary}[Function approximation]\label{corollary}
Assume that $f \in C^2(\Omega )$.
If
\begin{align}\label{approx_function_cond}
|\sum_j v_j W_{ij}-1|\leq C h^2, |\sum_j v_j   \mathbf{x}_{ji}^\beta W_{ij}|\leq C h^2 (\forall |\beta|=1)  \text{ and }   v_j \geq 0,
\end{align}
then
\begin{align}\label{approx_function}
\|\sum_j v_j f_{j} W_{ij}- f(\mathbf{x}_i)\|_\infty \leq Ch^2\|f\|_{C^2(\Omega)}.
\end{align}
\end{corollary}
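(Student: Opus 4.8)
The plan is to mirror the Taylor-expansion argument used in Theorem~\ref{th3.1} and Theorem~\ref{th3.2}, but now only expand $f$ to second order. First I would write the Taylor expansion of $f(\mathbf{x}_j)$ about $\mathbf{x}_i$ as
\begin{align*}
f(\mathbf{x}_j)=f(\mathbf{x}_i)+\sum_{|\alpha|=1}D^\alpha f_i\,\mathbf{x}_{ji}^\alpha+\frac{1}{2!}\sum_{|\alpha|=2}D^\alpha f(\xi_j)\,\mathbf{x}_{ji}^\alpha,
\end{align*}
with $\xi_j$ between $\mathbf{x}_i$ and $\mathbf{x}_j$. Multiplying both sides by $v_j W_{ij}$ and summing over $j$ gives
\begin{align*}
\sum_j v_j f_j W_{ij}=f_i\sum_j v_j W_{ij}+\sum_{|\alpha|=1}D^\alpha f_i\sum_j v_j\mathbf{x}_{ji}^\alpha W_{ij}+\frac{1}{2!}\sum_{|\alpha|=2}\sum_j v_j D^\alpha f(\xi_j)\mathbf{x}_{ji}^\alpha W_{ij}.
\end{align*}

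Next I would subtract $f_i$ and regroup: the first term contributes $f_i(\sum_j v_j W_{ij}-1)$, which is bounded by $Ch^2\|f\|_{C^0}$ using the first hypothesis in \eqref{approx_function_cond}; the linear term is bounded by $Ch^2\|f\|_{C^1}$ using the second hypothesis $|\sum_j v_j\mathbf{x}_{ji}^\beta W_{ij}|\le Ch^2$ for each $|\beta|=1$. For the remainder term I would use $v_j\ge 0$, the compact support condition $W_{ij}=0$ for $\|\mathbf{x}_{ij}\|\ge h$ (so $|\mathbf{x}_{ji}^\alpha|\le h^2$ on the support when $|\alpha|=2$), and the estimate $\sum_j v_j W_{ij}\le 1+Ch^2\le C$ together with $W_{ij}\ge 0$ (which follows from the decay and normalization conditions, or may simply be assumed as in the paper's kernels), to get
\begin{align*}
\Big|\frac{1}{2!}\sum_{|\alpha|=2}\sum_j v_j D^\alpha f(\xi_j)\mathbf{x}_{ji}^\alpha W_{ij}\Big|\le \frac{1}{2}\sum_{|\alpha|=2}\max_j\{|D^\alpha f(\xi_j)|\,|\mathbf{x}_{ji}^\alpha|\}\sum_j v_j W_{ij}\le Ch^2\|f\|_{C^2(\Omega)}.
\end{align*}
Collecting the three bounds and taking the supremum over interpolation points $i$ yields \eqref{approx_function}.

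I do not expect a serious obstacle here, since the function-value case is genuinely easier than the derivative cases: the kernel $W_{ij}$ is bounded by $Ch^{-d}$ rather than $h^{-d-1}$, the weights $v_j$ satisfy $\sum_j|v_j|\le Ch^d$, and there is no need to invoke Lemma~\ref{barbeta} to redistribute powers of $\mathbf{x}_{ji}$ onto $\nabla_i W_{ij}$. The only point requiring a little care is the bound on the remainder: one must ensure the partition-of-unity-type sum $\sum_j v_j W_{ij}$ stays bounded, which is immediate from the first assumed condition, and that the positivity $v_j\ge0$ (plus $W_{ij}\ge0$) lets one pass the absolute value inside the sum without cancellation issues. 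This is exactly the same mechanism used at the end of the proofs of Theorems~\ref{th3.1} and~\ref{th3.2}, so the corollary follows by a direct, shorter repetition of that argument.
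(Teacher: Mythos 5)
Your proposal is correct and follows essentially the same route as the paper's own proof: Taylor expansion to first order with a second-order remainder, regrouping so the first two hypotheses in \eqref{approx_function_cond} control the constant and linear terms, and bounding the remainder by pulling out $\max_j\{|D^\alpha f(\xi_j)|\,|\mathbf{x}_{ji}^\alpha|\}$ against the nonnegative sum $\sum_j v_j W_{ij}\leq 1+Ch^2$. Your observation that Lemma~\ref{barbeta} is not actually needed here is accurate (the paper cites it, but its displayed estimate is just the direct bound you wrote).
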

\begin{proof}
Applying the Taylor expansion for $f(\mathbf{x})$ at $\mathbf{x}_i$ gives
\begin{align}\label{appTylor}
f(\mathbf{x}_j)=f(\mathbf{x}_i)+ \sum_{|\alpha|=1} D^\alpha f_i \mathbf{x}_{ji}^\alpha +\frac{1}{2!}\sum_{|\alpha|=2} D^\alpha f(\xi_j) \mathbf{x}_{ji}^\alpha,
\end{align}
where $\xi_j$ is a point between $\mathbf{x}_i$ and $\mathbf{x}_j$.
Multiplying $v_j W_{ij}$ on both sides of \eqref{appTylor} and summing over $j$, it follows that
\begin{align}\label{appeeqq1}
\sum_j v_j f_j  W_{ij}
=&f_i\sum_j v_j  W_{ij} +\sum_{|\alpha|=1}D^\alpha f_i\sum_j v_j  \mathbf{x}_{ji}^\alpha  W_{ij} +\frac{1}{2!}\sum_{|\alpha|=2}\sum_j D^\alpha f(\xi_j) v_j   \mathbf{x}_{ji}^\alpha  W_{ij}.
\end{align}
Combining \eqref{appeeqq1} with $a$ and  equation \eqref{eeqq1} yields
\begin{align}\label{appeeqq1_new}
\sum_j v_j f_{j}  W_{ij} - f_i=&f_i(\sum_j v_j W_{ij}-1) +\sum_{|\alpha|=1}D^\alpha f_i\sum_j v_j  \mathbf{x}_{ji}^\alpha  \nabla_iW_{ij}^\beta \nonumber \\
&+\frac{1}{2!}\sum_{|\alpha|=2}\sum_j D^\alpha f(\xi_j) v_j   \mathbf{x}_{ji}^\alpha  W_{ij}.
\end{align}
By utilizing Lemma \ref{barbeta}, the last term on the right-hand side of \eqref{appeeqq1_new} can be estimated as follows
\begin{align}\label{appeeqq2-3}
|\frac{1}{2!}\sum_{|\alpha|=2}\sum_j D^\alpha f(\xi_j) v_j  \mathbf{x}_{ji}^\alpha  W_{ij}|
&\leq \frac{1}{2!}\sum_{|\alpha|=2}\sum_j v_j|D^\alpha f(\xi_j)|  |\mathbf{x}_{ji}^{\alpha}| W_{ij}\nonumber\\
&\leq \frac{1}{2!}\sum_{|\alpha|=2}\max_j\{|D^\alpha f(\xi_j)||\mathbf{x}_{ji}^{\alpha}|\}\sum_j v_j  W_{ij}.
\end{align}
Computing absolute values on both sides of \eqref{appeeqq1_new} and applying \eqref{approx_function_cond}, we can get
\begin{align}\label{appeeqq3}
|\sum_j v_j f_{j}W_{ij} -f_i|\leq  Ch^2\|f\|_{C^2(\Omega)}.
\end{align}
Finally, \eqref{appeeqq3} allows us to directly derive \eqref{approx_function}, which concludes the proof.
\end{proof}
\section{Volume reconstruction for SPH}\label{Sec4}
In traditional SPH method \cite{liu2010smoothed}, the  volume microelement $v_j$ in \eqref{laplacescheme} and \eqref{gradientscheme} is typically defined as the ratio of mass $m_j$ to density $\rho_j$, represented as  $v_j={m_j}/{\rho_j}$.
In the case of irregular particle distribution, the fulfillment of the conditions specified in Theorem \ref{th3.1} and Theorem \ref{th3.2} may no longer be guaranteed, requiring the development of novel approaches to tackle this challenge. In this section, we address this issue by reconstructing the volume microelement $v_j$ to satisfy the regularity conditions, followed by the presentation of the fully discrete scheme for the variable coefficient Poisson equation \eqref{eq1.1}.

Let $\Gamma_i = \{ j \mid W_{ij} > 0 \}$ denote the set of neighboring particle labels within the support of the kernel function centered at $\mathbf{x}i$.  The associated volumes of neighboring particle are $\{v_j\}_{j\in\Gamma_i}$. 
For each particle $i$, we reconstruct the particle volume $\{v_j\}_{j\in\Gamma_i}$ as $\{V_j\}_{j\in\Gamma_i}$ by minimizing the regularity condition. Specifically, we determine the  $\{V_j\}_{j\in\Gamma_i}$ by   minimize the optimization problems for the gradient and Laplacian operators, respectively:
\begin{align}
    \min_{ \{ V_j\}_{j\in \Gamma_i} } \sum_{\beta} \sum_{\alpha} \left| \sum_{j} V_j \mathbf{x}_{ji}^\alpha \nabla_i W_{ij}^\beta - \gamma_{\alpha,\beta} \right|,\quad \mbox{subject to}\quad V_j \geq 0,
    \label{min_equation}
\end{align}
\begin{align}
    \min_{ \{ V_j\}_{j\in \Gamma_i} } \sum_{\beta} \sum_{\alpha} \left| \sum_{j} V_j \mathbf{x}_{ji}^\alpha \nabla_i W_{ij}^\beta - \bar{\gamma}_{\alpha,\beta} \right|,\quad \mbox{subject to}\quad V_j \geq 0,
    \label{min_equation2}
\end{align}
where   \( \mathbf{x}_{ji}^\alpha \) denotes the \(\alpha\)-th component of the relative position vector \( \mathbf{x}_j - \mathbf{x}_i \),
 \( \nabla_i W_{ij}^\beta \) is the \(\beta\)-th component of the kernel gradient,
 \( \gamma_{\alpha,\beta} \) and \( \bar{\gamma}_{\alpha,\beta} \) are target consistency terms for the gradient and Laplacian approximations, respectively.
And the value of $V_j$ depends on $i$.
Taking the one-dimensional case as an example, equation \eqref{min_equation2} can be reformulated as the following non-negative linear least squares problem
\begin{align}\label{LS}
  \text{minimize } & \quad\|AV - b\|^2,\quad \mbox{subject to}\quad V \geq 0.
\end{align}
Here,
\begin{equation*}
A=\left[
  \begin{array}{ccc}
    \cdots &\nabla_iW_{ij} & \cdots  \\
   \cdots &\mathbf{x}_{ji} \nabla_iW_{ij}& \cdots  \\
\cdots &\mathbf{x}_{ji} \mathbf{x}_{ji} \nabla_iW_{ij}& \cdots  \\
  \end{array}
\right]
,  V=
\left[
  \begin{array}{c} 
    \vdots \\
      V_j\\
      \vdots
  \end{array}
\right], b=\left[
   \begin{array}{c}
0 \\
1 \\
0 \\
   \end{array}
 \right].
\end{equation*}
The column dimension of matrix $A$ is determined by $|\Gamma_i|$, where $|\Gamma_i|$ represents the count of neighboring particles in the interaction range of particle $i$.
The residual error in optimization problem \eqref{LS} should be reduced to  $O(h^4)$ to ensure the second-order convergence accuracy of VRSPH. In practice, for a given particle distribution with an appropriate choice of $h$, the residual error can reach machine precision (typically around $10^{-14}$). If an excessively small $h$ is selected for irregular particle distributions, the resulting the column rank of matrix $A$ is too small, violating the regularity condition and ultimately leading to order reduction in the overall solution.  Subsequently, we will introduce the concept of the covering radius to guide the setting of $h$.

We now establish notations of particle distribution and determine the value of $h$ based on the average particle radius and the covering radius.
For any $N\in\mathbb{N}$, define a particle distribution $X_N$ as
$X_N:=\{\mathbf{x}_i\in\bar{\Omega}; i=1,2,\cdots,N, x_i\neq x_j(i\neq j)\}.$
Denote $N_{in}$ as the total number of interior particles. Thus the interior particles can be expressed as $\{\mathbf{x}_i\in\bar{\Omega}/\partial \Omega; i=1,2,\cdots,N_{in} \}$ and the boundary particles are $\{\mathbf{x}_i\in \partial \Omega; i=N_{in}+1,N_{in}+2,\cdots,N \}.$
The covering radius of $X_N$ is defined as
\begin{align}\label{covering_radius}
\displaystyle r_N=\min \{r\in \mathbb{R}| \bigcup_i^N B(\mathbf{x}_i,r)  \supseteq \Omega\},
\end{align}
where $B(\mathbf{x}_i,r)$ denotes a ball centered at $\mathbf{x}_i$ with radius $r$,  with a similar definition noted in literature \cite{imoto2020truncation}.  
Define the average particle spacing of $X_N$ by $\Delta x = {|\Omega|^{\frac{1}{d}}}/{N^{\frac{1}{d}}},$ where $|\Omega|$ is the volume of the computational region. 
To better illustrate the relationship between covering radius and particle distribution irregularity, we present the schematic diagram at Figure \ref{fig:coverage-radius}. 
\begin{figure}[!htbp]
    \centering
    \begin{minipage}[t]{0.4\textwidth}
        \centering
        \includegraphics[width=\linewidth]{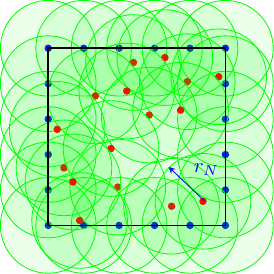} 
    \end{minipage} 
    \begin{minipage}[t]{0.33\textwidth}
        \centering
        \includegraphics[width=\linewidth]{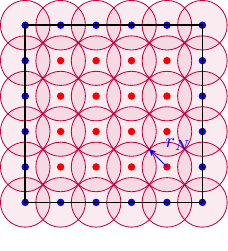} 
    \end{minipage}
    \hfill
    \begin{minipage}[t]{0.7\textwidth}
        \centering
        \includegraphics[width=\linewidth]{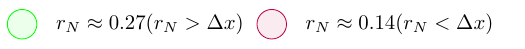} 
    \end{minipage}
    \caption{Schematic diagram of covering radius with $\Delta x = 0.2$ (left: random distribution; right: uniform distribution)}
    \label{fig:coverage-radius}
\end{figure}
By comparing covering radius for different particle distributions with identical particle numbers, we demonstrate that more irregular distributions correspond to larger covering radius. 
In SPH, $h$ is usually set to be $\kappa \Delta x$ where $\kappa$ is a constant depending on the smoothing function. In VRSPH, in order to assure that the minimum value of the optimization problems \eqref{min_equation} and \eqref{min_equation2} is $O(h^2)$, we set
\begin{align}\label{h}
h=\kappa\max\{\Delta x, r_N\}.
\end{align}
In a uniform particle distribution, a decrease in the average particle radius $\Delta x$ implies a reduction in interparticle spacing, thereby driving the truncation error asymptotically toward zero. 
Conversely, in a relatively disordered particle distribution, merely reducing the $\Delta x$ does not guarantee convergence of the truncation error. In such cases, the concept of covering radius $r_N$ (4.4) must be introduced to characterize the spatial properties of the particle arrangement. Crucially, it is the reduction of $r_N$—not just $\Delta x$—that ensures the truncation error approaches zero.
If the particles are uniformly distributed, $r_N$ is less than $\Delta x$. If the particle distribution is irregular, $r_N$ may be greater than $\Delta x$. 
In the latter scenario, an efficient estimation of \( r_N \) can be achieved via a lightweight computational procedure. In this work, we approximate \( r_N \) in terms of the maximum edge length \( l_{\text{max}} \) (commonly referred to as the mesh size) of a simplicial mesh constructed from the particle distribution. The approximation strategy consists of the following steps: first, a simplicial mesh is generated from the given set of particles using Delaunay triangulation. We then propose the estimate
\begin{align}
r_N \approx \beta \cdot l_{\text{max}},\label{Approxr_N}
\end{align}
where the factor \( \beta \) denotes the minimum ratio of the circumradius to the edge length over all simplices in the mesh. The values of \( \beta \) are given by \( \beta = 1/2 \) for a segment (1-simplex), \( \beta = 1/\sqrt{3} \) for a triangle (2-simplex), and \( \beta = \sqrt{6}/4 \) for a tetrahedron (3-simplex).
 
Once  $V_j$  is determined, the particle approximation in the SPH method can be immediately constructed. Specifically, by substituting $V_j$   into the conventional SPH approximation formulation in place of  $v_j$, a high-precision particle approximation is achieved. For instance, using  $V_j$, we can derive a function and its first and second derivatives approximation  as follows:
\begin{align} 
\text{Function} &&\langle f(\mathbf{x}_i)\rangle_{VRSPH}&=\sum_j V_jf_{j}  W_{ij} ,\\
\text{Gradient} &&\langle\nabla f(\mathbf{x}_i)\rangle_{VRSPH}&=-\sum_j V_jf_{ij} \nabla_iW_{ij} ,\\
 \text{Laplacian} &&\langle \Delta \mathbf{A}(\mathbf{x}_i)\rangle_{VRSPH}&= 2\sum_j V_j\mathbf{A}{ij}\frac{ \mathbf{x}{ij}\cdot \nabla_iW_{ij} }{|\mathbf{x}_{ij}|^2}. \label{LaplacianVRSPH}
\end{align} 
To enhance clarity, we summarize the Laplacian approximation of VRSPH method in Algorithm \ref{alg:VRSPH_Laplacian}.
\begin{algorithm}[ht] 
\caption{VRSPH method for Laplacian Approximation}
\label{alg:VRSPH_Laplacian}
\begin{algorithmic}[1]
\Require Particle positions $X_N$, field values $\{\mathbf{A}(\mathbf{x}_i)\}_{i=1}^N$
\Ensure Approximated Laplacian $\langle \Delta \mathbf{A}(\mathbf{x}_i)\rangle_{\text{VRSPH}}$ for all particles
\State Estimate covering radius $r_N$ by $\Delta x$ or using \eqref{Approxr_N}
\State Compute kernel influence radius $h$ using Equation~\eqref{h}
\For{each particle $i = 1, 2, \cdots, N$}
    \State Reconstruct neighbor volumes $\{V_j\}$ by solving optimization problem~\eqref{min_equation2}
    \State Compute Laplacian approximation $\langle \Delta \mathbf{A}(\mathbf{x}_i)\rangle_{\text{VRSPH}}$ using Equation~\eqref{LaplacianVRSPH} with reconstructed volumes
\EndFor
\State \Return $\langle \Delta \mathbf{A}(\mathbf{x}_i)\rangle_{\text{VRSPH}}$ for all particles
\end{algorithmic}
\end{algorithm}
The VRSPH approximation for the function and the first-order derivative is omitted here, as they are similar to the Laplacian approximation.

Now we can construct the following numerical scheme to solve \eqref{eq1.1}-\eqref{Dirichlet}: For $1\leq i\leq N$, given $a_i, f_i$, compute grid function $U=\{U_i\}_{i=1}^N$  by
\begin{align}\label{scheme}
\begin{array}{rl}
\mathcal{L}_hU_i:=\sum\limits_{1\leq j \leq N} V_jU_{ij}\frac{(a_i+a_j)\mathbf{x}_{ij}\cdot \nabla_iW_{ij}}{\|\mathbf{x}_{ij}\|^2}&= f_i, \quad 1\leq i \leq N_{in},\\
U_i&=0, \quad N_{in}+1\leq i \leq N,
\end{array}
\end{align}
where $U_{ij}=U_i-U_j$ and $V_j$ is determined by the optimization problem \eqref{min_equation2}. 
By using \eqref{scheme}, we can define  the discrete Laplacian matrix $L$ as follows.
\begin{align}\label{Ldefinition}
  L&=\left(
      \begin{array}{cccc}
        -\sum_{j\neq 1} a_{1,j} & a_{1,2} & \cdots & a_{1,N_{in}} \\
        a_{2,1} & -\sum_{j\neq 2} a_{2,j} &   & a_{2,N_{in}} \\
        \vdots &   & \ddots & \vdots \\
        a_{N_{in},1} & a_{N_{in},2} & \cdots &  -\sum_{j\neq N_{in}} a_{N_{in},j} \\
      \end{array}
    \right),
\end{align}
where 
\begin{align*}
    a_{i,j}= - V_j \frac{(a_i+a_j)\mathbf{x}_{ij}\cdot\nabla_iW_{ij}}{\|\mathbf{x}_{ij}\|^2}
    \quad \forall  i\neq j.
  \end{align*}
In fact, $-L$ being an M-matrix, which allows its inverse to be computed more efficiently and facilitates the construction of numerical schemes based on the maximum bound principle \cite{ju2021maximum}. 
\begin{remark}
 The volume microelements $V_j$, solved through the non-negative linear least squares problem \eqref{LS}, do not represent the actual volumes of particle $j$. Instead, they serve as weights in the integration of the kernel function for particle $i$. Many of the $V_j$ values are zero, contributing to matrix sparsity.
\end{remark}
\begin{remark}
  We emphasize that the second-order convergence of VRSPH is rigorously established with respect to the kernel radius, i.e., $O(h^2)$. 
The convergence behavior may be different with respect to $\Delta x$, depending on the relationship between the $r_N$ and $\Delta x$. 
In  general case where the particle distribution follows $r_N \leq (\Delta x)^p$ with $p \leq 1$, we obtain $h\leq C(\Delta x)^p$. Thus, we can achieve a convergence rate of $O((\Delta x)^{2p})$ with respect to $\Delta x$. 
Numerous particle distributions satisfy the condition $r_N\leq \sqrt{d}\Delta x$, including both uniform distributions and randomly perturbed particles. However, for completely random particle arrangements where $p<1$ typically holds, order reduction is considered acceptable for such highly irregular distributions. 
\end{remark}

\begin{remark}
The choice of \(h\) in Equation (4.5) is motivated by theoretical rigor and analytical convenience. In practice, this condition can be relaxed suitably. First, if prior information is available (e.g., for particle configurations that are randomly perturbed or uniformly distributed such that \(r_N \le \sqrt{d}\,\Delta x\)), computing \(r_N\) explicitly can be avoided entirely. Second, even in the absence of such prior information, the selection of \(h\) need not depend on the exact value of \(r_N\); a rough estimate of an upper bound for \(r_N\) can be used instead.
\end{remark}

\section{Error estimates  for the variable coefficient Poisson equation}\label{Sec5}
In this section, we present the error analysis of \eqref{scheme} for solving the variable coefficient Poisson equation \eqref{eq1.1}-\eqref{Dirichlet} with irregular particle distributions. We first rigorously demonstrate that with irregular particle distributions, the discrete maximum principle is preserved for the given fully discretized scheme \eqref{scheme}. Subsequently, we establish an error estimation in the infinity norm based on this principle.

\begin{theorem}[Discrete Maximum Principle] \label{DMP}
Assume that the smoothing kernel function $W(\mathbf{x},h)$ satisfies the decay condition.
Consider a grid function $U=\{U_i\}_{i=1}^N$ and $ a_i>0\ (1\leq i\leq N)$. If the discrete Laplace operator $\mathcal{L}_h$ satisfies
\begin{align}
\mathcal{L}_h U_i =\sum_{1\leq j \leq N} V_jU_{ij}\frac{(a_i+a_j)\mathbf{x}_{ij}\cdot \nabla_iW_{ij}}{\|\mathbf{x}_{ij}\|^2}\geq 0, \quad 1\leq i\leq N_{in},
\end{align}
then $U$ attains its maximum on the boundary. On the other hand, if $ \mathcal{L}_h U_i\leq 0$ for any  $1\leq i \leq N_{in} $,  then $U$ attains its minimum on the boundary.
\end{theorem}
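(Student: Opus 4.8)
The plan is to recast the scheme \eqref{scheme} as a discrete sub-mean-value inequality and then run the standard propagation argument for discrete maximum principles; the only non-routine input is the sign structure of the stencil, which is supplied by the decay condition and by the constraint $V_j\ge 0$ in the volume reconstruction. Concretely, writing $a_{i,j}=-V_j(a_i+a_j)(\mathbf{x}_{ij}\cdot\nabla_iW_{ij})/\|\mathbf{x}_{ij}\|^2$ for $i\neq j$ as in \eqref{Ldefinition}, Lemma~\ref{lem1-2} gives $\mathbf{x}_{ij}\cdot\nabla_iW_{ij}\le 0$ under the decay condition, while $V_j\ge 0$ by the constraint in \eqref{min_equation2} and $a_i,a_j>0$ by assumption; hence $a_{i,j}\ge 0$ for all $i\neq j$. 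Since $U_{ii}=0$ we may rewrite $\mathcal{L}_hU_i=\sum_{j\neq i}a_{i,j}(U_j-U_i)$, so that $\mathcal{L}_hU_i\ge 0$ becomes $\big(\sum_{j\neq i}a_{i,j}\big)U_i\le\sum_{j\neq i}a_{i,j}U_j$; when the $i$-th stencil is not identically zero this displays $U_i$ as bounded above by a convex combination of the neighboring values, i.e. $U_i\le\max\{U_j:a_{i,j}>0\}$.

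Next I would carry out the propagation argument. Set $M=\max_{1\le k\le N}U_k$; if $M$ is already attained at a boundary particle there is nothing to prove, so assume $U_{i_0}=M$ for some interior $i_0$. In the identity $\mathcal{L}_hU_{i_0}=\sum_{j\neq i_0}a_{i_0,j}(U_j-U_{i_0})$ every summand is nonpositive (since $a_{i_0,j}\ge0$ and $U_j\le M=U_{i_0}$) while the total is $\ge0$ by hypothesis, so each summand vanishes; hence $U_j=M$ whenever $a_{i_0,j}>0$. Iterating this along the interaction graph $G$ on $\{1,\dots,N\}$ — with edge $\{i,j\}$ present precisely when $a_{i,j}>0$ — yields $U\equiv M$ on the connected component of $i_0$ in $G$, which then forces a boundary particle to attain $M$. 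The minimum statement follows immediately by applying the result to $-U$: the weights $a_{i,j}$ depend only on particle positions and on $a$, not on $U$, so $\mathcal{L}_h$ is linear and $\mathcal{L}_h(-U)_i=-\mathcal{L}_hU_i$.

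I expect the genuine obstacle to be the connectivity step: propagation only crosses edges with $a_{i,j}>0$, i.e. pairs for which $V_j>0$ and $\mathbf{x}_{ij}\cdot\nabla_iW_{ij}\neq0$. One must therefore guarantee that in $G$ every interior particle is joined to at least one boundary particle; this rests on $\partial_R\hat W<0$ on $(0,1)$ for the kernels in use (so that $\mathbf{x}_{ij}\cdot\nabla_iW_{ij}<0$ strictly for $0<\|\mathbf{x}_{ij}\|<h$, cf. Lemma~\ref{lem1-2}) together with a non-degeneracy property of the reconstructed weights $\{V_j\}$, and is precisely the irreducibility underlying the M-matrix remark following \eqref{Ldefinition}. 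In the write-up I would state this connectivity hypothesis on the particle distribution $X_N$ explicitly before invoking it, so that the propagation step closes cleanly.
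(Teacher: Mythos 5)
Your argument is correct and is essentially the paper's own proof: both rest on the sign structure $a_{i,j}\ge 0$ coming from $V_j\ge 0$, $a_i>0$, and $\mathbf{x}_{ij}\cdot\nabla_iW_{ij}\le 0$ (Lemma~\ref{lem1-2}), followed by the standard propagation of an interior maximum along neighbors until a boundary particle is reached. The one place you go beyond the paper is in flagging the connectivity of the interaction graph (edges where $V_j>0$ and the kernel gradient is nonzero) as an implicit hypothesis needed for the propagation to reach the boundary — the paper simply asserts that "the same argument can be applied finite times until the boundary is reached," so making that assumption explicit, as you propose, is a genuine tightening rather than a deviation.
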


\begin{proof}
We prove the theorem by contradiction. Suppose that the conclusion is
not true, so $U$ has its maximum at an interior grid point $\mathbf{x}_{n_0}$, i.e., $U_{n_0}=\max\limits_{1\leq i \leq N} U_i$ with $1\leq n_0\leq N_{in}$. Then there holds
\begin{equation}\label{M-eq1}
U_{n_0i}:=U_{n_0}-U_{i} \geq 0 , \quad 1\leq i \leq N.
\end{equation}
It follows from Lemma \ref{lem1-2} that
\begin{equation}\label{M-eq2}
x_{n_0j}\cdot\nabla_{n_0}W_{n_0j}\leq 0.
\end{equation}
In view of the fact $a_i>0$ and \eqref{M-eq2}, the inequality
$$ \mathcal{L}_h U_{n_0}=\sum_{1\leq j \leq N} V_jU_{n_0j}\frac{(a_{n_0}+a_j)\mathbf{x}_{n_0j}\cdot \nabla_{n_0}W_{n_0j}}{\|\mathbf{x}_{n_0j}\|^2} \geq 0$$
 contradicts with \eqref{M-eq1}  unless all $U_i$ at the neighbors of $n_0$ have the same value $U_{n_0}$. This implies that $U$ has its maximum at neighboring points of $x_{n_0}$, and the same argument can be applied finite times until the boundary is reached. Then we know that $U$ also has its maximum at boundary particles. We can conclude that if $U$ has its maximum any interior point, then $U$ is a constant.

We can use a similar argument to prove the case of $ \mathcal{L}_h U_i\leq 0$.  Then we complete the proof.
\end{proof}

\begin{corollary}[M-matrix]\label{M-matrix}
Assume that $W(\mathbf{x},h)$ satisfies the decay condition and $a_i>0$ ($1\leq i \leq N$). $L$ is the discrete Laplacian matrix given by \eqref{Ldefinition}.
It follows that $-L$ is a non-singular M-matrix.
\end{corollary}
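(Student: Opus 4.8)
The plan is to deduce Corollary~\ref{M-matrix} directly from the Discrete Maximum Principle (Theorem~\ref{DMP}) together with the standard characterization of (non-singular) M-matrices. Recall that a matrix $M$ is a non-singular M-matrix if and only if $M$ has non-positive off-diagonal entries, is invertible, and $M^{-1}\geq 0$ entrywise; equivalently, $M$ has non-positive off-diagonal entries, non-negative diagonal entries, and is \emph{monotone}, i.e.\ $M\mathbf{w}\geq 0$ implies $\mathbf{w}\geq 0$. So the task reduces to three verifications: the sign pattern of $-L$, the invertibility/monotonicity of $-L$, and (for completeness) that the diagonal of $-L$ is positive.

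First I would check the sign pattern. By definition \eqref{Ldefinition}, the off-diagonal entries of $L$ are $a_{i,j}=-V_j\,(a_i+a_j)\,\mathbf{x}_{ij}\cdot\nabla_iW_{ij}/\|\mathbf{x}_{ij}\|^2$ for $i\neq j$. Since $V_j\geq 0$ (it solves the constrained least-squares problem \eqref{LS}), $a_i+a_j>0$, and Lemma~\ref{lem1-2} gives $\mathbf{x}_{ij}\cdot\nabla_iW_{ij}\leq 0$ under the decay condition, we get $a_{i,j}\geq 0$, hence the off-diagonal entries of $-L$ are $\leq 0$. The diagonal entry of $-L$ in row $i$ is $\sum_{j\neq i}a_{i,j}\geq 0$; one should note it is actually strictly positive for interior rows whenever there is at least one neighbor $j$ with $V_j>0$ and $\mathbf{x}_{ij}\cdot\nabla_iW_{ij}<0$, which holds because the least-squares solution must be nontrivial to meet the regularity condition (so $-L$ has positive diagonal).

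Next comes monotonicity, which is where the Discrete Maximum Principle does the real work. Suppose $-L\mathbf{w}\geq 0$ for some vector $\mathbf{w}=(w_1,\dots,w_{N_{in}})^{\!\top}$. Extend $\mathbf{w}$ to a grid function $U$ on all $N$ particles by setting $U_i=w_i$ for $1\leq i\leq N_{in}$ and $U_i=0$ on the boundary; then for interior indices $\mathcal{L}_hU_i = (L\mathbf{w})_i \leq 0$ (the boundary values being zero contribute nothing beyond what is already encoded in $L$, since $a_{i,j}U_j=0$ there). By Theorem~\ref{DMP} applied to the case $\mathcal{L}_hU_i\leq 0$, $U$ attains its minimum on the boundary, so $U_i\geq 0$ for all $i$, i.e.\ $\mathbf{w}\geq 0$. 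This proves $-L$ is monotone. In particular, taking $\mathbf{w}$ with $-L\mathbf{w}=0$ forces $\mathbf{w}\geq 0$, and applying the same argument to $-(-L)\mathbf{w} = -L(-\mathbf{w})\geq 0$ (using the maximum part of Theorem~\ref{DMP}) forces $-\mathbf{w}\geq 0$, hence $\mathbf{w}=0$; this gives injectivity, so $-L$ is non-singular. Combined with the sign pattern, $-L$ is a non-singular M-matrix.

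The main obstacle I anticipate is the bookkeeping around the boundary particles: the matrix $L$ in \eqref{Ldefinition} is the $N_{in}\times N_{in}$ interior block, whereas the Discrete Maximum Principle is phrased for the full grid function $U=\{U_i\}_{i=1}^N$ with $\mathcal{L}_h$ summing over all $1\leq j\leq N$. One must be careful that extending $\mathbf{w}$ by zeros on $\partial\Omega$ makes $(L\mathbf{w})_i$ coincide with $\mathcal{L}_hU_i$ for each interior $i$ — which it does precisely because the dropped columns correspond to $U_j=0$ — and that the diagonal entry $-\sum_{j\neq i}a_{i,j}$ in \eqref{Ldefinition} already includes the contributions of boundary neighbors of particle $i$. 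Once this identification is made cleanly, the rest is the routine M-matrix characterization; no hard estimates are needed, since all the analytic content is absorbed into Theorem~\ref{DMP} and Lemma~\ref{lem1-2}.
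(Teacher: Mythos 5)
Your proposal is correct and follows essentially the same route as the paper: both establish that $-L$ is monotone by extending the interior vector by zeros on the boundary, applying the Discrete Maximum Principle (Theorem~\ref{DMP}) to conclude non-negativity, and then invoking the fact that a monotone Z-matrix is a non-singular M-matrix. Your version merely spells out the Z-matrix sign pattern (via Lemma~\ref{lem1-2} and $V_j\geq 0$) and the injectivity argument more explicitly than the paper does.
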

\begin{proof}
Let $\bar{U}=[U_1,\cdots,U_{N_{in}}]^T$.
$(-L)_i$ denotes the $i$th row of the matrix $(-L)$. In view of the fact $U_i=0$ if $N_{in}+1\leq i \leq N$, there holds
$$(-L)_i\bar{U}=-\mathcal{L}_h U_i, 1\leq i\leq N_{in}.$$
If $(-L)\bar{U}\geq 0$, then $\mathcal{L}_h U_i\leq 0$ for any $1\leq i \leq N_{in}$. By using Theorem \ref{DMP}, we have $U$  attains its minimum on the boundary, i.e.,
$$\bar{U}\geq \min\limits_{1\leq i\leq N_{in}} U_i = \min_{N_{in}+1 \leq i\leq N} U_i =0.$$
 In other words, we can obtain $\bar{U}\geq 0$ from $(-L)\bar{U}\geq 0$, which means $-L$ is monotonic.
Since monotone real Z-matrices are known to be nonsingular M-matrices, it follows that $-L$ must also be an M-matrix. The proof is complete.
\end{proof}

To establish the error estimation, we need to have the following lemma.
\begin{lemma}\label{DMP2}
Assume that $\Omega=[0,l]^d$, $W(\mathbf{x},h)$ satisfies the decay condition and  $a(\mathbf{x})\geq a_{\min}> 0$.
Let $U$ be a grid function that satisfies
$$ \mathcal{L}_h U_i=\sum_{1\leq j \leq N} V_jU_{ij}\frac{(a_i+a_j) \mathbf{x}_{ij}\cdot \nabla_iW_{ij}}{\|\mathbf{x}_{ij}\|^2}=f_i, \quad 1\leq i\leq N_{in}$$
with an homogeneous Dirichlet boundary condition.
Under the conditions of Theorem \ref{th3.3} and assuming $h$ satisfies the following condition
\begin{align}\label{small_h}
Ch^2\|a\|_{C^3(\Omega)}\|f\|_{C^4(\Omega)}\leq 1,
\end{align}
we have
\begin{align}\label{conclu_DMP2}
\|U\|_\infty=\max\limits_{1\leq i\leq N}|U_i|\leq e^{\frac{\|a\|_{C^1(\Omega)}+2}{a_{\min}} l}  \|f\|_\infty,
\end{align}
where $C$ is a constant given in \eqref{approx_grad3}.
\end{lemma}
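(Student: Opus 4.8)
The plan is to run a discrete barrier (comparison-function) argument, the discrete analogue of the classical a priori $L^\infty$ estimate for second-order elliptic equations, using the discrete maximum principle of Theorem \ref{DMP} as the engine. First I would fix the continuous barrier
\begin{equation*}
\phi(\mathbf{x}) := e^{\lambda l} - e^{\lambda x_1}, \qquad \lambda := \frac{\|a\|_{C^1(\Omega)}+2}{a_{\min}},
\end{equation*}
where $x_1$ denotes the first component of $\mathbf{x}$. Note that $\lambda>1$ because $\|a\|_{C^1(\Omega)}\geq a_{\min}$, that $0\leq\phi\leq e^{\lambda l}$ on $\bar\Omega$, and that $\phi\in C^\infty(\bar\Omega)$ with $\|\phi\|_{C^4(\Omega)}$ a finite constant depending only on $a$ and $l$ (hence independent of $h$ and of the particle distribution).

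A short computation gives $\nabla\phi=-\lambda e^{\lambda x_1}\mathbf{e}_1$ and $\Delta\phi=-\lambda^2 e^{\lambda x_1}$, so that
\begin{equation*}
\mathcal{L}\phi(\mathbf{x})=a\,\Delta\phi+\nabla a\cdot\nabla\phi=-\lambda e^{\lambda x_1}\bigl(a\lambda+\partial_{x_1}a\bigr)\leq-\lambda e^{\lambda x_1}\bigl(a_{\min}\lambda-\|a\|_{C^1(\Omega)}\bigr)=-2\lambda e^{\lambda x_1}\leq-2
\end{equation*}
for all $\mathbf{x}\in\Omega$, using $x_1\geq0$ and the definition of $\lambda$. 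Next I would transfer this inequality to $\mathcal{L}_h$: applying the truncation-error estimate of Theorem \ref{th3.3} to $\phi$ gives $|\mathcal{L}_h\phi_i-\mathcal{L}\phi(\mathbf{x}_i)|\leq Ch^2\|a\|_{C^3(\Omega)}\|\phi\|_{C^4(\Omega)}$, and since $\|\phi\|_{C^4(\Omega)}$ is a fixed constant, the smallness hypothesis \eqref{small_h} on $h$ makes this $\leq1$. Therefore $\mathcal{L}_h\phi_i\leq-2+1=-1$ for every interior index $1\leq i\leq N_{in}$.

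With this, the comparison step is straightforward. Setting $M:=\|f\|_\infty$ and $V^{\pm}_i:=\pm U_i-M\phi(\mathbf{x}_i)$, linearity of $\mathcal{L}_h$ together with $|f_i|\leq M$ and $\mathcal{L}_h\phi_i\leq-1$ gives
\begin{equation*}
\mathcal{L}_h V^{\pm}_i=\pm f_i-M\,\mathcal{L}_h\phi_i\geq-M+M=0,\qquad 1\leq i\leq N_{in}.
\end{equation*}
By Theorem \ref{DMP}, each $V^{\pm}$ attains its maximum on $\partial\Omega$; but there $U_i=0$ and $\phi\geq0$, so $V^{\pm}_i=-M\phi(\mathbf{x}_i)\leq0$ on the boundary, whence $V^{\pm}_i\leq0$ for all $i$. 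This yields $\pm U_i\leq M\phi(\mathbf{x}_i)\leq Me^{\lambda l}$, i.e. $\|U\|_\infty\leq e^{\lambda l}\|f\|_\infty$ with $\lambda=(\|a\|_{C^1(\Omega)}+2)/a_{\min}$, which is precisely \eqref{conclu_DMP2}.

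Everything above is routine once the barrier is in hand; the one step needing care is the transfer to $\mathcal{L}_h$. There one must check that the constant coming out of Theorem \ref{th3.3} --- which carries the factor $\|\phi\|_{C^4(\Omega)}$, an $h$-independent quantity determined by $a$ and $l$ --- is genuinely absorbed by hypothesis \eqref{small_h}, and that the specific $\lambda$ chosen above simultaneously produces the exponent appearing in \eqref{conclu_DMP2} and leaves $\mathcal{L}\phi\leq-2$, i.e. a full unit of slack to soak up the $O(h^2)$ discretization error.
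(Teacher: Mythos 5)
Your proposal is correct and is essentially the paper's own argument: the paper uses the same exponential barrier (written as $g(\mathbf{x})=e^{\lambda x}$ with the identical $\lambda=(\|a\|_{C^1(\Omega)}+2)/a_{\min}$, so that $\mathcal{L}g\geq 2$ and $\mathcal{L}_h g\geq 1$ after absorbing the $O(h^2)$ truncation error via Theorem \ref{th3.3}), and then compares $U_i\pm\|f\|_\infty g(\mathbf{x}_i)$ using Theorem \ref{DMP}; your $\phi=e^{\lambda l}-e^{\lambda x_1}$ is just the sign-flipped, shifted version of the same barrier. The subtlety you flag at the end --- that hypothesis \eqref{small_h} involves $\|f\|_{C^4(\Omega)}$ while the truncation error of the barrier carries $\|\phi\|_{C^4(\Omega)}$ --- is glossed over in exactly the same way in the paper's proof, so your treatment matches its level of rigor.
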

\begin{proof}
Define a continuous function $g(\mathbf{x})= e^{\lambda x}$ with $\lambda=\frac{\|a\|_{C^1(\Omega)}+2}{a_{\min}}$.
It follows that
\begin{align}\label{estimate_1}
\mathcal{L}g(\mathbf{x}) &=a \Delta g(\mathbf{x})+\nabla a\cdot \nabla g(\mathbf{x}) =a\lambda^2 e^{\lambda x}+\partial_xa(\mathbf{x})\lambda e^{\lambda x}\nonumber\\
&\geq \lambda e^{\lambda x}(a_{\min}\lambda-\|a\|_{C^1(\Omega)})
\geq 2.
\end{align}
Define an interpolation function $\{w_i\}_{i=1}^N$ associated with $g$ satisfying  $w_i=g(\mathbf{x}_i)$.
By using Theorem \ref{th3.3}, \eqref{small_h} and \eqref{estimate_1} , we have
\begin{align}\label{estimate_2}
\mathcal{L}_h w_i&\geq \mathcal{L} g(\mathbf{x}_i)-Ch^2\|a\|_{C^3(\Omega)}\|f\|_{C^4(\Omega)}\nonumber\\
&\geq 1+(1-Ch^2\|a\|_{C^3(\Omega)}\|f\|_{C^4(\Omega)})\geq 1, \quad \forall 1\leq i \leq N_{in}.
\end{align}
It follows from \eqref{estimate_2} that
$$\mathcal{L}_h (U_i-\|f \|_\infty w_i) = (\mathcal{L}_hU_{i} - \| f\|_\infty)+(1-\mathcal{L}_h w_i) \|f\|_\infty  \leq 0,$$
and
$$\mathcal{L}_h (U_i+\|f \|_\infty w_i) = (\mathcal{L}_hU_{i} + \|f\|_\infty)+(-1+\mathcal{L}_h w_i) \|f\|_\infty\geq 0.$$
Therefore, by using Theorem \ref{DMP}, $\{U_i+\|f \|_\infty w_i\}_{i=1}^N$ has its maximum on the boundary, while $\{U_i-\|f \|_\infty w_i\}_{i=1}^N$ has its minimum on the boundary. In other word, there hold that
$$ U_i-\|f \|_\infty w_i\geq \min\limits_{N_{in}+1\leq i \leq N} (U_i-\|f \|_\infty w_i), \forall 1\leq i \leq N$$
and
$$U_i+\|f \|_\infty w_i\leq \max_{N_{in}+1\leq i \leq N} (U_i+\|f \|_\infty w_i), \forall 1\leq i \leq N.$$
Noting the fact that $w_i\geq 0 (1\leq i\leq N)$ and $U_{i}$ is zero on the boundary, we have
\begin{align}\label{minU}
 U_i\geq U_i-\|f \|_\infty w_i\geq  -\|f \|_\infty \max_{N_{in}+1\leq i \leq N}\|w_i\|_\infty
\end{align}
and
\begin{align}\label{maxU}
U_i\leq  U_i+\|f \|_\infty w_i\leq \|f \|_\infty \max_{N_{in}+1\leq i \leq N}\|w_i\|_\infty.
\end{align}
By the definition of $\{w\}_{i=1}^N$, it follows that
\begin{align}\label{maxw}
  \max_{N_{in}+1\leq i \leq N}\|w_i\|_\infty\leq e^{\frac{\|a\|_{C^1(\Omega)}+2}{a_{\min}} l}.
\end{align}
From \eqref{minU}-\eqref{maxw}, we can obtain
$$ |U_{i}| \leq  e^{\frac{\|a\|_{C^1(\Omega)}+2}{a_{\min}} l} \| f \|_\infty, \forall 1\leq i \leq N.$$
Hence, we get \eqref{conclu_DMP2} and thus complete the proof.
\end{proof}

Through the construction of an auxiliary function $g(\mathbf{x})$, we have demonstrated that $\|U\|_\infty$ is bounded by $\|\mathcal{L}_hU\|_\infty$. Next, we will present the error estimation of the numerical scheme \eqref{scheme}.
\begin{theorem}\label{convergence}
Assume that $\Omega=[0,l]^d$, $u(\mathbf{x})\in C^4(\Omega)$ and $a(\mathbf{x})\in C^3(\Omega)$. Let $u(\mathbf{x})$ and $U$ be the solutions of \eqref{eq1.1} and \eqref{scheme}, respectively.
If regularity condition \eqref{cond2-11} holds and $h$ satisfying \eqref{small_h}, then the error $E_i:=U_i-u(\mathbf{x}_i)$ satisfies:
$$\|E\|_\infty=\max\limits_{1\leq i \leq N}|E_i| \leq Ch^2e^{\frac{\|a\|_{C^1(\Omega)}+2}{a_{\min}} l}\|a\|_{C^3(\Omega)}\|u\|_{C^4(\Omega)},$$
where $C$ is a constant given in \eqref{approx_grad3}.
\end{theorem}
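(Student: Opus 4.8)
The plan is to combine the consistency estimate for the Morris operator (Theorem~\ref{th3.3}) with the stability estimate of Lemma~\ref{DMP2}, applied to the error grid function $E_i = U_i - u(\mathbf{x}_i)$. First I would observe that since $U$ solves the discrete scheme \eqref{scheme} and $u$ solves the continuous problem \eqref{eq1.1}--\eqref{Dirichlet}, the error satisfies a discrete equation driven by the truncation error: for $1 \le i \le N_{in}$,
\begin{align*}
\mathcal{L}_h E_i = \mathcal{L}_h U_i - \mathcal{L}_h (u(\mathbf{x}_i)) = f_i - \mathcal{L}_h(u(\mathbf{x}_i)) = \mathcal{L}u(\mathbf{x}_i) - \mathcal{L}_h(u(\mathbf{x}_i)) =: \tau_i,
\end{align*}
while $E_i = 0$ for $N_{in}+1 \le i \le N$ because both $U$ and $u$ vanish on $\partial\Omega$. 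Here I am using that $\mathcal{L}_h$ acts on the interpolated grid function $\{u(\mathbf{x}_i)\}$ exactly as the Morris-operator particle approximation of Theorem~\ref{th3.3} with $f$ replaced by $u$.

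Next I would bound the truncation error $\tau_i$. Since the hypotheses $u \in C^4(\Omega)$, $a \in C^3(\Omega)$, the decay condition on $W$, and the regularity condition \eqref{cond2-11} are all in force, Theorem~\ref{th3.3} (with $u$ playing the role of $f$) gives immediately
\begin{align*}
\|\tau\|_\infty = \left\|\mathcal{L}_h(u(\mathbf{x}_i)) - \mathcal{L}u(\mathbf{x}_i)\right\|_\infty \le C h^2 \|a\|_{C^3(\Omega)}\|u\|_{C^4(\Omega)}.
\end{align*}
Then I would apply Lemma~\ref{DMP2} to the grid function $E$: it satisfies $\mathcal{L}_h E_i = \tau_i$ on interior particles with homogeneous Dirichlet data, and the smallness condition \eqref{small_h} on $h$ is assumed, so the lemma yields
\begin{align*}
\|E\|_\infty \le e^{\frac{\|a\|_{C^1(\Omega)}+2}{a_{\min}} l}\, \|\tau\|_\infty.
\end{align*}
Combining the last two displays gives exactly the claimed bound.

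The only subtlety — and the step I would be most careful about — is making sure the hypotheses of Lemma~\ref{DMP2} are legitimately met when it is applied to $E$ rather than to $U$ directly. Lemma~\ref{DMP2} is stated with a right-hand side called $f$ and a smallness condition \eqref{small_h} phrased in terms of that same $f$; here the effective right-hand side is $\tau$, not the original source $f$. I would check that the proof of Lemma~\ref{DMP2} only uses $\|f\|_\infty$ as the magnitude of the right-hand side together with the consistency constant from Theorem~\ref{th3.3} (which depends on $\|a\|_{C^3}$ and $\|u\|_{C^4}$, not on $\tau$), so the barrier-function argument with $g(\mathbf{x}) = e^{\lambda x}$, $\lambda = (\|a\|_{C^1(\Omega)}+2)/a_{\min}$, goes through verbatim with $\|f\|_\infty$ replaced by $\|\tau\|_\infty$; the condition \eqref{small_h} is what guarantees $\mathcal{L}_h w_i \ge 1$ for the interpolant $w$ of $g$, independent of which right-hand side we feed in. Once that is confirmed, the rest is the routine chaining of the two inequalities above, and no further estimates are needed.
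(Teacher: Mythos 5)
Your proposal is correct and follows essentially the same route as the paper: define the truncation error $T_i=\mathcal{L}u(\mathbf{x}_i)-\mathcal{L}_h(u(\mathbf{x}_i))$, bound it by Theorem~\ref{th3.3}, subtract to obtain $\mathcal{L}_hE_i=T_i$ with homogeneous boundary data, and invoke the barrier-function stability estimate of Lemma~\ref{DMP2}. Your closing remark about checking that Lemma~\ref{DMP2} applies with the right-hand side $\tau$ in place of $f$ is a point the paper glosses over, and your resolution of it is the right one.
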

\begin{proof}
Denote $T_i$ as the local truncation error at $\mathbf{x}_i$, i.e.,
\begin{align}\label{denoteT}
T_i:=\mathcal{L} u(\mathbf{x}_i)-\sum_{1\leq j\leq N} V_j \Big(u(\mathbf{x}_i)-u(\mathbf{x}_j)\Big) \frac{(a_i+a_j)\mathbf{x}_{ij}\cdot \nabla_iW_{ij}}{\|\mathbf{x}_{ij}\|^2},  \quad 1\leq i \leq N_{in}.
\end{align}
From Theorem \ref{th3.3}, we have $\|T_i\|_\infty\leq Ch^2\|a\|_{C^3(\Omega)}\|u\|_{C^4(\Omega)}$.
It follows from  \eqref{eq1.1} and \eqref{denoteT} that
\begin{equation}\label{Laplace_ui}
\sum_{1\leq j\leq N} V_j \Big(u(\mathbf{x}_i)-u(\mathbf{x}_j)\Big) \frac{(a_i+a_j)\mathbf{x}_{ij}\cdot \nabla_iW_{ij}}{\|\mathbf{x}_{ij}\|^2} = f_{i} - T_{i}, \quad 1\leq i\leq N_{in}.
\end{equation}
Subtracting \eqref{Laplace_ui} from \eqref{scheme}, we obtain
 $\mathcal{L}_h E_{i} = T_{i}.$
By {using} Lemma \ref{DMP2}, we get
$$\|E\|_\infty \leq  e^{\frac{\|a\|_{C^1(\Omega)}+2}{a_{\min}} l} \|T\|_\infty\leq Ch^2e^{\frac{\|a\|_{C^1(\Omega)}+2}{a_{\min}} l}\|a\|_{C^3(\Omega)}\|u\|_{C^4(\Omega)}.$$
The proof is completed.
\end{proof}

\section{Numerical experiments}\label{Sec6}
In this section, we focus on evaluating the particle approximation error of VRSPH for interior particles and compare it with established high-accuracy SPH methods such as FPM, CSPH, and KGF. Additionally, we assess the performance of the new method when the kernel function is truncated by boundaries with uniform particles. We also examine the convergence order of the VRSPH method for solving the variable coefficient Poisson equation. In our experiments, unless otherwise noted, we consider $\Omega=[-1,1]^d$ with $d=1,2.$ The test function $f(\mathbf{x})$ is given as follows:
\begin{align*}
  f(\mathbf{x})= \left\{
          \begin{array}{ll}
            \cos(\pi x), & \hbox{$d=1$;} \\
            \cos(\pi x)\cos(\pi y), & \hbox{$d=2$.}
          \end{array}
        \right.
\end{align*}
And we select the cubic spline kernel \eqref{cubic-spline} as the smoothing function.

\subsection{Accuracy Test for interior particles}
To accurately test the particle approximation accuracy of interior particles, we placed a sufficient number virtual particles outside the boundary to prevent the truncation of the kernel function due to the boundary (as shown in Figure \ref{irr2D}).
\begin{figure}[!h]
  \centering
\begin{tikzpicture}
\draw (-5,0) -- (5,0);
\foreach \x in {-5,-4,-3,-2,-1,0,1,2,3,4,5}
\draw (\x,0.2) -- (\x,0);
\draw[fill,blue] (-5.1,-0.1) rectangle (-4.9,0.1);
\node at (-5,-0.1)[below] {$-1$};
\draw[fill,blue] (5.1,-0.1) rectangle (4.9,0.1);
\node at (5,-0.1)[below] {$1$};
\node at (0,-0.1)[below] {$0$};
\fill[red] (-4.35,0) circle (2pt);
\fill[red] (-2.6054,0) circle (2pt);
\fill[red] (-2.1635,0) circle (2pt);
\fill[red] (-1.3414,0) circle (2pt);
\fill[red] (0.3389,0) circle (2pt);
\fill[red] (1.2473,0) circle (2pt);
\fill[red] (2.0773,0) circle (2pt);
\fill[red] (3.1965,0) circle (2pt);
\fill[red] (4.2965,0) circle (2pt);
\draw (-5.6,-0.1) rectangle (-5.4,0.1);
\draw (5.6,-0.1) rectangle (5.4,0.1);
\draw (-6.1,-0.1) rectangle (-5.9,0.1);
\draw (6.1,-0.1) rectangle (5.9,0.1);
\draw (-5.1,1.6) rectangle (-4.9,1.4);
\node at (-4.9,1.5) [right] {virtual particles};
\draw[fill,blue] (-5.1,1.1) rectangle (-4.9,0.9);
\node at (-4.9,1) [right] {boundary particles};
\fill[red] (-5.,0.5) circle (2pt);
\node at (-4.9,0.5) [right] {random interior particles};
\end{tikzpicture}
\includegraphics[width=0.3\linewidth]{FIG/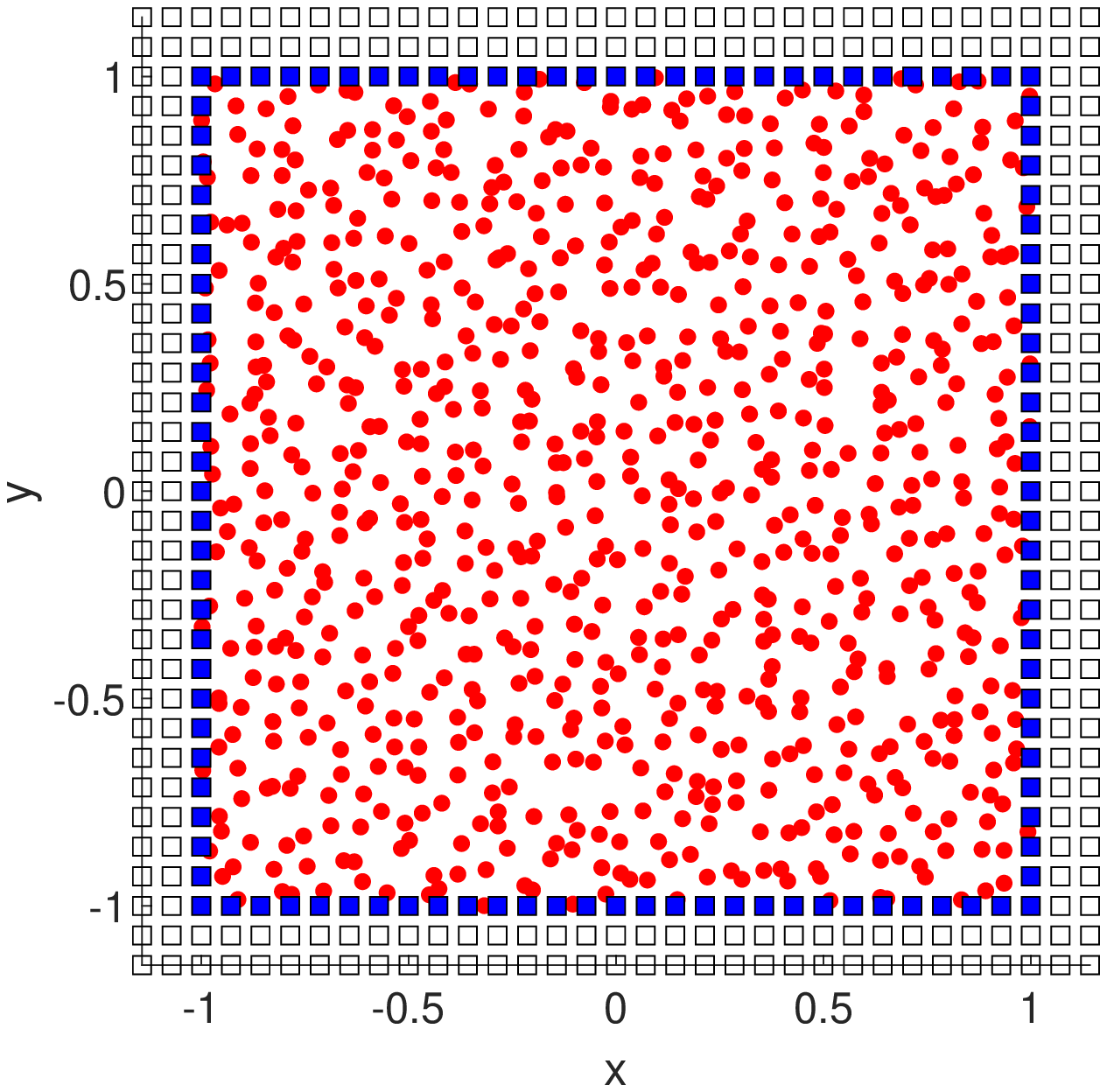}
\caption{Randomly perturbed particles in 1D (top) 2D (bottom)}\label{irr2D}
\end{figure}
\begin{example}
  \textbf{Uniform particles.}
We consider uniform particle distribution in a two-dimensional space. Let $h=3\Delta x$ and $N^{1/d}=2^k$, where $k$ ranges from 3 to 7. The particle approximation errors for the gradient and Laplace operators are being tested individually. The decay of truncation errors can be seen in Figure \ref{non-uniform}.
\end{example}
\begin{figure}[!h]
  \centering
  \includegraphics[width=0.4\linewidth]{FIG/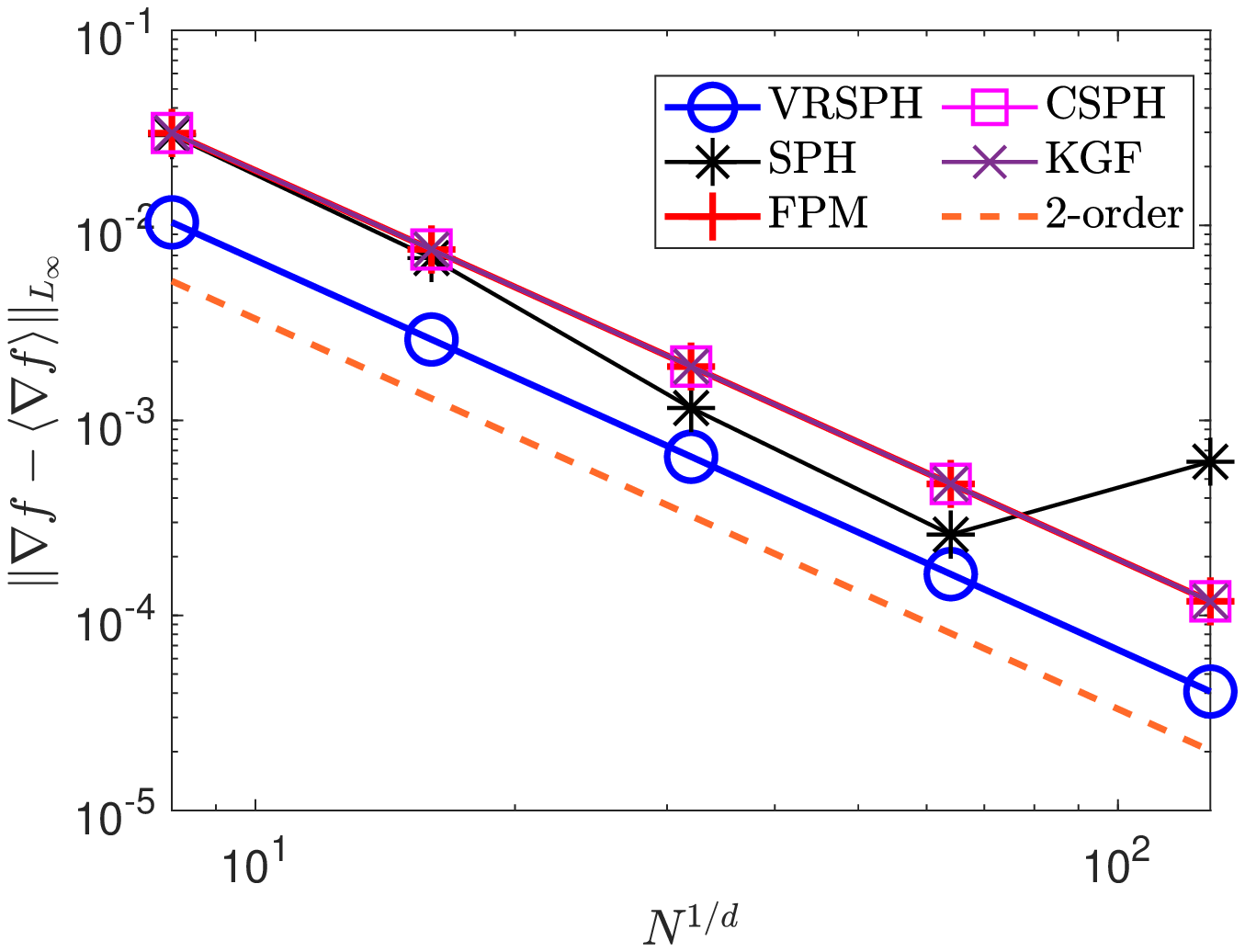}
   \includegraphics[width=0.4\linewidth]{FIG/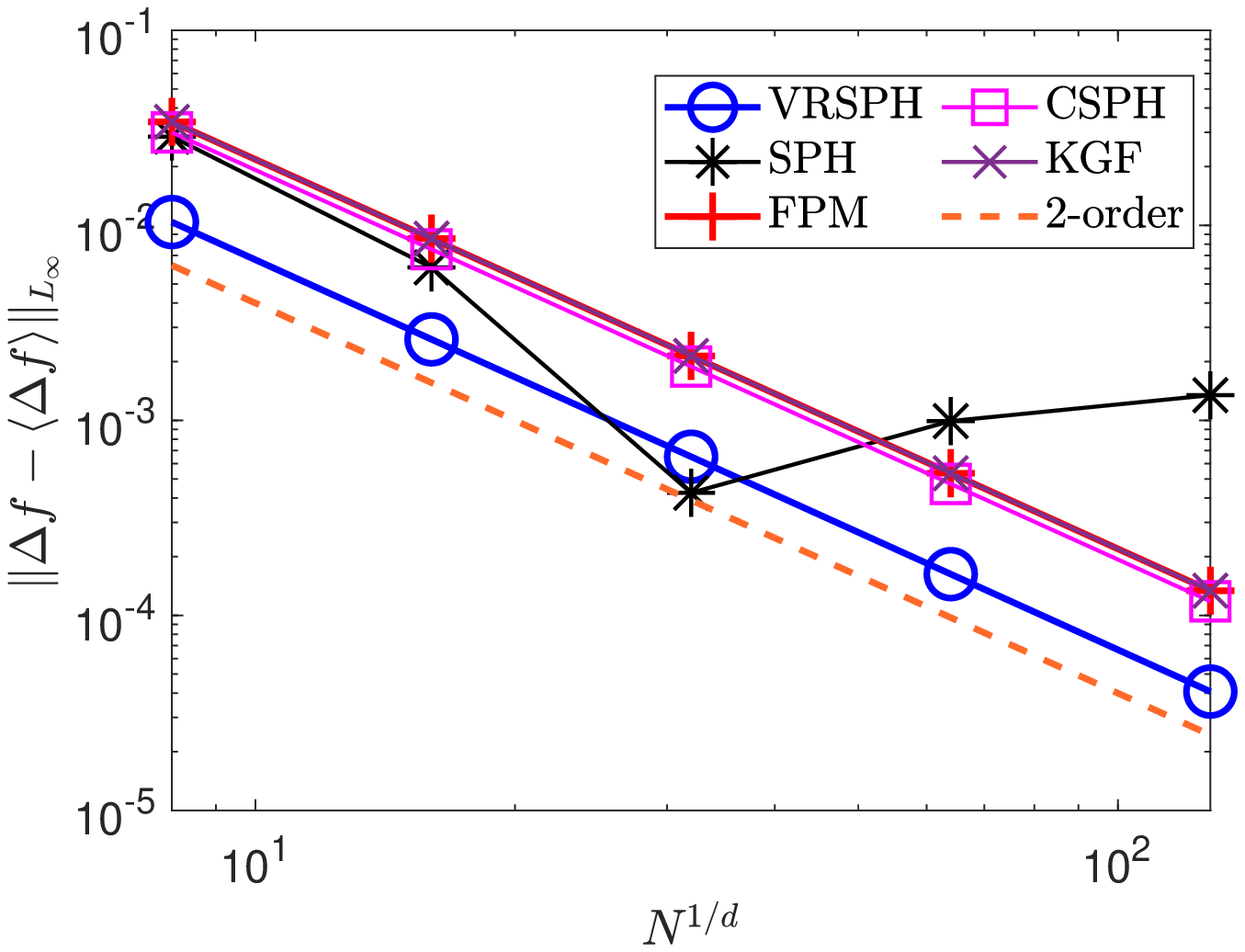}
  \caption{Gradient (left) and Laplace (right) approximation on uniform distribution in 2D}\label{non-uniform}
\end{figure}

From Figure \ref{non-uniform}, it can be seen that the VRSPH method achieves second-order accuracy in approximating the gradient and Laplace operators.
The truncation error of VRSPH is lower than that of the other four methods for the same $N^{1/d}$.
Additionally, the truncation error of SPH in 2D may increase as $h$ is decreased with constant $\Delta x/h$. As discussed in \cite{quinlan2006truncation},  first-order consistent methods such as CSPH have been shown to eliminate this divergent behavior.

Notice that the computation of the covering radius $r_N$ for arbitrary irregular particle distributions is challenging. For randomly perturbed particles, $r_N$ is smaller than $\Delta x$. To test convergence accuracy, we use randomly perturbed particles to simulate irregularly distributed particles, where $h$ varies proportionally with $N^{-1/d}$.

\begin{example}
\textbf{Randomly perturbed particles.}
We allow particles uniformly distributed on $\Omega$ and undergo random perturbations. The perturbation displacement in the direction of the coordinate axes following a random distribution $U(-0.5 \Delta x, 0.5 \Delta x)$ as shown in Figure \ref{irr2D}.  
The corresponding 3D particle distribution is  similar to the 2D case and is therefore omitted for brevity.   
We set $h=3\Delta x$ and test the particle approximation error of the gradient and Laplace operators. The truncation errors are presented in Figures \ref{gradient1D2D} and \ref{Laplace1D2D}.
\end{example}
\begin{figure}[!h]
\label{gradient1D2D}
  \centering
  \includegraphics[width=0.3\linewidth]{FIG/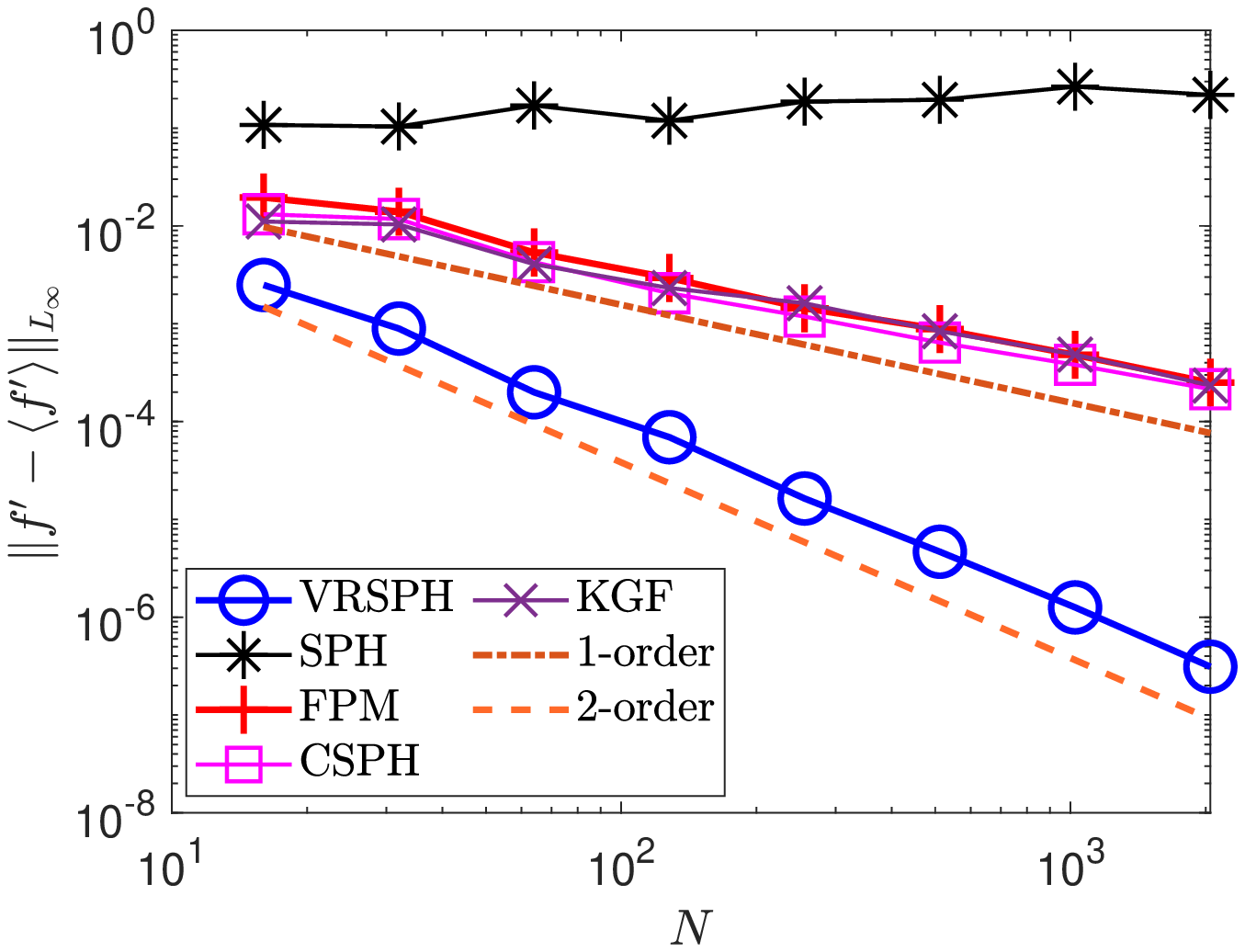}
  \includegraphics[width=0.3\linewidth]{FIG/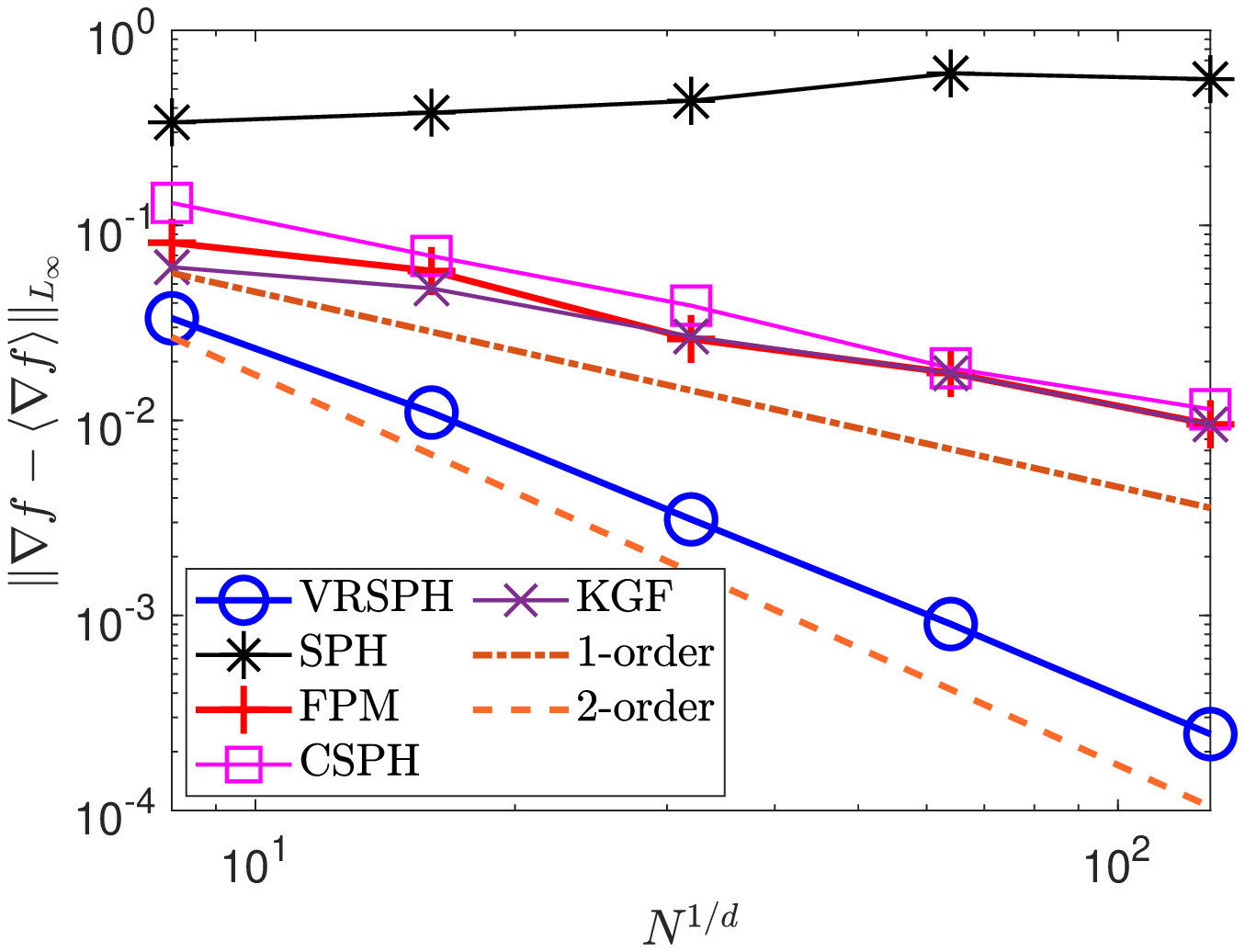}
  \includegraphics[width=0.3\linewidth]{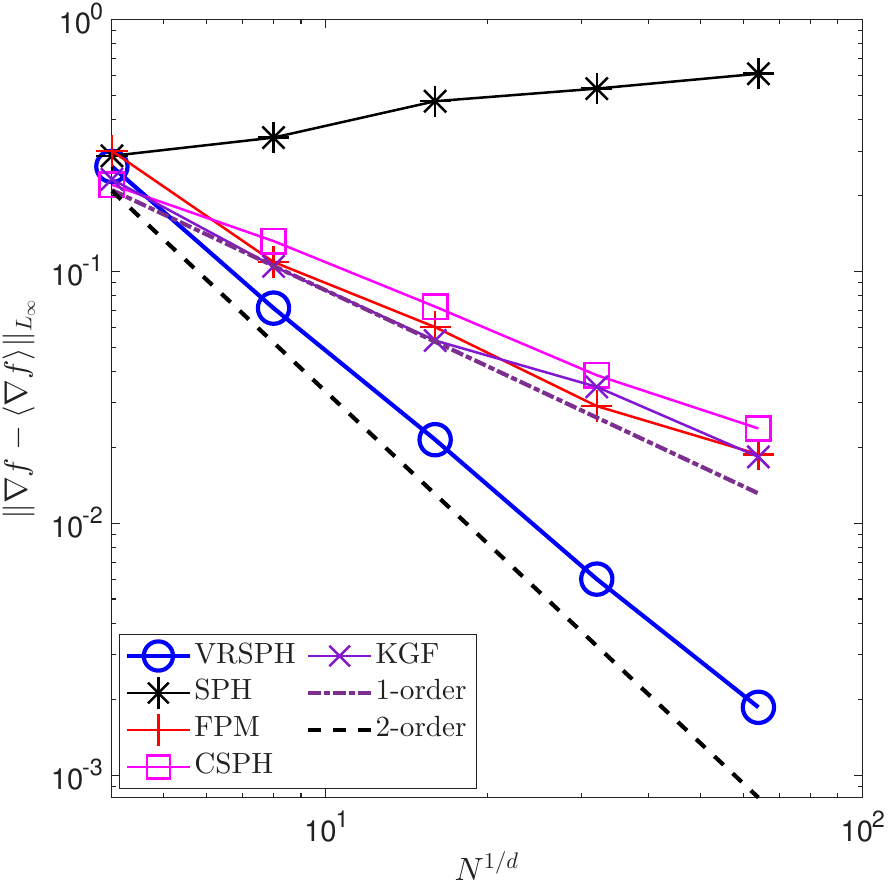}
  \caption{Truncation error of the gradient in 1D (left), 2D (middle) and 3D (right)}
\end{figure}

Based on Figure \ref{gradient1D2D}, it is evident that the traditional SPH method does not decrease the truncation error of the gradient approximation as the number of particles increases. Conversely, CSPH, FPM, and KGF methods show improvement in reducing the truncation error of the gradient to first-order accuracy. Our proposed VRSPH method enhances the gradient approximation error to second-order accuracy.
\begin{figure}[!h]
\label{Laplace1D2D}
  \centering
  \includegraphics[width=0.3\linewidth]{FIG/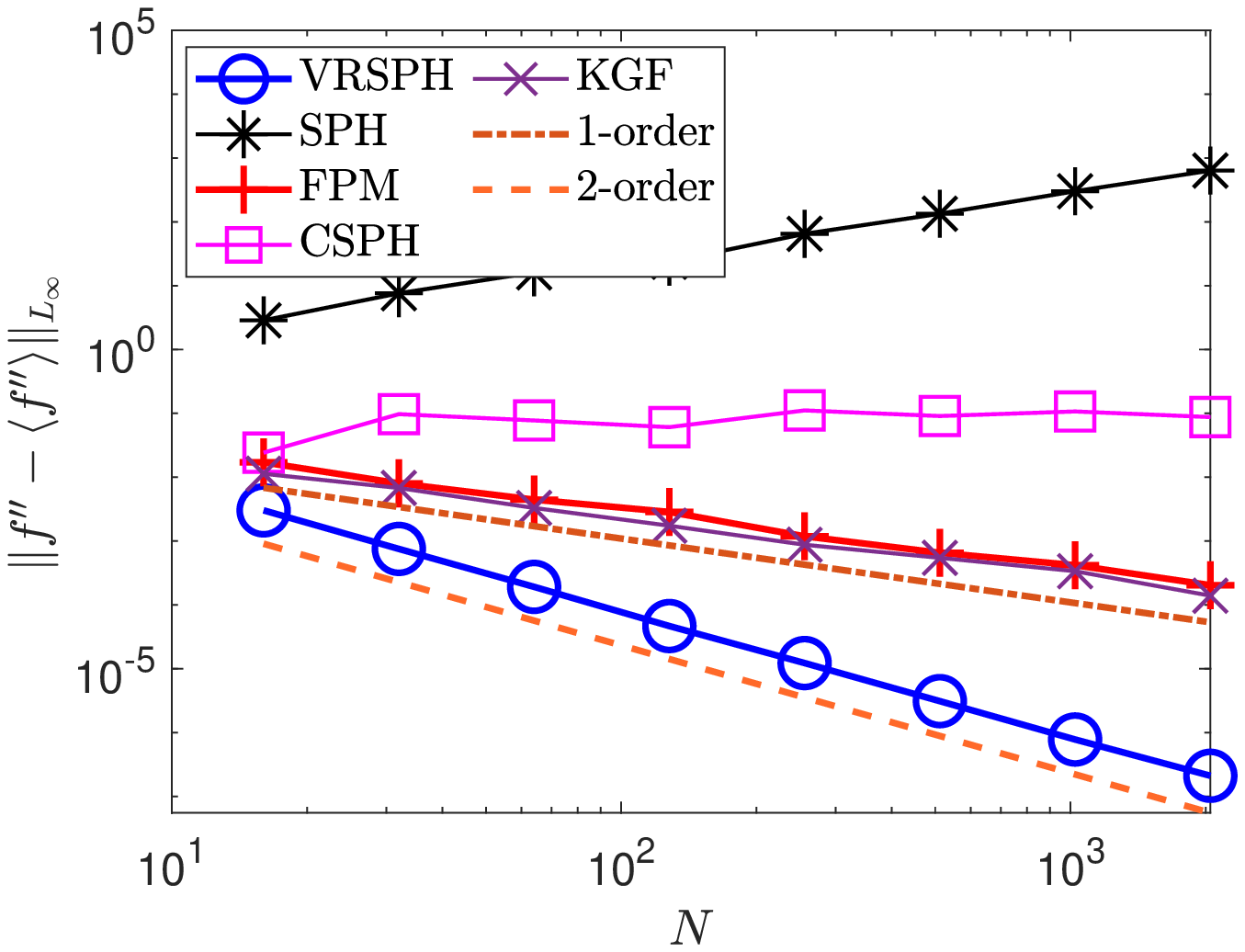}
  \includegraphics[width=0.3\linewidth]{FIG/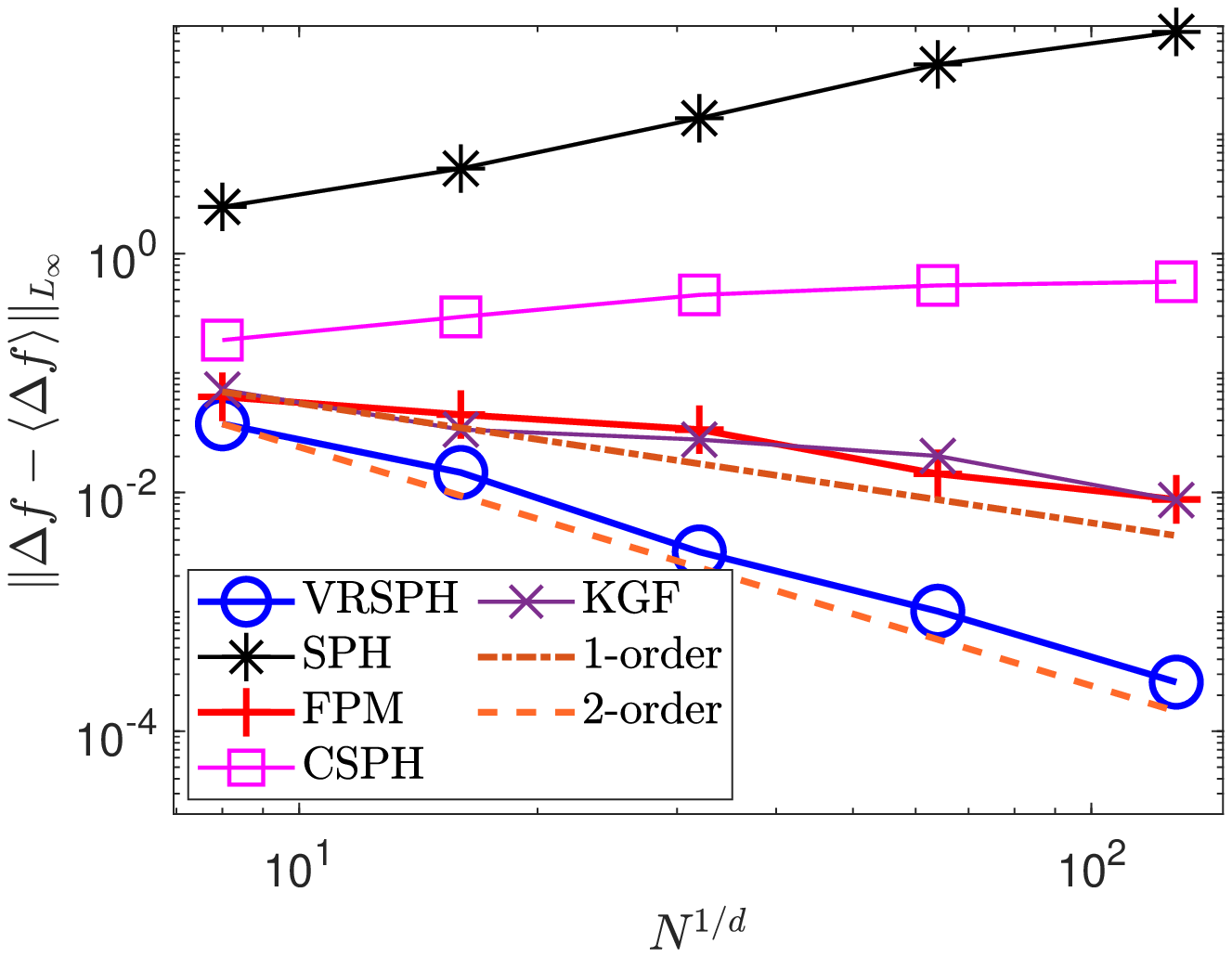}
  \includegraphics[width=0.3\linewidth]{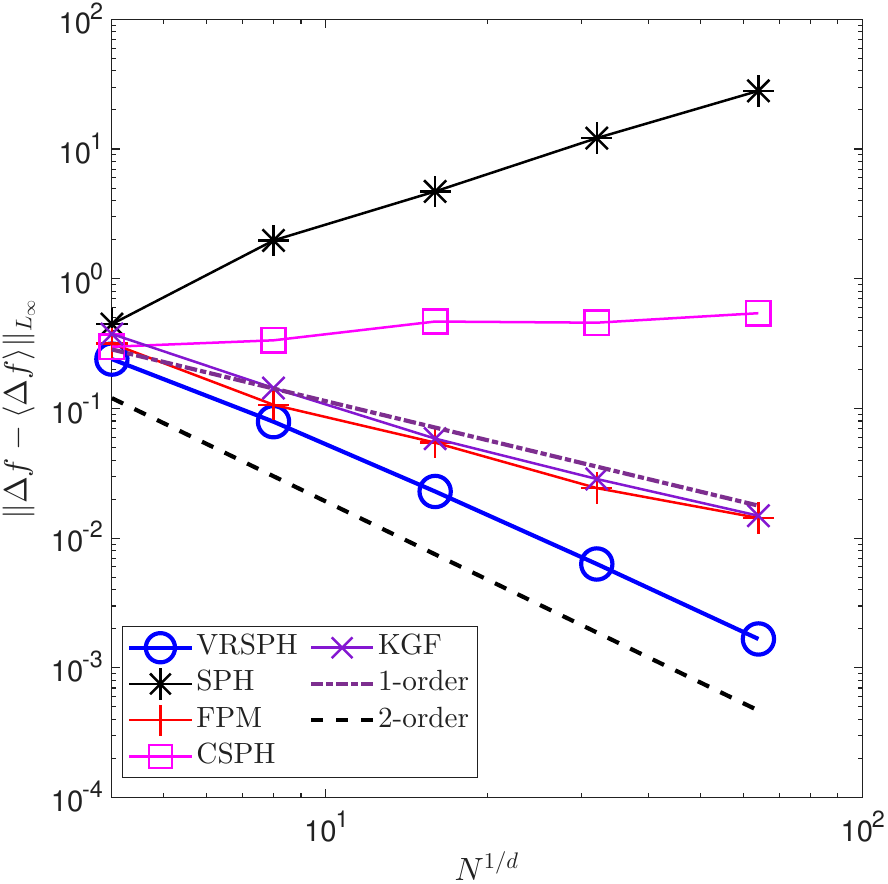}
  \caption{Truncation error of the Laplacian in 1D (left), 2D (middle) and 3D (right) }
\end{figure}
Figure \ref{Laplace1D2D} illustrates the errors in approximating the Laplace operator in the presence of randomly perturbed particle distributions. It is evident that the traditional SPH method exhibits divergence, and the CSPH method fails to ensure first-order accuracy in truncation error. Similarly, the VRSPH method is capable of enhancing the Laplace operator approximation error to second-order accuracy.
\begin{figure}[!h]
\label{irrMat}
  \centering
  \includegraphics[width=0.4\linewidth,height=4.2cm]{FIG/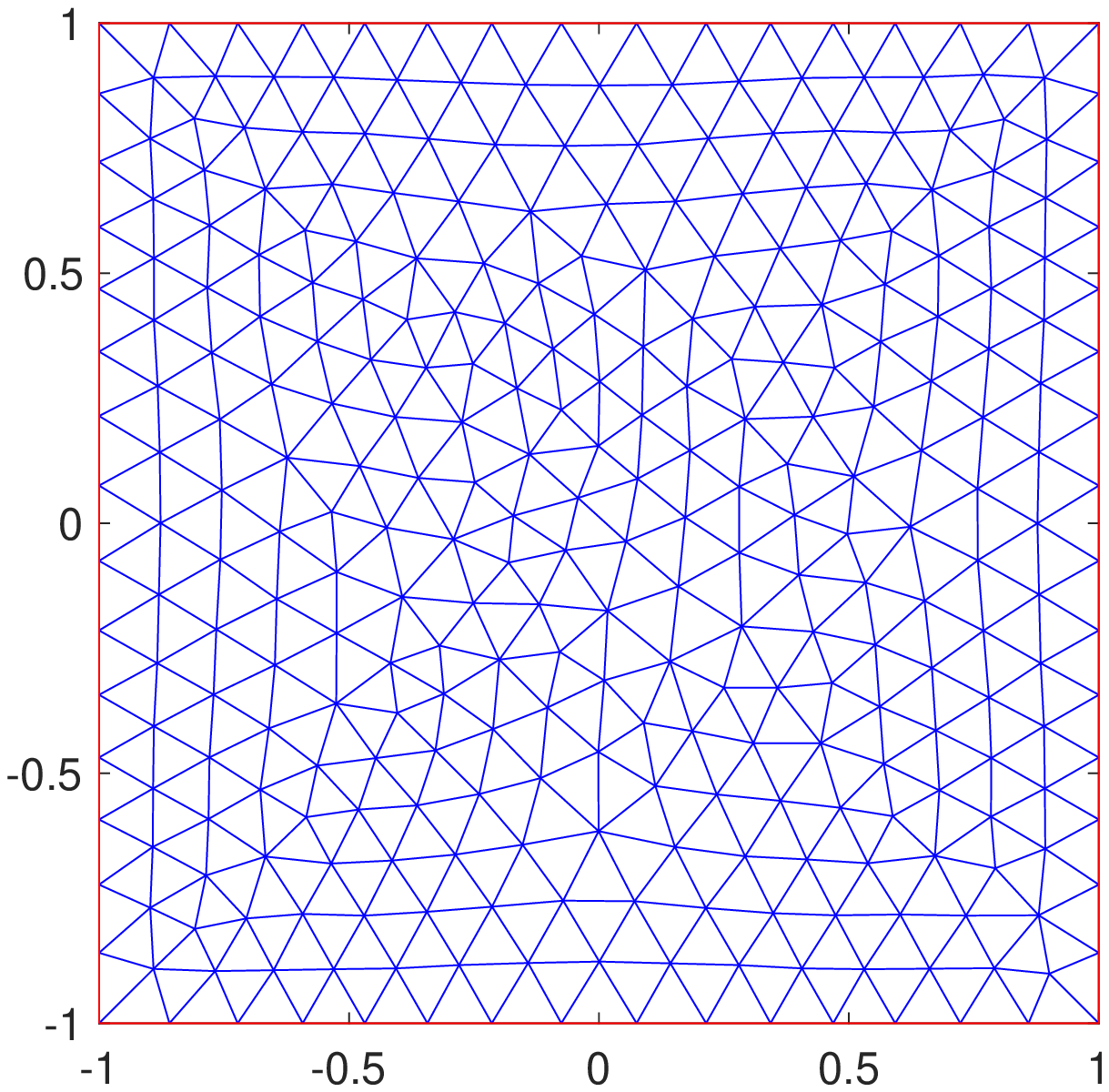}
  \includegraphics[width=0.4\linewidth,height=4.2cm]{FIG/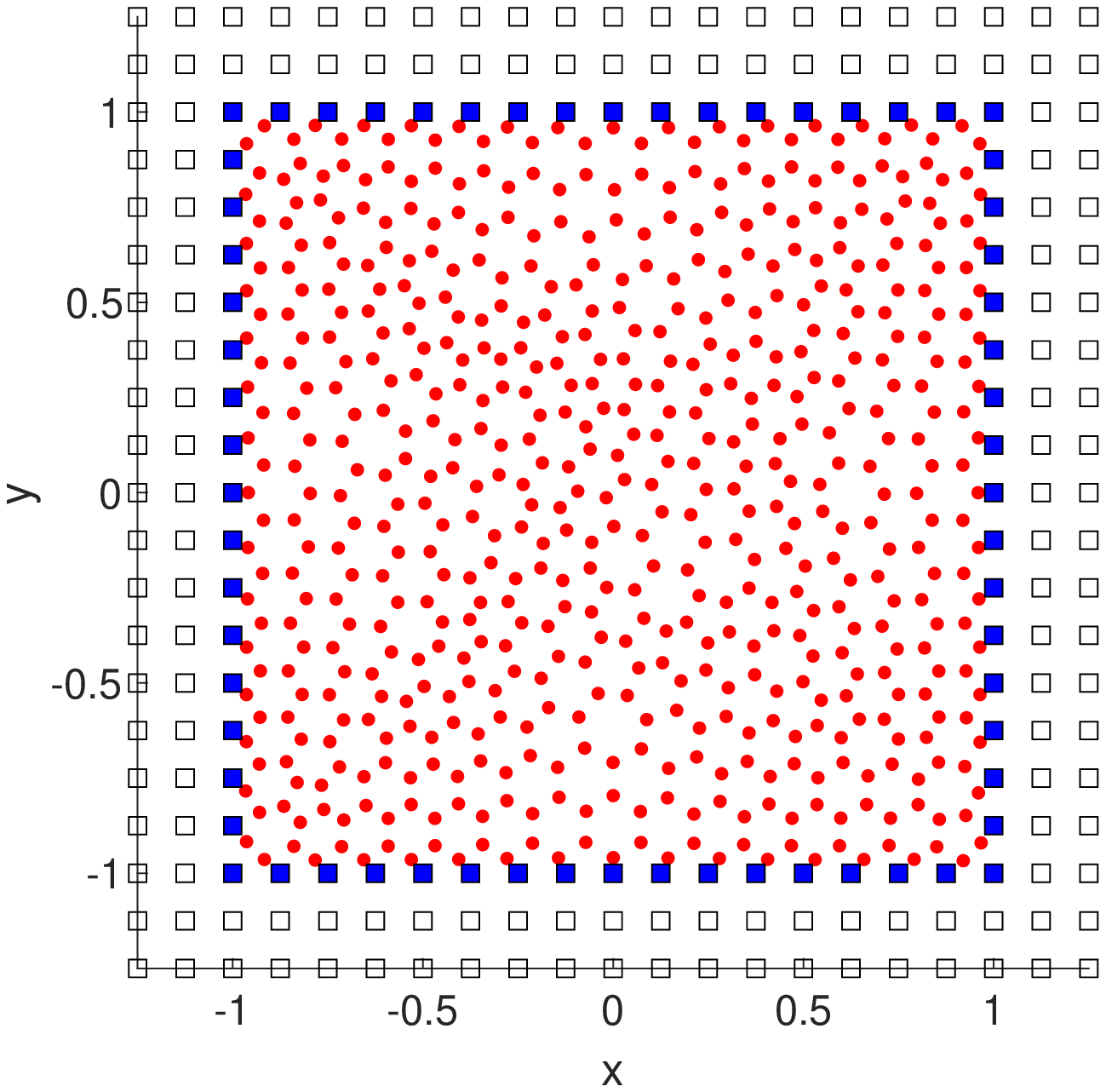}
  \caption{Triangular element (left) and corresponding particle (right) distributions in 2D.}
\end{figure}

\begin{example} As depicted in Figure \ref{irrMat}, we have discretized the domain $[-1,1]^2$ using a triangular mesh with a maximum edge length of $\Delta x$, which was generated utilizing the MATLAB Partial Differential Equation Toolbox \cite{Matlab}. The particles are positioned at the centroids of the triangular elements. Similar irregular distribution  have also been explored in previous studies \cite{liu_restoring_2006,zhang2018decoupled}. In this case, we have set $h=4\Delta x$. \end{example}

\begin{figure} [!h]
\label{Merror} 
\centering 
\includegraphics[width=0.4\linewidth]{FIG/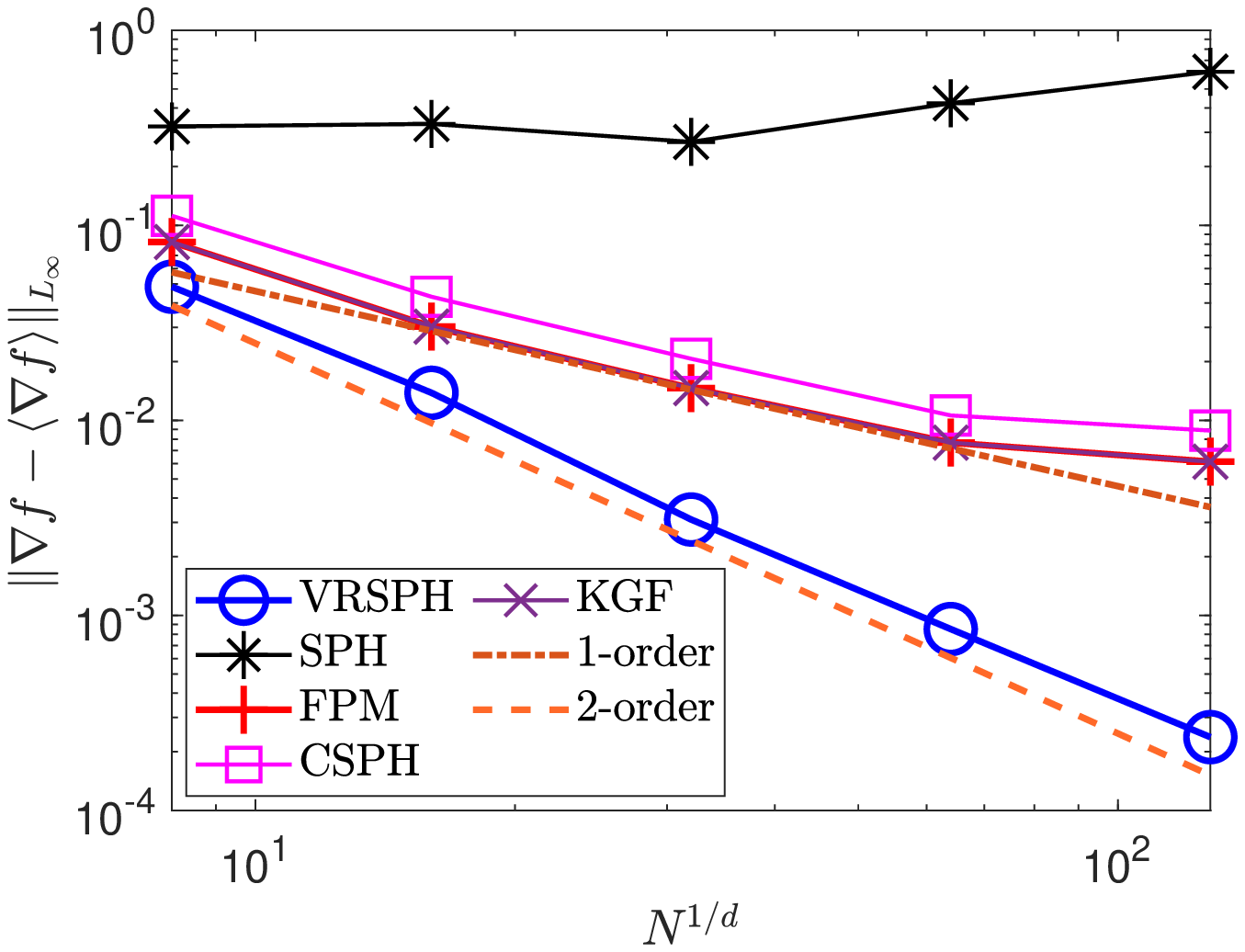} 
\includegraphics[width=0.4\linewidth]{FIG/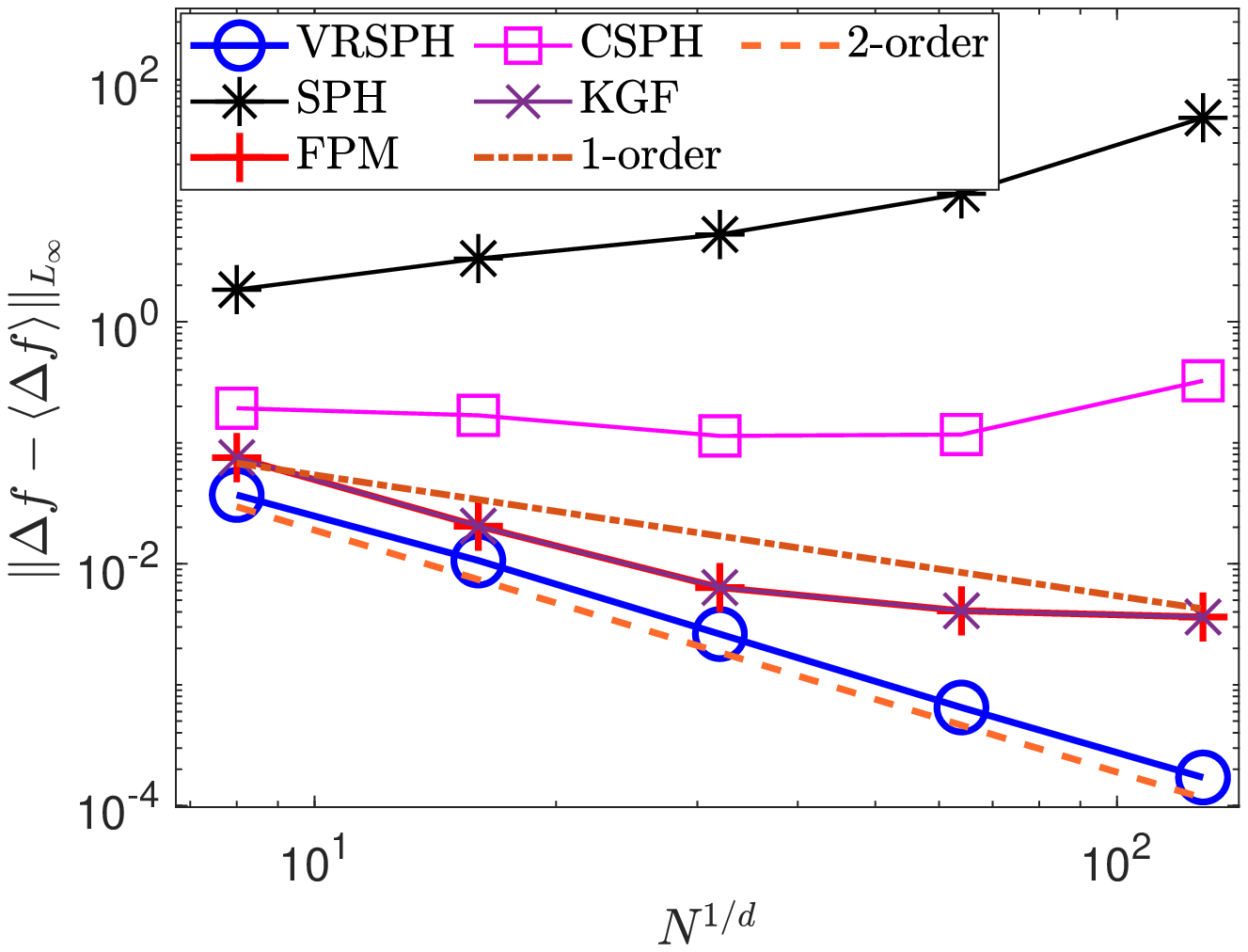} 
\caption{Gradient (left) and Laplace (right) with irregular distributed particles in 2D}
\end{figure}
Figure \ref{Merror} shows the particle approximation results of five different methods for the gradient and Laplace operators. It can be seen that the FPM method barely maintains first-order accuracy in truncation error. In contrast, VRSPH still stably maintains second-order accuracy.
\subsection{Approximation accuracy of boundary particles}
It is important to note that the kernel function is truncated at the boundary, and the kernel approximation of boundary particles cannot maintain second-order accuracy. Therefore, it is crucial to improve the approximation of the function values and derivative values of boundary particles. In this section, we will remove the virtual particles outside the boundary and study the approximation effect of the VRSPH method on boundary particles.

\begin{example}
We investigated the truncation error of boundary points when particles are uniformly distributed. Specifically, this refers to particles being uniformly distributed without placing virtual particles outside the boundary. Here, we set $h=3\Delta x$.
\end{example}

Figure \ref{uniformNonuniform} shows the particle approximation for Example 4. 
\begin{figure}[!h]
\label{uniformNonuniform}
  \centering
  \includegraphics[width=0.4\linewidth]{FIG/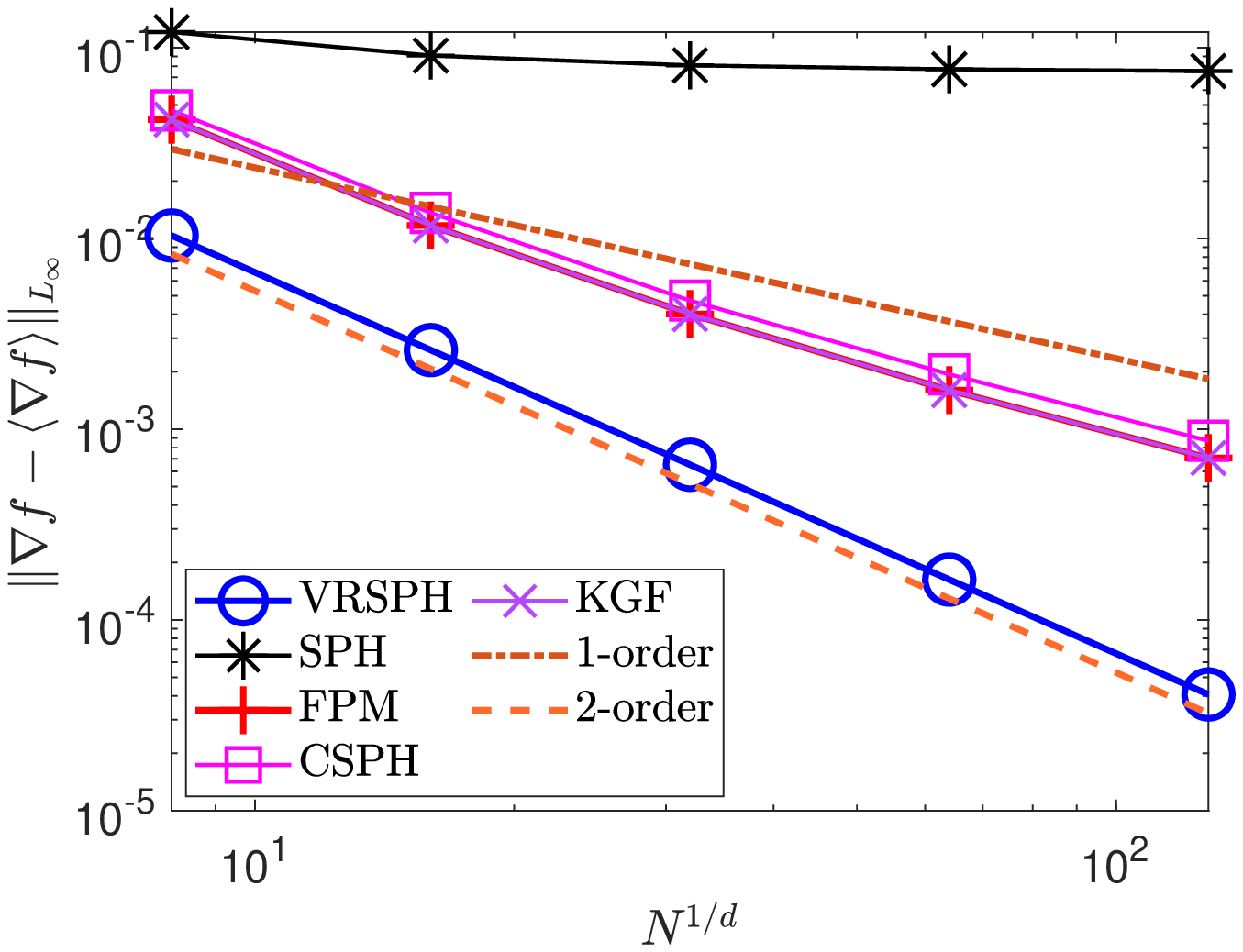}
  \includegraphics[width=0.4\linewidth]{FIG/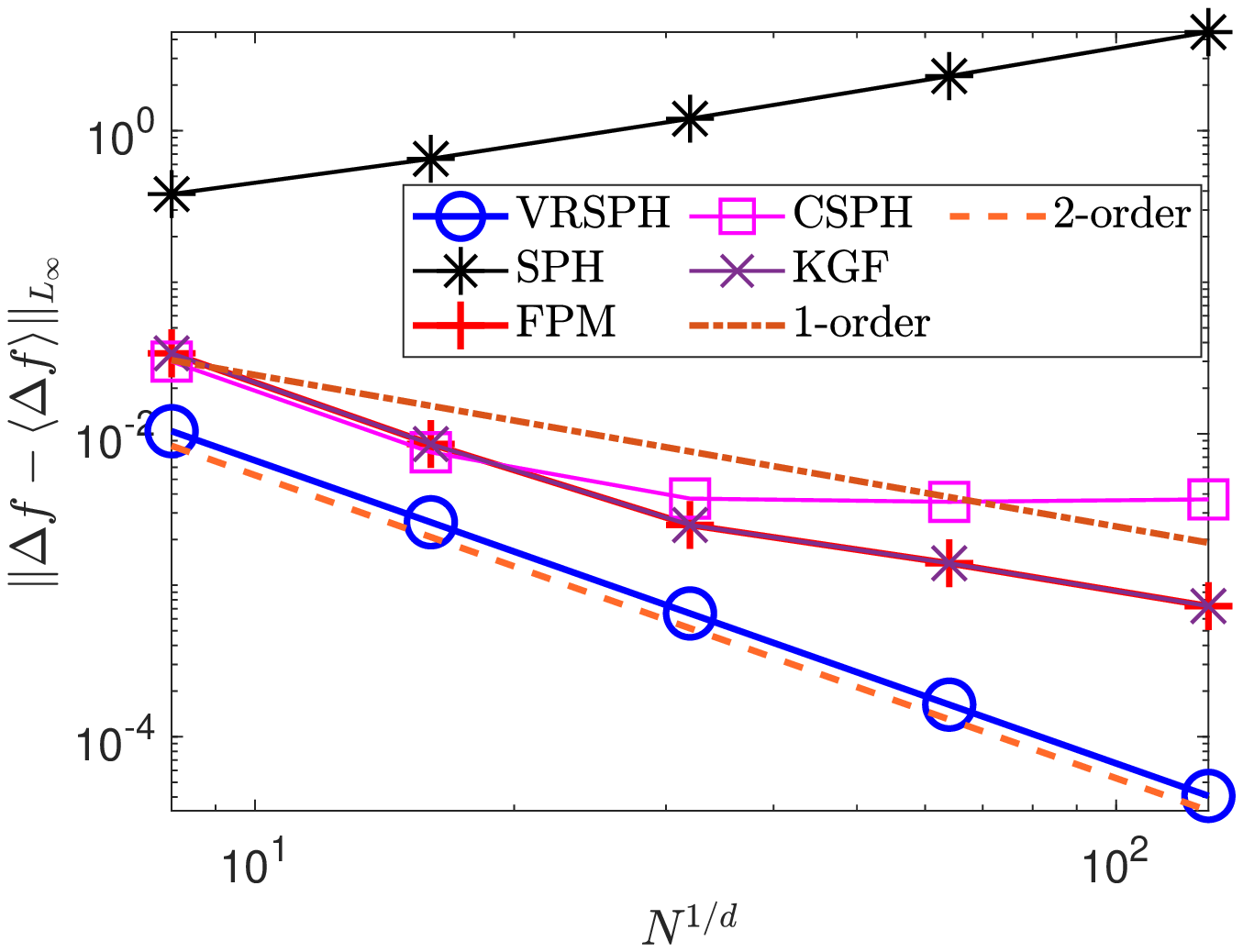}
  \caption{Gradient (left) and Laplace (right) with uniform distribution without virtual particles in 2D}
\end{figure}
It can be observed that the VRSPH method still maintains second-order accuracy, even though the kernel approximation is only first-order accurate at boundary points. 
It can be inferred that, if particles near the boundary are evenly distributed while those in the interior experience random perturbations, VRSPH can still ensure second-order accuracy in approximating gradients and Laplace operators. However, VRSPH does not guarantee second-order accuracy for boundary particles under irregular distributions. In fact, if particles near the boundary undergo random perturbations, and with $h=3\Delta x$, the truncation error no longer maintains second-order accuracy.
\subsection{CPU test}
In order to test the computational efficiency of different methods, we investigated the relationship between computational expense (CPU time) and numerical accuracy for each methods. We recorded the CPU time for calculating the Laplace derivative in 2D ten times, with a particle number of $N=2^n, n=3,4,5,6,7$. The ratio of the influence radius to the average particle spacing $h/\Delta x$ is set to 3. The CPU consumption times and error are show in Figure \ref{CPUtest}. 
It can be observed that VRSPH requires significantly less CPU time than other methods to achieve same accuracy. Note that the VRSPH method necessitates an  estimation of $r_N$ to determine the smoothing length $h$. The CPU time of VRSPH presented in Figure \ref{CPUtest} already incorporates the computational cost for estimating $r_N$. Furthermore, we compared the total computational time of VRSPH with the time required specifically for calculating $r_N$ across different numbers of particles, as illustrated in Figure \ref{CPUtestrN}. 
\begin{figure}[!h] 
  \centering
  \includegraphics[width=0.32\linewidth]{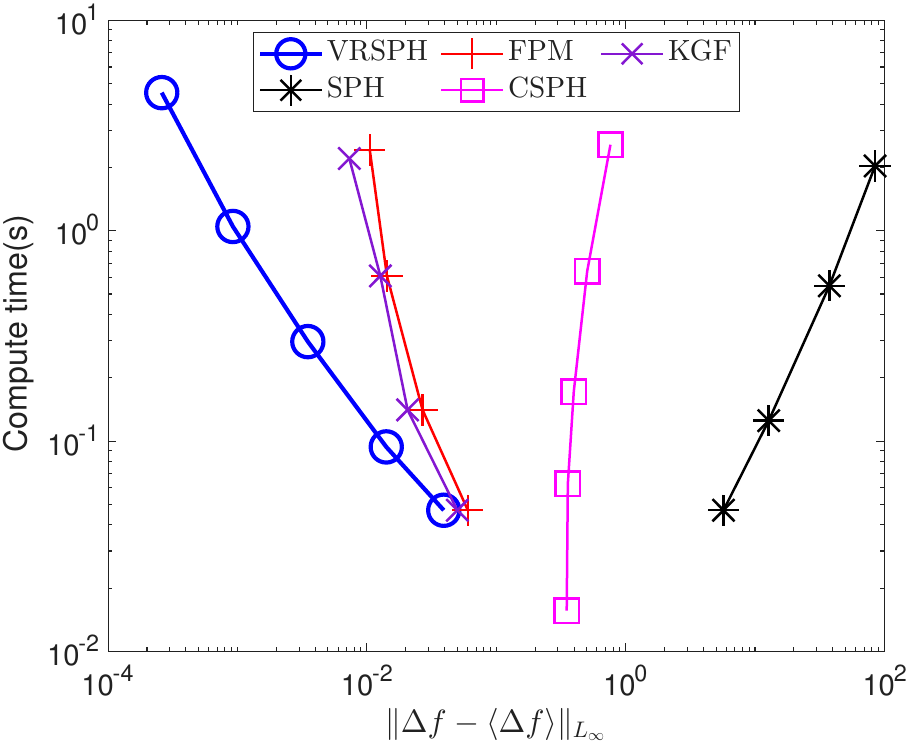}
  \includegraphics[width=0.32\linewidth]{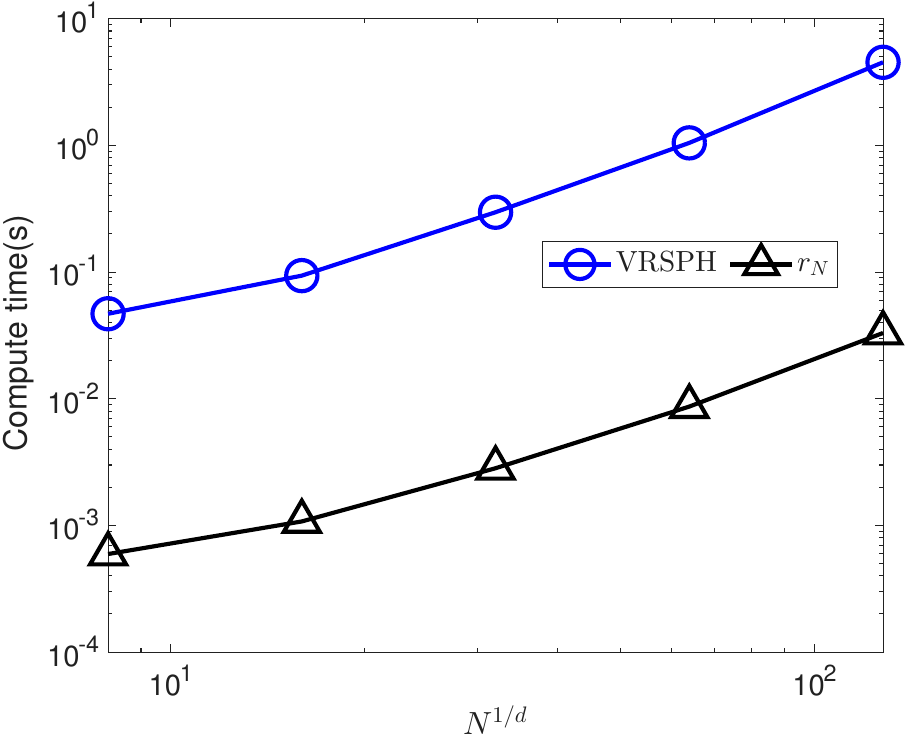} 
  \caption{Compute time for $h/\Delta x=3$ in 2D.}
  \label{CPUtest}
\end{figure}
\begin{figure}[!h] 
  \centering
  \includegraphics[width=0.32\linewidth]{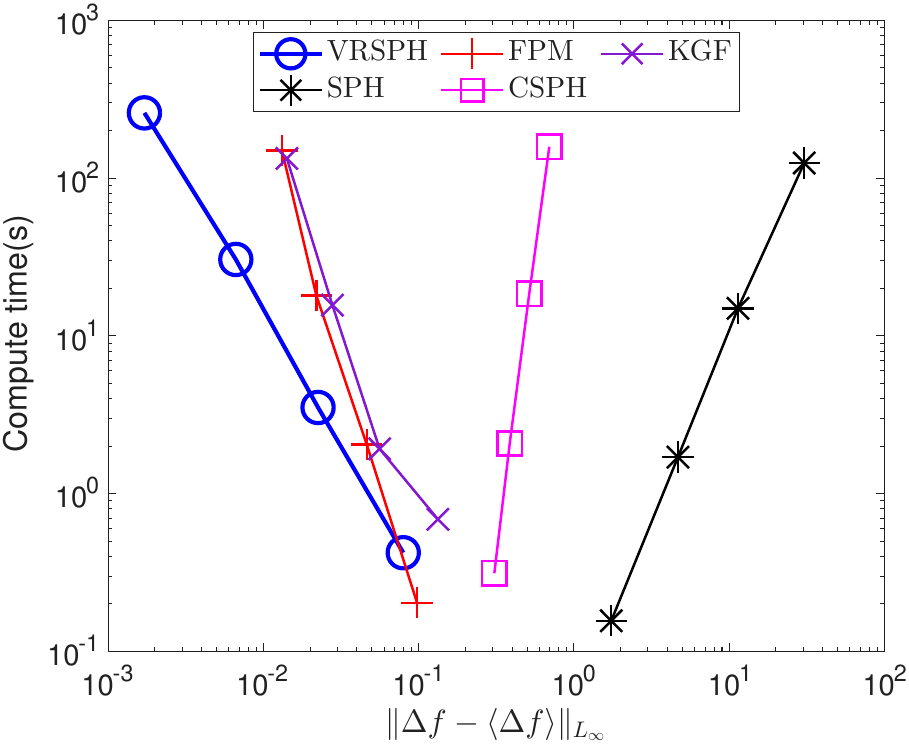}
  \includegraphics[width=0.32\linewidth]{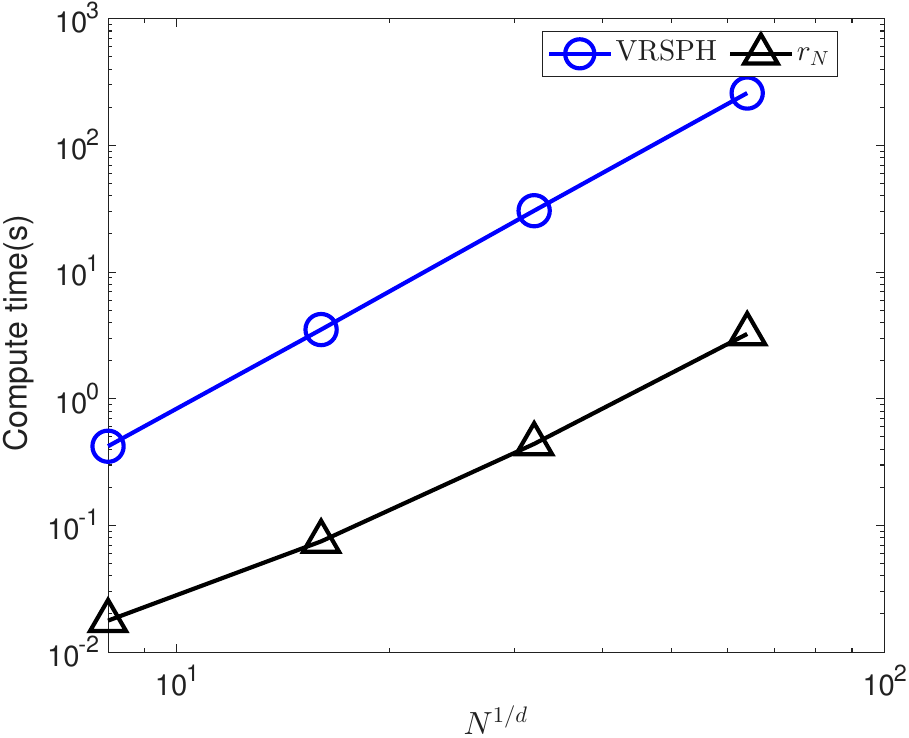}
  \caption{Compute time for $h/\Delta x=3$ in 3D }
    \label{CPUtestrN}
\end{figure}
The results demonstrate that the computational cost for $r_N$ accounts for approximately $1\%$ (for 2D) or $5\%$ (for 3D) of the total algorithm's execution time.

\subsection{ Variable coefficient Poisson equation}
We will test numerical scheme \eqref{scheme} with randomly perturbed particles and uniform particles.
\begin{example}
\textbf{Accuray test.} We now test the convergence rates of the proposed schemes \eqref{scheme}. Let $\Omega=[0,1]^2$ , and we substitute
\begin{equation*}
  \left\{
\begin{array}l
u=\sin(\pi x)\sin(\pi y)\\
a=x^2+y^2+1
  \end{array}
\right.
\end{equation*}
to \eqref{eq1.1} to get $f(x,y)$ at the right hand side of the equation.
Here, we set $ h=3 \Delta x$.
\end{example}

The results for randomly perturbed particles and uniform particles are listed in Table \ref{tab:foo} and Table \ref{tab:foo2}, respectively.  
\begin{table}[!h]
{\footnotesize
  \caption{Errors and convergence rates with randomly perturbed particles}  \label{tab:foo}
\begin{center}
\begin{tabular}{ccccc}\hline
\multirow{2}{*}{ $N^{1/d}$ } & \multicolumn{2}{c}{SPH} & \multicolumn{2}{c}{VRSPH} \\ \cline{2-5}
&$\|u(\mathbf{x})-U\|_\infty$ & Order&$\|u(\mathbf{x})-U\|_\infty$ & Order \\
\hline
20&1.2604e-01&--&5.7539e-03&--\\
40&9.5327e-02&0.402&1.3800e-03&2.060\\
80&9.3375e-02&0.029&3.4862e-04&1.985\\
160&9.3170e-02&0.003&8.7993e-05&1.986\\
\hline
\end{tabular}
\end{center}
}
\end{table}
\begin{table}[!h]
{\footnotesize
  \caption{Errors and convergence rates with uniform particles}  \label{tab:foo2}
\begin{center}
\begin{tabular}{ccccc}\hline
\multirow{2}{*}{ $N^{1/d}$ } & \multicolumn{2}{c}{SPH} & \multicolumn{2}{c}{VRSPH} \\ \cline{2-5}
&$\|u(\mathbf{x})-U\|_\infty$ & Order&$\|u(\mathbf{x})-U\|_\infty$ & Order \\
\hline
20&3.8755e-02&--&7.0421e-03&--\\
40&1.9578e-02&0.9851&1.7623e-03&1.999\\
80&9.9280e-03&0.9797&4.4023e-04&2.001\\
160&5.0103e-03&0.9866&1.1005e-04&2.000\\
\hline
\end{tabular}
\end{center}
}
\end{table}
It can be observed that the convergence order of the SPH method remains first-order under uniform particle distribution. This confirms that the accuracy of particle approximation drops by one order due to the kernel function truncated by boundary. From Table \ref{tab:foo}, it is evident that the convergence rate of the SPH method approaches zero when affected by irregular particle distributions. Conversely, the VRSPH demonstrates second-order convergence in scenarios involving both uniform and irregular particle distributions, thereby confirming the validity of Theorem \ref{convergence}.

\subsection{Poiseuille flow}
The flow of fluid between two parallel, stationary plates at $y = 0$ and $y = l$ is described by Poiseuille flow. 
The experimental setup  in this study are similar to those outlined in references \cite{liu_modeling_2005,morris1997modeling}. Initially at rest, the fluid is propelled by a body force $F$  until it attains steady-state conditions. For this investigation, we have chosen the parameters: the separation between plates $l = 10^{-3}$ m, fluid density $\rho = 10^3$ kg/m$^3$  and kinematic viscosity $\nu = 10^{-6}$ m$^2$/s.  The body force $F$ influences the maximum fluid velocity $v_0$ and subsequently the Reynolds number. We have explored three scenarios with distinct body forces: (i)~$F = 8 \times 10^{-4}$~m/s$^2$, (ii)~$F = 4$~m/s$^2$, and (iii)~$F = 16$~m/s$^2$. The corresponding peak velocities are $v_0 = 1.0 \times 10^{-4}$~m/s, $0.5$~m/s, and $2$~m/s, leading to Reynolds numbers of $\text{Re} = 0.1$, $500$, and $2000$, respectively. Since the onset of turbulence occurs at \text{Re} $>$ 2300, flow regimes beyond this threshold are not considered in the present analysis. The speed of sound was set at $c = 20$ m/s for all cases. 
The simulation is Lagrangian and   the average particle spacing is set as \(\Delta x=  5\times10^{-5}\)m.  A second-order
leapfrog scheme is used for time integration and the time step was set to $10^{-4}$ s. 
Figure \ref{Poiseuille} showcases a comparison between the instantaneous velocity profiles and analytical solutions. The maximum relative errors at the final steady state are below  $7.63\times10^{-5}$. 
The results demonstrate that the proposed method remains stable and effective within the tested range of \text{Re} = 0.1 to 2000.
These findings underscore the adeptness of the VRSPH method in discretizing the governing equations for fluid dynamics, as evidenced by its close alignment with theoretical predictions. 
\begin{figure} [!h]
\label{Poiseuille} 
\centering  
\includegraphics[width=0.29\linewidth]{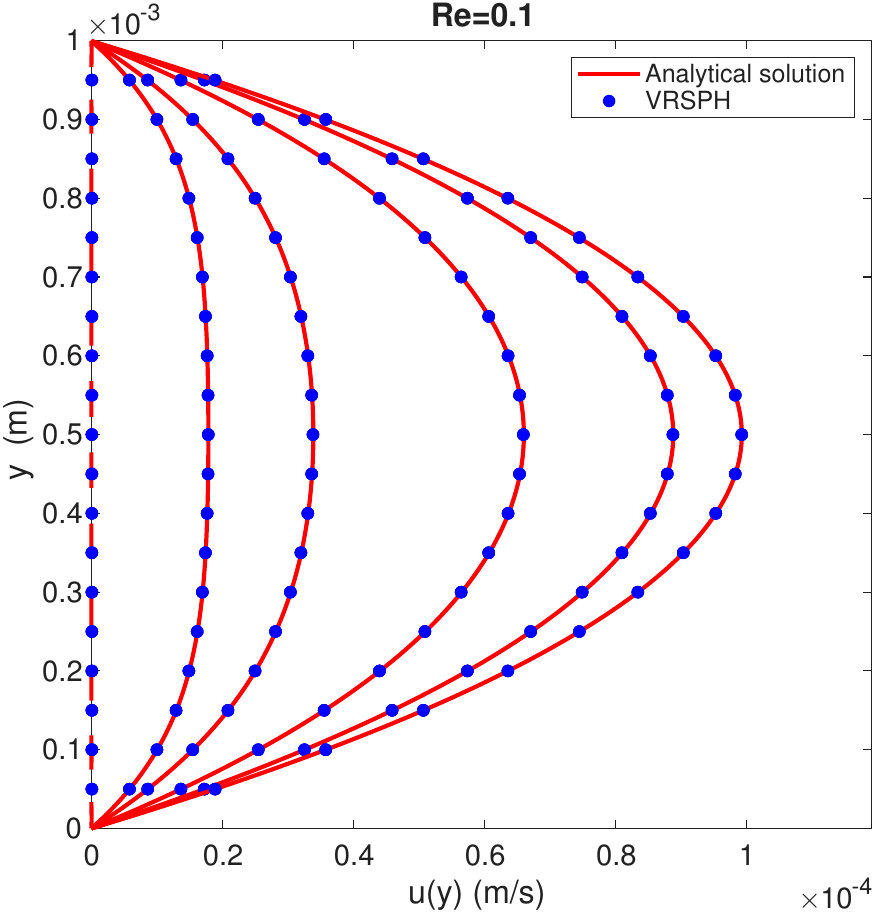} 
\includegraphics[width=0.29\linewidth]{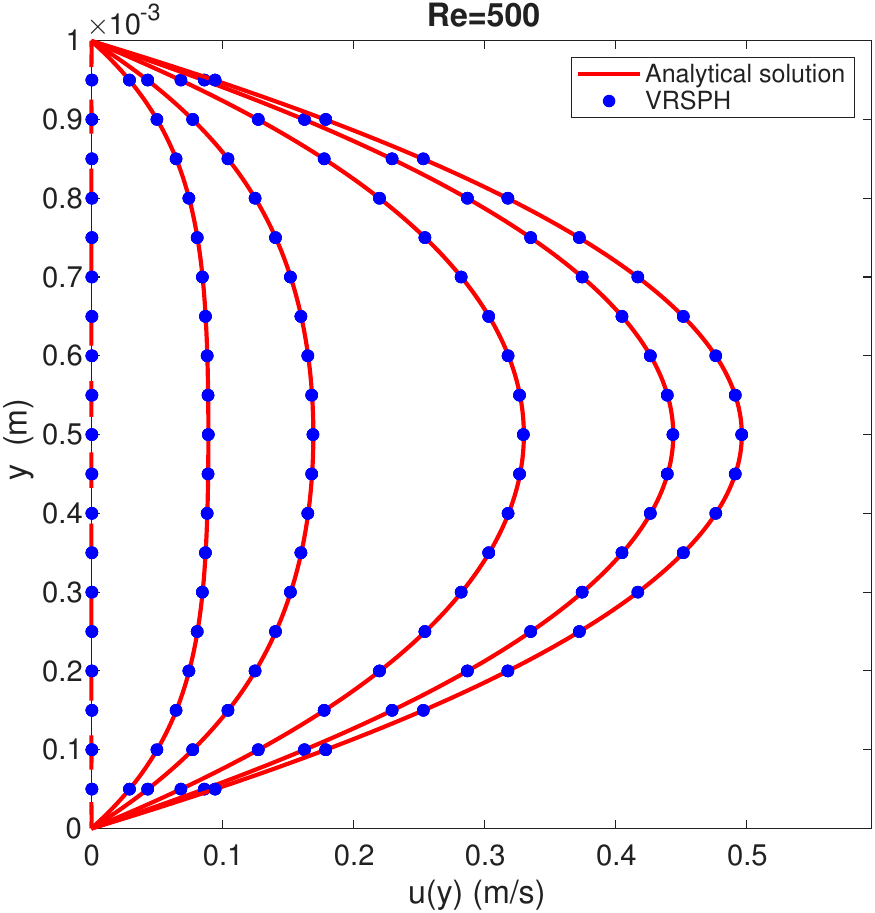}
\includegraphics[width=0.29\linewidth]{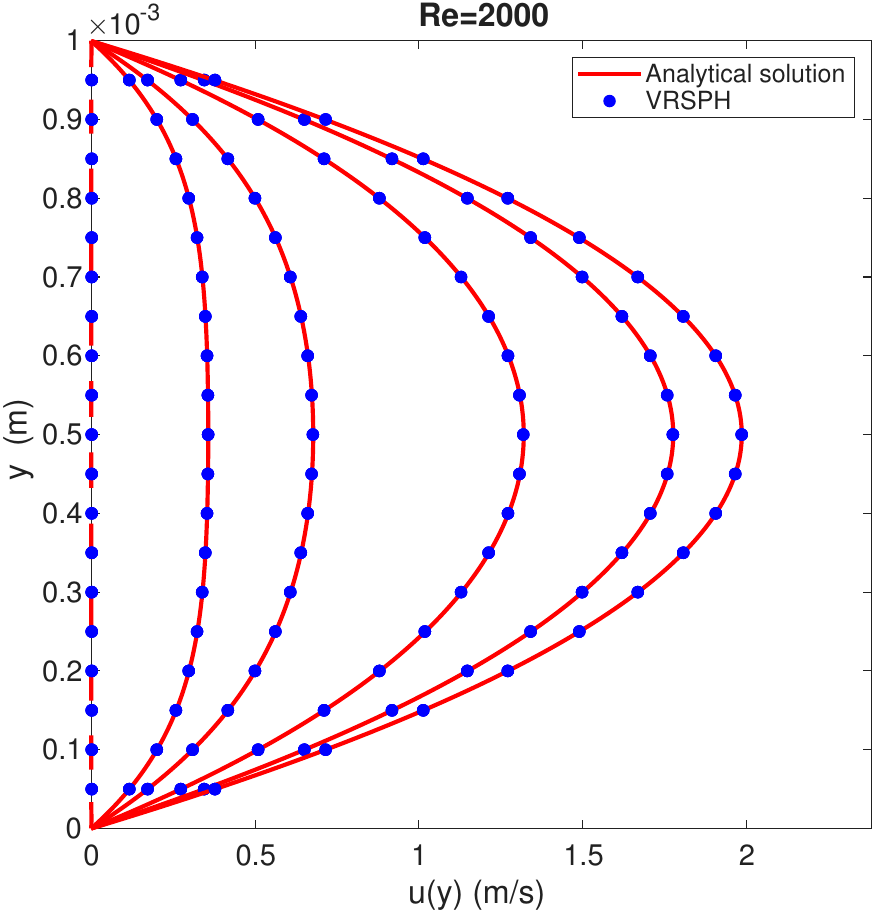}
\caption{Comparison of analytical solutions for transient velocities under different body forces ($F = 8 \times 10^{-4}\ \text{m/s}^2$ (left);  
$F = 4 $ m/s$^2$ (middle); $F = 16 $ m/s$^2$ (right). The six curves in each subplot (from left to right)  correspond to time instances $t = 0$, $0.0225$, $0.045$, $0.1125$, $0.225$, and $0.5$ seconds, respectively.}
\end{figure}

We also examine the error convergence for this Poiseuille flow case. As shown in Table \ref{Tab:3}, three different spatial resolutions $\Delta x$ are used. For time integration, a second-order leapfrog scheme is employed with a sufficiently small, fixed time step so that spatial errors remain dominant. We quantify the error between the numerical solution $u(\mathbf{y})$ and analytical solution $U_{ana}(\mathbf{y})$ in the infinity norm (i.e., $\|u(\mathbf{y})-U_{ana}(\mathbf{y})\|_\infty$), where $\mathbf{y}$ represents the 
y-coordinate of representative particles sampled from the channel flow.  It is evident that the spatial error converges at second-order accuracy, which aligns with the theoretical prediction. 
\begin{table}[!h] 
{\footnotesize
  \caption{Errors and convergence rates at $T=0.0225$s with VRSPH} 
  \label{Tab:3}
\begin{center}
\begin{tabular}{cccccccc}\hline 
\multirow{2}{*}{ $\Delta x$ }&\multirow{2}{*}{ $\Delta t$ } & \multicolumn{2}{c}{Re=0.1} & \multicolumn{2}{c}{Re=500}& \multicolumn{2}{c}{Re=2000} \\ \cline{3-8}
&&Error & Order&Error & Order&Error & Order \\ 
\hline
5.00e-05&1.67e-05&4.0352e-08&--    &2.0177e-04&--&8.0705e-04&--\\
2.50e-05&1.67e-05&1.0115e-08&1.9961&5.0557e-05&1.9967&2.0236e-04&1.9957\\
1.25e-05&1.67e-05&2.5339e-09&1.9971&1.0325e-05&2.2918&4.9424e-05&2.0336\\ 
\hline
\end{tabular}
\end{center}
}
\end{table} 

\section{Conclusions}\label{Sec7}
We have provided regularity conditions for achieving second-order accuracy in particle approximation of gradients and the Laplace operator. Based on this, we have designed a volume reconstruction SPH method to handle situations where particles are irregularly distributed. Compared to the traditional FPM method, the new method achieves higher accuracy in particle approximation of gradients and the Laplace operator by one order with similar CPU time consumption.
Furthermore, we have proven that the method satisfies the discrete maximum principle. Based on this, we have provided an error analysis for the variable coefficient Poisson equation, achieving second-order accuracy. This further enhances the theoretical analysis framework of the SPH method. 

Building upon its demonstrated superiority in high-order operator approximation under irregular particle distributions, the VRSPH method presents a promising approach for high-resolution simulations, such as solid deformation and fluid cavitation. More significantly, by formally analyzing the regularity condition—a sufficient condition for maintaining high-order accuracy—this work also provides a theoretical foundation for developing efficient adaptive SPH algorithms.


\bibliographystyle{unsrt}
\bibliography{BIB}

\begin{thebibliography}{10}

\bibitem{lucy1977numerical}
L.~B. Lucy.
\newblock A numerical approach to the testing of the fission hypothesis.
\newblock {\em Astron. J}, 82:1013--1024, 1977.

\bibitem{gingold1977smoothed}
R.~Gingold and J.~J. Monaghan.
\newblock {S}moothed {P}article {H}ydrodynamics: theory and application to
  non-spherical stars.
\newblock {\em Monthly Notices Roy. Astronom. Soc.}, 181(3):375--389, 1977.

\bibitem{lind2020review}
S.~Lind, B.~Rogers, and P.~K. Stansby.
\newblock Review of {S}moothed {P}article {H}ydrodynamics: towards converged
  {L}agrangian flow modelling.
\newblock {\em Proc. A.}, 476(2241):20190801, 2020.

\bibitem{lind2016incompressible}
S.~Lind, P.~Stansby, and B.~Rogers.
\newblock Incompressible--compressible flows with a transient discontinuous
  interface using smoothed particle hydrodynamics (sph).
\newblock {\em J. Comput. Phys.}, 309:129--147, 2016.

\bibitem{meng2021advances}
S.~Meng, L.~Taddei, N.~Lebaal, and S.~Roth.
\newblock Advances in ballistic penetrating impact simulations on thin
  structures using {S}mooth {P}articles {H}ydrodynamics: {A} state of the art.
\newblock {\em Thin Wall. Struct.}, 159:107206, 2021.

\bibitem{feng2023energy}
X.~Feng, Z.~Qiao, S.~Sun, and X.~Wang.
\newblock An energy-stable {S}moothed {P}article {H}ydrodynamics discretization
  of the {N}avier-{S}tokes-{C}ahn-{H}illiard model for incompressible two-phase
  flows.
\newblock {\em J. Comput. Phys.}, 479:111997, 2023.

\bibitem{zhu2022energy}
X.~Zhu, S.~Sun, and J.~Kou.
\newblock An energy stable {SPH} method for incompressible fluid flow.
\newblock {\em Adv. Appl. Math. Mech.}, 14(5):1201--1224, 2022.

\bibitem{chow2018incompressible}
A.~Chow, B.~Rogers, S.~Lind, and P.~Stansby.
\newblock Incompressible {SPH} ({ISPH}) with fast poisson solver on a {GPU}.
\newblock {\em Comput. Phys. Commun.}, 226:81--103, 2018.

\bibitem{quinlan2006truncation}
N.~Quinlan, M.~Basa, and M.~Lastiwka.
\newblock Truncation error in mesh-free particle methods.
\newblock {\em Internat. J. Numer. Methods Engrg.}, 66(13):2064--2085, 2006.

\bibitem{lind2016high}
S.~Lind and P.~Stansby.
\newblock High-order eulerian incompressible smoothed particle hydrodynamics
  with transition to lagrangian free-surface motion.
\newblock {\em J. Comput. Phys.}, 326:290--311, 2016.

\bibitem{vacondio2021grand}
R.~Vacondio, C.~Altomare, M.~De~Leffe, X.~Hu, D.~Le~Touz{\'e}, S.~Lind, J.-C.
  Marongiu, S.~Marrone, B.~Rogers, and A.~Souto-Iglesias.
\newblock Grand challenges for {S}moothed {P}article {H}ydrodynamics numerical
  schemes.
\newblock {\em Comp. Part. Mech.}, 8:575--588, 2021.

\bibitem{LITVINOV2015394}
S.~Litvinov, X.~Hu, and N.~Adams.
\newblock Towards consistence and convergence of conservative {SPH}
  approximations.
\newblock {\em J. Comput. Phys.}, 301:394--401, 2015.

\bibitem{wang2024fourth}
Z.~Wang, B.~Zhang, O.~Haidn, and X.~Hu.
\newblock A fourth-order kernel for improving numerical accuracy and stability
  in eulerian sph for fluids and total lagrangian sph for solids.
\newblock {\em J. Comput. Phys.}, 519:113385, 2024.

\bibitem{liu2010smoothed}
M.~Liu and G.~Liu.
\newblock {S}moothed {P}article {H}ydrodynamics ({SPH}): {A}n overview and
  recent developments.
\newblock {\em Arch. Comput. Methods Eng.}, 17:25--76, 2010.

\bibitem{lee2008comparisons}
E.-S. Lee, C.~Moulinec, R.~Xu, D.~Violeau, D.~Laurence, and P.~Stansby.
\newblock Comparisons of weakly compressible and truly incompressible
  algorithms for the {SPH} mesh free particle method.
\newblock {\em J. Comput. Phys.}, 227(18):8417--8436, 2008.

\bibitem{bagheri2024review}
M.~Bagheri, M.~Mohammadi, and M.~Riazi.
\newblock A review of {S}moothed {P}article {H}ydrodynamics.
\newblock {\em Comp. Part. Mech.}, 11(3):1163--1219, 2024.

\bibitem{chen_corrective_1999}
J.~Chen, J.~Beraun, and T.~C. Carney.
\newblock A corrective smoothed particle method for boundary value problems in
  heat conduction.
\newblock {\em Internat. J. Numer. Methods Engrg.}, 46(2):231--252, 1999.

\bibitem{liu_restoring_2006}
M.~Liu and G.~Liu.
\newblock Restoring particle consistency in {S}moothed {P}article
  {H}ydrodynamics.
\newblock {\em Appl. Numer. Math.}, 56(1):19--36, 2006.

\bibitem{liu_modeling_2005}
M.B. Liu, W.P. Xie, and G.R. Liu.
\newblock Modeling incompressible flows using a {F}inite {P}article {M}ethod.
\newblock {\em Appl. Math. Modelling}, 29(12):1252--1270, 2005.

\bibitem{zhang2018meshfree}
Z.~Zhang, K.~Walayat, J.~Chang, and M.~Liu.
\newblock Meshfree modeling of a fluid-particle two-phase flow with an improved
  {SPH} method.
\newblock {\em Internat. J. Numer. Methods Engrg.}, 116(8):530--569, 2018.

\bibitem{zhang2018decoupled}
Z.~Zhang and M.~Liu.
\newblock A decoupled {F}inite {P}article {M}ethod for modeling incompressible
  flows with free surfaces.
\newblock {\em Appl. Math. Modelling}, 60:606--633, 2018.

\bibitem{nasar2021high-2}
A.~Nasar, G.~Fourtakas, S.~Lind, J.~King, B.~Rogers, and P.~Stansby.
\newblock High-order consistent sph with the pressure projection method in 2-d
  and 3-d.
\newblock {\em J. Comput. Phys.}, 444:110563, 2021.

\bibitem{zhang2025towards}
B.~Zhang, N.~Adams, and X.~Hu.
\newblock Towards high-order consistency and convergence of conservative sph
  approximations.
\newblock {\em Comput. Methods Appl. Mech. Eng.}, 433:117484, 2025.

\bibitem{yang_improved_2024}
Y.~Yang, Y.~Li, and F.~Xu.
\newblock An improved algorithm for {Finite} {Particle} {Method} considering
  {Lagrange}-type remainder.
\newblock {\em Eng. Anal. Bound. Elem.}, 164:105754, 2024.

\bibitem{fatehi2011error}
R.~Fatehi and M.~Manzari.
\newblock Error estimation in {S}moothed {P}article {H}ydrodynamics and a new
  scheme for second derivatives.
\newblock {\em Comput. Math. Appl.}, 61(2):482--498, 2011.

\bibitem{lian2021general}
Y.~Lian, H.~Bui, G.~Nguyen, H.~Tran, and A.~Haque.
\newblock A general {SPH} framework for transient seepage flows through
  unsaturated porous media considering anisotropic diffusion.
\newblock {\em Comput. Methods Appl. Mech. Engrg.}, 387:114169, 2021.

\bibitem{ben2000convergence}
B.~Ben~Moussa and J.~P. Vila.
\newblock Convergence of {SPH} method for scalar nonlinear conservation laws.
\newblock {\em SIAM J. Numer. Anal.}, 37(3):863--887, 2000.

\bibitem{du2020mathematics}
Q.~Du and X.~Tian.
\newblock Mathematics of {S}moothed {P}article {H}ydrodynamics: {A} study via
  nonlocal {S}tokes equations.
\newblock {\em Found. Comput. Math.}, 20(4):801--826, 2020.

\bibitem{lee2019asymptotically}
H.~Lee and Q.~Du.
\newblock Asymptotically compatible {SPH}-like particle discretizations of one
  dimensional linear advection models.
\newblock {\em SIAM J. Numer. Anal.}, 57(1):127--147, 2019.

\bibitem{nasar2021high}
A.~Nasar, G.~Fourtakas, S.~Lind, B.~Rogers, P.~Stansby, and J.~King.
\newblock High-order velocity and pressure wall boundary conditions in eulerian
  incompressible sph.
\newblock {\em J. Comput. Phys.}, 434:109793, 2021.

\bibitem{min2006supra}
C.~Min, F.~Gibou, and H.~Ceniceros.
\newblock A supra-convergent finite difference scheme for the variable
  coefficient {P}oisson equation on non-graded grids.
\newblock {\em J. Comput. Phys.}, 218(1):123--140, 2006.

\bibitem{morris1997modeling}
J.~Morris, P.~Fox, and Y.~Zhu.
\newblock Modeling low reynolds number incompressible flows using sph.
\newblock {\em J. Comput. Phys.}, 136(1):214--226, 1997.

\bibitem{imoto2020truncation}
Y.~Imoto.
\newblock Truncation error estimates of approximate operators in a generalized
  particle method.
\newblock {\em Jpn. J. Ind. Appl. Math.}, 37(2):565--598, 2020.

\bibitem{ju2021maximum}
L.~Ju, X.~Li, Z.~Qiao, and J.~Yang.
\newblock Maximum bound principle preserving integrating factor
  {R}unge--{K}utta methods for semilinear parabolic equations.
\newblock {\em J. Comput. Phys.}, 439:110405, 2021.

\bibitem{Matlab}
Matlab.
\newblock Partial differential equation toolbox.
\newblock {\em The MathWorks Inc}, 1996.

\end{thebibliography}
\end{document}